\newtheorem{thm}{Theorem}[section]
\newtheorem{lem}[thm]{Lemma}
\newtheorem{prop}[thm]{Proposition}
\theoremstyle{definition}
\theoremstyle{remark}
\numberwithin{equation}{section}
\newcommand{\R}{\mathbb R}
\newcommand{\e}{\textit{\textbf{e}}}
\newcommand{\ep}{\eps}
\newcommand{\eps}{\varepsilon}
\newcommand{\p}{\partial}
\newcommand{\comment}[1]{}
\begin{document}

\title{Minimizers of convex functionals arising in random surfaces}%
\author{D. De Silva}
\address{Department of Mathematics, Barnard College, Columbia University, New York, NY 10027}
\email{\tt  desilva@math.columbia.edu}
\author{O. Savin}%
\address{Department of Mathematics, Columbia University, New York, NY 10027 }%
\email{\tt  savin@math.columbia.edu}%
\footnote{The second author was supported by N.S.F. Grant
DMS-07-01037 and a Sloan Fellowship.}


\begin{abstract} We investigate $C^1$ regularity of minimizers to
$\int F(\nabla u)dx$ in two dimensions for certain classes of
non-smooth convex functionals $F$. In particular our results apply
to the surface tensions that appear in recent works on random
surfaces and random tilings of Kenyon, Okounkov and others.
\end{abstract}
\maketitle

\section{Introduction}

The classical problem in the calculus of variations consists in
minimizing the functional $$\min_{\mathcal A}\int_\Omega F(\nabla
u)dx, \quad \quad \Omega\subset \mathbb{R}^n$$ where $F:\mathbb R
^n \to \mathbb R$ is a given convex function and $\mathcal A$ is a
family of admissible functions $u:\Omega \to \mathbb{R}$
(typically the elements of a functional space satisfying a
boundary condition $u=\varphi$ on $\partial \Omega$). In general
it is not hard to show the existence and uniqueness for this
problem, but the main difficulty arises when trying to prove
further smoothness properties of the minimizers. For example, one
would like to show that a minimizer has continuous second
derivatives. Then it satisfies the Euler-Lagrange equation
\begin{equation}{\label{EL`}}
div(\nabla F(\nabla u))=F_{ij}(\nabla u)u_{ij}=0.
\end{equation}

Hilbert's 19th problem posed at the beginning of the last century
refers precisely to this question of regularity: {\it Are the
solutions of regular problems in the calculus of variations always
necessarily analytic?}

In two dimensions the problem was solved in 1943 by C.B. Morrey
\cite{M} who showed analyticity of solutions by the use of complex
analysis and quasiconformal mappings. A partial answer to
Hilbert's problem in the general case was given in the 1930s by
the use of Schauder's estimates for linear equations
\cite{Sc1},\cite{Sc2}, which guarantee that minimizers are smooth
once they have H\"older continuous derivatives. On the other hand
it follows from the comparison principle that minimizers are
Lipschitz continuous under quite general conditions on the domain
and boundary data. One piece of the puzzle remained to be proved:
that Lipschitz minimizers are in fact $C^{1, \alpha}$.

The breakthrough came in the 1950s from the work of E. De Giorgi
\cite{D} on the minimal surface equation and independently from J.
Nash \cite{N}. Differentiating the equation (\ref{EL`}) with
respect to $x_k$ we see that the derivative $u_k$ satisfies the
nonlinear elliptic PDE
\begin{equation}{\label{1`}}
\partial_i(F_{ij}(\nabla u)u_{kj})=0.
\end{equation}
Ignoring the dependence on $\nabla u$ we can write that $v=u_k$ satisfies the linear equation
$$\partial_i(a_{ij}(x)v_{j})=0.$$
De Giorgi observed that the uniform ellipticity condition
$$\lambda I \le [a_{ij}(x)]_{i,j} \le \Lambda I$$
on the coefficients is sufficient to obtain continuity for $v$ (in
fact H\"older continuity due to scaling). The proof is based on an
iteration scheme for various Caccioppoli type inequalities of the
form
\begin{equation}{\label{cacc}}
\int_\Omega |\nabla v|^2\eta^2dx \le C(\Lambda, \lambda)
\int_\Omega v^2|\nabla \eta|^2dx, \quad \quad \quad \forall \eta
\in C_0^\infty(\Omega).
\end{equation}
This result led to extensive study of the theory of linear second
order elliptic equations with measurable coefficients (see for
example \cite{CFMS},  \cite{GT}, \cite{LSW}).

With the question of regularity being understood in the case when
$F$ is smooth and strictly uniformly convex, it is natural to
further investigate what happens when the uniform ellipticity
condition on $D^2F$ fails on a certain set. In the particular case
of the $p$-Laplace equation, i.e $F(\xi)=|\xi|^p$, $1 < p <
\infty$, when the degeneracy set consists of only one point (the
origin)  the regularity of minimizers is well-understood (see fore
example \cite{E}, \cite{U}).

In this paper we direct our attention to the case when the
degeneracy set of $F$ can possibly be very large. The first step
towards proving regularity of minimizers would be obtaining $C^1$
continuity. Then the Euler Lagrange equation becomes less
ambiguous and the classical theory can be applied near the points
whose gradients lie outside the degeneracy set of $F$.

If $F^*$ denotes the Legendre transform of $F$ then
$$F_{ij}(\nabla F^*)F^*_{ij}=tr((D^2F^*)^{-1}D^2F^*)=n,$$
thus $F^*$ is a particular solution of the nonlinear PDE
(\ref{1`}). Without further evidence we can ask wether or not
minimizers have the same regularity as $F^*$. In particular: {\it
Is it true that if $F$ is strictly convex then Lipschitz
minimizers are of class $C^1$?} This seems to be a difficult
question but there is some evidence that suggests that the result
might be true at least in two dimensions.

In the first part of the paper we prove {\it a priori} estimates
that answer the question above in two dimensions for two large
classes of strictly convex functionals. Our first theorem shows
that if $\mathbb R^2$ can be covered by two open sets $O_\lambda$,
$V_\Lambda$,
\begin{align*}
O_\lambda& \subset \{p \in \R^2, D^2F(p) > \lambda I\},\\
V_\Lambda& \subset \{p \in \R^2, D^2F(p) < \Lambda I\},
\end{align*}
then $\nabla u$ has a uniform modulus of continuity in the interior of $\Omega$.

\begin{thm}\label{Main} Let $u$ be a minimizer with
$\|\nabla u\|_{L^\infty(B_1)} \leq M.$ Assume that
$$\overline{\mathcal{B}}_M \subset O_\lambda \cup V_\Lambda.$$ Then in
$B_{1/2}$, $\nabla u$ has a uniform modulus of continuity
depending on the modulus of convexity $\omega_F$ of $F$,
$O_\lambda, V_\Lambda, M$ and $ \|\nabla
F\|_{L^{\infty}(\mathcal{B}_{M})}.$
\end{thm}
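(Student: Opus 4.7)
My plan is a compactness and blow-up argument that reduces the theorem to a rigidity statement about global minimizers on $\mathbb R^2$. The covering hypothesis $\overline{\mathcal B}_M \subset O_\lambda \cup V_\Lambda$ means that at every admissible gradient value the functional is either uniformly elliptic (in $O_\lambda$) or has an upper bound on its Hessian (in $V_\Lambda$), so the delicate region is the possibly degenerate set $V_\Lambda \setminus O_\lambda$. On the non-degenerate region, classical De Giorgi--Nash theory applies to the linearized equation $\partial_i(F_{ij}(\nabla u)\partial_j u_k)=0$, but the dimension-two hypothesis together with the strict convexity of $F$ will be needed to propagate continuity across the degenerate region.

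For the compactness step, suppose the theorem fails. Then there exist $F_k$ satisfying all hypotheses with the same parameters, minimizers $u_k$ on $B_1$, and points $x_k,y_k\in B_{1/2}$ with $r_k:=|x_k-y_k|\to 0$ but $|\nabla u_k(x_k)-\nabla u_k(y_k)|\ge \eta_0 > 0$ (reading $\nabla u_k$ at Lebesgue points, or after smoothing $F_k$ slightly). The rescalings $v_k(y):=(u_k(x_k+r_k y)-u_k(x_k))/r_k$ are minimizers of $\int F_k(\nabla v)\,dy$ on balls that exhaust $\mathbb R^2$, still Lipschitz with constant $M$. Extracting subsequences, Arzelà--Ascoli gives $v_k\to v$ locally uniformly, and the uniform modulus of convexity together with the bound on $\|\nabla F_k\|_{L^\infty(\mathcal B_M)}$ yields $F_k\to F_\infty$ in $C^0_{\mathrm{loc}}(\overline{\mathcal B}_M)$; the limit $F_\infty$ is strictly convex with the same modulus $\omega_F$ and still satisfies the covering hypothesis. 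Lower semicontinuity then gives that $v$ is a global minimizer of $\int F_\infty(\nabla v)\,dy$ on $\mathbb R^2$ with $\|\nabla v\|_\infty\le M$.

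The heart of the argument is the rigidity claim: any such global minimizer has a continuous gradient with a uniform modulus depending only on the stated data. On the open set $\{\nabla v\in O_\lambda\}$ this follows from De Giorgi--Nash applied to $\partial_k v$. To propagate continuity across the degenerate set, I would exploit the two-dimensional structure via a conjugate potential $w$ defined by $\partial_1 w=-\partial_2 F_\infty(\nabla v)$, $\partial_2 w=\partial_1 F_\infty(\nabla v)$ (available since $\nabla F_\infty(\nabla v)$ is divergence-free). The map $\Phi:=(\nabla v,\nabla w)$ relates $F_\infty$ to its Legendre transform and is quasiconformal where $D^2 F_\infty$ is non-degenerate. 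Strict convexity quantified by $\omega_F$ then forbids $\nabla v$ from jumping across a line segment in the target, since $F_\infty$ would have to be affine on that segment; combined with the continuity already obtained on $\{\nabla v\in O_\lambda\}$, this forces $\nabla v$ to be continuous globally, contradicting the persistence of the gap $\eta_0$ in the limit.

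The main obstacle is precisely this propagation across the degenerate region, since weak convergence of gradients does not automatically preserve pointwise oscillation of $\nabla u_k$. A robust formulation is needed, ideally a Caccioppoli-type inequality that combines the coercivity from $O_\lambda$ on part of the domain with the Hessian upper bound from $V_\Lambda$ on the rest, together with a two-dimensional comparison argument (in the spirit of Morrey) that converts the geometric obstruction provided by strict convexity into a quantitative decay of the oscillation of $\nabla u$. This is where I expect the bulk of the technical work to lie, and the place where the restriction to $n=2$ appears essential.
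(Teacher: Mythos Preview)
Your blow-up scheme has a structural gap that is hard to close: after rescaling, you only know $v_k\to v$ locally uniformly, and uniform convergence does not carry the pointwise gradient oscillation $|\nabla u_k(x_k)-\nabla u_k(y_k)|\ge\eta_0$ to the limit. Even if you succeed in proving that the global minimizer $v$ is $C^1$ (or linear), nothing prevents $\nabla v_k$ from oscillating wildly while $v_k$ converges uniformly to a smooth function; this is exactly the phenomenon you are trying to rule out. To make a compactness argument close you would need strong (or locally uniform) convergence of $\nabla v_k$, and obtaining that is essentially equivalent to the theorem itself. A secondary issue: you pass $F_k\to F_\infty$ in $C^0$, but the conditions $D^2F>\lambda I$ on $O_\lambda$ and $D^2F<\Lambda I$ on $V_\Lambda$ are not stable under $C^0$ convergence, so the limiting functional need not satisfy the covering hypothesis you want to invoke in the rigidity step.

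The paper bypasses blow-up entirely and argues directly. It proves two Caccioppoli-type inequalities: when a strip $S_{\mathbf e}(c_0,c_1)\cap\nabla u(B_1)\subset O_\lambda$, one obtains $\int_{B_{1/2}}|\nabla(G(u_{\mathbf e}))|^2\,dx\le C$ for a cutoff $G$ of $u_{\mathbf e}$ between $c_0$ and $c_1$; when a half-plane $H^+_{\mathbf e}(\tilde c-\varepsilon)\cap\nabla u(B_1)\subset V_\Lambda$, one obtains $\int_{B_{1/2}\cap U}|\nabla(\nabla F(\nabla u))|^2\,dx\le C$. Each inequality feeds a two-dimensional \emph{localization lemma}: if $\nabla u$ hits both half-spaces near the origin, then by the maximum principle it hits both on every circle $\partial B_r$, and integrating the relevant quantity along arcs for $\delta\le r\le 1/2$ produces a lower bound of order $\log(1/\delta)$, contradicting the finite Caccioppoli bound once $\delta$ is small. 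The theorem is then finished by a finite iteration: inscribe a regular polygon with side length $\sim\varepsilon$ around $\mathcal B_M$, apply the appropriate localization lemma across each side (choosing which one according to whether the adjacent triangle lies in $O_\lambda$ or in $V_\Lambda$), and shrink the polygon until its diameter is below $\varepsilon$. Your closing paragraph in fact gestures at exactly these ingredients; the point is that they yield the modulus of continuity directly, and the compactness wrapper only adds difficulties rather than removing them.
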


In the second theorem we show that if $D^2 F>0$ except at a finite number of points then the same result holds.

\begin{thm}\label{Main2} Let $u$ be a minimizer with
$\|\nabla u\|_{L^\infty(B_1)} \leq M.$ Assume that $\overline
{\mathcal{B}}_M \setminus \bigcup O_{1/n}$ is a finite set. Then
in $B_{1/2}$, $\nabla u$ has a uniform modulus of continuity
depending on $O_{1/n}, M$ and $ \|\nabla
F\|_{L^{\infty}(\mathcal{B}_{M})}.$
\end{thm}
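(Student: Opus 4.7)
The natural approach is a compactness and contradiction argument that reduces Theorem \ref{Main2} to Theorem \ref{Main}, treating the finite set of degeneracy points of $D^2F$ separately. Set $\{p_1,\dots,p_k\}:=\overline{\mathcal{B}}_M\setminus\bigcup_n O_{1/n}$. Since the open sets $O_{1/n}$ form an increasing chain exhausting $\overline{\mathcal{B}}_M\setminus\{p_1,\dots,p_k\}$, compactness gives that for every $\delta>0$ there is $n_0=n_0(\delta)$ with $\overline{\mathcal{B}}_M\setminus \bigcup_j B_\delta(p_j)\subset O_{1/n_0}$; quantitatively, $D^2F\geq \tfrac{1}{n_0}I$ outside any fixed neighborhood of the bad set. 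Combined with the hypothesis $\|\nabla F\|_{L^\infty(\mathcal{B}_M)}\le C$, which yields a local upper bound on $D^2F$ wherever the gradient is bounded (as a standard property of convex functions with controlled gradient), this recovers both covering conditions of Theorem \ref{Main} on any region where $\nabla u$ stays a definite distance from $\{p_1,\dots,p_k\}$.

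Suppose by contradiction the theorem fails, and take a sequence of minimizers $u^{(m)}$ with $\|\nabla u^{(m)}\|_{L^\infty(B_1)}\leq M$ and points $x_m,y_m\in B_{1/2}$ with $|x_m-y_m|\to 0$ but $|\nabla u^{(m)}(x_m)-\nabla u^{(m)}(y_m)|\geq \eta>0$. After extracting a subsequence, $u^{(m)}$ converges uniformly to a Lipschitz minimizer $v$ on $\overline{B}_{1/2}$; rescaling at the scale of the oscillation, one may arrange that $\nabla v$ itself has a genuine jump discontinuity at some interior point $x_0$ with two distinct one-sided limit values $p^+,p^-\in\overline{\mathcal{B}}_M$. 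Applying Theorem \ref{Main} in any neighborhood where $\nabla v$ avoids a $\delta$-neighborhood of $\{p_1,\dots,p_k\}$ forces $\nabla v$ to be continuous there. Hence the jump values must both lie in the bad set: $p^\pm=p_{j_\pm}$ for some indices $j_\pm$.

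Finally one rules out this configuration. Near $x_0$ the gradient of $v$ takes only the two distinct values $p_{j_+},p_{j_-}$, so $v$ is piecewise affine with an interface separating two slopes. Because the segment joining $p_{j_+}$ and $p_{j_-}$ crosses the strictly convex region of $F$ (only the endpoints are allowed to be degenerate), a small smoothing of $v$ across the interface strictly lowers $\int F(\nabla v)\,dx$ by Jensen's inequality, contradicting minimality. The principal obstacle is the blow-up step: producing a clean limit in which the discontinuity is isolated and certifying that the one-sided gradient limits must lie in the finite bad set. This requires the estimate of Theorem \ref{Main} to be applied with constants that remain controlled as the gradient approaches a $p_j$, which is precisely what the quantitative covering by the $O_{1/n}$ provides.
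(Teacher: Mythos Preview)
Your proposal has a genuine gap at the very first step. The claim that a bound on $\|\nabla F\|_{L^\infty(\mathcal{B}_M)}$ yields a local upper bound on $D^2F$ is false: a smooth convex function can have bounded gradient and unbounded Hessian (take $F(p)=|p|^{3/2}$ near the origin). Theorem~\ref{Main2} makes no assumption whatsoever on upper bounds for $D^2F$, so you cannot manufacture a set $V_\Lambda$ covering neighborhoods of the $p_j$, and hence cannot invoke Theorem~\ref{Main}. This is not a technicality: the whole point of Theorem~\ref{Main2} is to handle the case where $D^2F$ degenerates from below at the $p_j$ while possibly blowing up from above nearby.

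Even if the covering were available, the contradiction loop does not close. The modulus of continuity produced by Theorem~\ref{Main} depends on $\lambda$, and as the gradient approaches some $p_j$ the required $\lambda=1/n_0$ tends to zero, so the estimates degenerate precisely where you need them; your last paragraph asserts the opposite. The blow-up step is also unjustified: passing to a uniform limit of a contradicting sequence does not produce a function with a clean two-valued jump, and there is no mechanism forcing $\nabla v$ to take only the values $p_{j_\pm}$ near the putative discontinuity.

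The paper proceeds by a direct and quite different route. It proves a Caccioppoli inequality (Proposition~\ref{Cacc3}) bounding $\int_{B_{1/2}\cap(\nabla u)^{-1}(O_{1/n})}|D^2u|^2\,dx$ using only the lower bound $D^2F\ge \tfrac{1}{n}I$ on $O_{1/n}$ and the bound on $\nabla F$; the key algebraic input is the two-dimensional identity $(D^2F\,D^2u)^2=-\det(D^2F\,D^2u)\,I$, valid because the Euler--Lagrange equation gives $\mathrm{Tr}(D^2F\,D^2u)=0$. This, together with a preliminary lemma (Lemma~\ref{Onprel}) showing that the image of $\nabla u$ cannot develop ``holes'' in the nondegenerate region, yields a localization lemma (Lemma~\ref{On}) phrased in terms of balls in gradient space rather than strips. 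An iteration over a finite covering by such balls then traps $\nabla u(B_\delta)$ either in a small ball or in a small neighborhood of one of the $p_j$.
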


Our interest in this two dimensional regularity problem is motivated by a series of recent papers in combinatorics and statistical mechanics about random tilings and random surfaces by Cohn, Kenyon, Okounkov, Propp, Sheffield and others.
Let us briefly explain the connection between our problem and these results.

We present the simplest model of a random surface. \noindent For
small $\varepsilon$ consider the points $\varepsilon \mathbb{Z}^3
\subset \mathbb{R}^3$, and call the $(1,1,1)$ direction vertical
and the plane $P=\{x+y+z=0\}$ horizontal. An {\it
$\varepsilon$-stepped surface} is a polygonal surface whose faces
are squares in the 2-skeleton of $\varepsilon \mathbb{Z}^3$ and
which is a graph in the vertical direction. In other words, the
subgraph of an $\varepsilon$-stepped surface is a collection of
$\varepsilon$-cubes of $\varepsilon \mathbb{Z}^3$.

Obviously, stepped surfaces can approximate only Lipschitz graphs with
gradients lying in the equilateral triangle $N$ with vertices given by the slopes of
the three planes $\{x=0\}$, $\{y=0\}$, $\{z=0\}$.

Consider the ``random surfaces" given by all $\varepsilon$-
stepped surfaces above a domain $\Omega \subset P$ which stay
$O(\varepsilon)$ away from a given function $\varphi$ defined on
$\partial \Omega$. In connection with problems that arise in
statistical mechanics, it is interesting to investigate the
limiting behavior as $\varepsilon \to 0$. Cohn, Kenyon and Propp
\cite{CKP} proved that as $\varepsilon \to 0$ the random surfaces
almost surely converge uniformly to a nonrandom function $u$ which
solves the variational problem
\begin{equation}{\label{100}}
\min_u\int_\Omega F(\nabla u)dx, \quad \nabla u \in \overline N, \quad
\mbox{$u=\varphi$ on $\partial \Omega$.}
\end{equation}
The surface tension $F:\overline N\to \mathbb{R}$ can be computed
from combinatorial considerations and is strictly convex in the
interior of $N$ and constant on $\partial N$.

The picture of such a random surface is shown in Figure 1 and it
appeared on the cover of the Notices of the AMS, March 2005,
Volume 52.

\begin{figure}
\centering \scalebox{0.4}{
        \epsfig{file=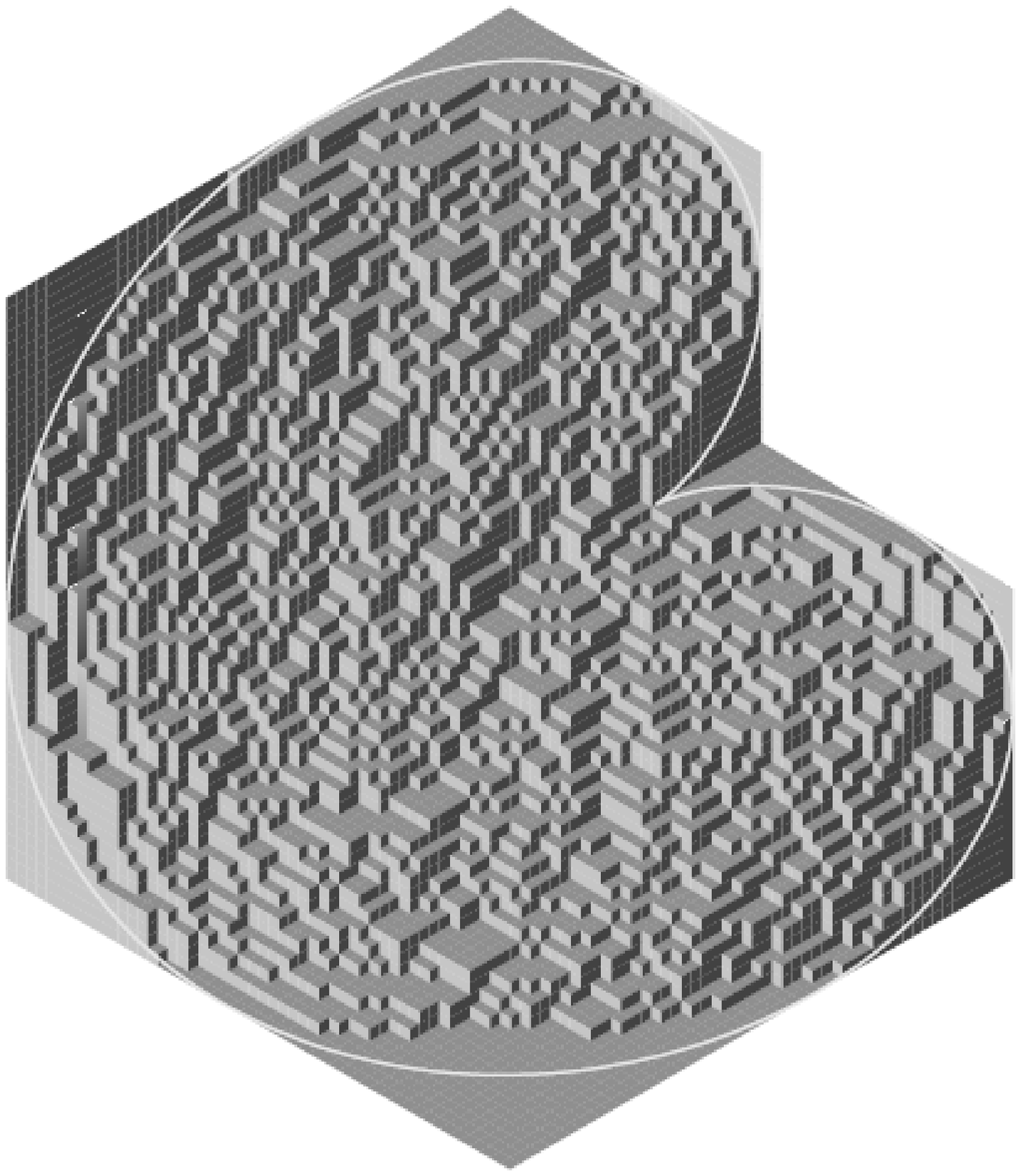}
} \caption{ \ } \label{fig1}
\end{figure}

Other types of random surfaces can be obtained from random tiling
of the plane with various geometric figures. For example there is
a one to one correspondence between the tilings with $60^o$ rhombi
of an $\varepsilon$- honeycomb lattice and the
$\sqrt{3}\varepsilon$- stepped surfaces: the tilings are the
projections of the faces (squares) along the vertical direction
onto the plane $P$.


More generally, one can construct random surfaces corresponding to
random perfect matchings (also called {\it dimer configurations})
on a weighted, bipartite, periodic, planar graph $G$. Kenyon,
Okounkov and Sheffield \cite{KOS} showed that these random
surfaces converge {\it a.s.} to a solution of (\ref{100}) with $F$
and the polygon $N$ (called the Newton polygon) depending on $G$.
The function $F$ can be explicitly computed and minimizers of
\eqref{100} for this $F$ and for a special
class of domains and boundary
data were studied by Kenyon and Okounkov \cite{KO} using techniques from
algebraic geometry.

In particular $F$ is piecewise linear on $\partial N$, strictly
convex in the interior, smooth except at a finite number of points
where it can have singularities.

In the second part of this paper we investigate precisely the
$C^1$ regularity of minimizers of (\ref{100}) for an arbitrary 
functional $F$ which is, in the interior of a polygon $N$, strictly
convex and smooth except at a finite number of points, but without
any further assumptions about the behavior of $F$ on $\partial N$.
In this case the set of degeneracy of $D^2F$ is the union of a
finite set with $\partial N$.

 The variational
problem (\ref{100}) is equivalent to an obstacle problem. Indeed,
let $\overline \varphi$ ($\underline \varphi$) be the minimum
(maximum) of all admissible functions $u$ and define $F$ to be
$\infty$ outside $\overline N$, then problem (\ref{100}) is
equivalent to

$$ \min_{\underline \varphi \le u \le \overline \varphi}\int_\Omega F(\nabla u)dx.$$
 The obstacle problem has been extensively studied for the Laplace equation i.e $F(\xi)=|\xi|^2$ (see for example \cite{C}).

 The regularity of minimizers in our case is quite delicate. Near the points where
the solution separates from the obstacle the equation becomes
degenerate. Moreover, even in the set where $\underline
\varphi<u<\overline \varphi$ it cannot be concluded (as in Laplace
equation case) that $\nabla u$ is in the interior of $N$. Thus the
results from the first part of the paper cannot be directly
applied in any reasonable set.

 Our main regularity result says that the minimizers
are $C^1$ in $\Omega$ except on a number of segments which have an
end point on $\partial \Omega$ and have directions perpendicular
to the sides of $N$. On these segments the minimizer coincides
with either the lower or the upper obstacle.

\begin{thm}\label{C1} If $\nabla u$ is discontinuous at $x_0 \in  \Omega$,
then there exists a direction $\nu_i$ perpendicular to one of the
sides $[p_i,p_{i+1}]$ of $N$ such that $u$ is linear on a segment of
direction $\nu_i$ connecting $x_0$ and $\partial \Omega$.
Precisely, there exists $x_1=x_0 + t_1 \nu_i \in \partial \Omega$
such that
$$u(x) = u(x_0)+ p_i \cdot (x-x_0), \quad \text{for all} \ x \in [x_0,x_1] \subset \overline{\Omega}.$$
In particular, $u \in C^1$ away from the obstacles.\end{thm}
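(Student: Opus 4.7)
The plan is to combine the interior regularity of Theorem \ref{Main2} with the piecewise linear structure of $F$ on $\p N$. For $x_0\in\Omega$ let $S(x_0)\subset\overline N$ denote the set of cluster values of $\nabla u$ at $x_0$; since $u$ is Lipschitz with $\nabla u\in\overline N$ a.e., $S(x_0)$ is compact and $\nabla u$ is continuous at $x_0$ iff $S(x_0)$ is a single point. If $\nabla u$ is discontinuous at $x_0$ but $S(x_0)$ lies in a compact $K\subset\mathrm{int}\,N$, then on a small ball around $x_0$ the gradient stays in $K$, where $D^2F$ is positive except at finitely many points; Theorem \ref{Main2} applied there (after a harmless extension of $F$) yields a uniform modulus of continuity, contradicting the discontinuity. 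Hence $S(x_0)\cap\p N\neq\emptyset$, and any $q$ in this intersection lies on some side $[p_i,p_{i+1}]$ (if $q$ is a vertex we make a choice of one of the two adjacent sides, and the argument that follows will show which choice is correct).

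Since $F$ is affine on $[p_i,p_{i+1}]$, whenever $\nabla u(x)\in[p_i,p_{i+1}]$ one has $\nabla u(x)-p_i\parallel(p_{i+1}-p_i)$, hence $\nu_i\cdot(\nabla u(x)-p_i)=0$; equivalently, $\p_{\nu_i}\tilde u(x)=0$, where $\tilde u(x):=u(x)-u(x_0)-p_i\cdot(x-x_0)$. Thus on every set where the gradient sits on the side, $\tilde u$ is constant along lines in direction $\nu_i$. The geometry of $N$ has singled out both the candidate rigid direction $\nu_i$ and the candidate slope $p_i$ purely from the cluster datum $q$, and the entire task becomes to show that $\tilde u\equiv 0$ on the segment $\{x_0+t\nu_i:0\le t\le t_1\}$ reaching $\p\Omega$.

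To do so, I consider the blow-up $u_r(x):=(u(x_0+rx)-u(x_0))/r$, which along a subsequence converges locally uniformly to a global minimizer $u_0$ with $\nabla u_0\in\overline N$, $u_0(0)=0$, $p_i\in S(0)$, and $S(0)$ not a singleton. Convexity of the cluster set together with Theorem \ref{Main2} applied in the half-planes where $\nabla u_0$ would otherwise remain in $\mathrm{int}\,N$ forces $S(0)\subset[p_i,p_{i+1}]$; the rigidity of the previous paragraph then forces $u_0(x)=p_i\cdot x$ on the whole line through $0$ in direction $\nu_i$. Undoing the blow-up, this propagates to $\tilde u=0$ along a short initial subsegment through $x_0$; iterating at interior points of the segment, each of which inherits a gradient discontinuity on approach from the non-flat side, shows that the set of $t$ with $\tilde u(x_0+t\nu_i)=0$ is both closed and relatively open in the portion of the ray lying in $\overline\Omega$, hence extends all the way to $\p\Omega$.

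The main obstacle is the blow-up rigidity step: the cluster datum at a single point is one-dimensional in character, yet the conclusion demands that $\nabla u_0$ be confined to the side $[p_i,p_{i+1}]$ on an entire half-strip. Establishing this confinement is the heart of the argument and requires a careful interplay between the convexity of $S$, the strict convexity of $F$ off $\p N$, and the linearization on $[p_i,p_{i+1}]$; once it is in place, the rest is essentially the slicing identity $\p_{\nu_i}\tilde u=0$ together with a continuation argument.
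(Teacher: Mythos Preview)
Your plan has the right opening observation---a discontinuity forces cluster values onto $\partial N$---but the core of the argument is missing, and the pieces you do sketch would not go through as written.

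\textbf{The blow-up step is the whole difficulty, and it is not addressed.} You concede that confining $\nabla u_0$ to a single side on a half-strip is ``the heart of the argument,'' but you offer no mechanism for it. Cluster sets are not convex in general, so ``convexity of the cluster set'' is not available; and Theorem~\ref{Main2} is an \emph{a priori} estimate for smooth minimizers of a strictly convex $F$, so invoking it on a Lipschitz limit $u_0$ whose gradient may touch $\partial N$ (where $D^2F$ is not controlled) is exactly the difficulty the paper spends Sections~5--6 circumventing via smooth approximations $u_m$. Even granting that $\nabla u_0$ lies on $[p_i,p_{i+1}]$ everywhere, your slicing identity $\partial_{\nu_i}\tilde u_0=0$ only says $u_0-p_i\cdot x$ is constant along each $\nu_i$-line; it does not single out $u_0(x)=p_i\cdot x$ on the line through $0$.

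\textbf{``Undoing the blow-up'' does not produce a positive-length segment.} Information about the blow-up limit is asymptotic as $r\to 0$; by itself it cannot force $\tilde u\equiv 0$ on a segment of definite length. Your open--closed continuation also needs that interior points of the putative segment inherit a gradient discontinuity, which you assert but do not prove; in fact the paper shows the segment reaches $\partial\Omega$ by an entirely different route.

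\textbf{How the paper actually proceeds.} The argument works with the smooth approximations $u_m$ and is essentially topological/geometric rather than blow-up based. The key input is Theorem~\ref{components}: for a non-linear solution of a 2D elliptic equation, the sub- and super-level sets of a tangent plane at a suitable point each have at least two noncompact components. This drives Lemma~\ref{R} and Lemma~\ref{defCp}, which show that the level sets $\{u=p\cdot x\}$ for $p$ on a side are trapped between a convex and a concave graph in the $\nu_i$ direction. Proposition~\ref{segments} then says: if, for every $p$ in a subinterval of a side and every small $r$, some $\nabla u_m(B_r)$ meets $\mathcal B_r(p)$, then $u$ is already linear on a segment from $x_0$ to $\partial\Omega$ (the contradiction comes from the flatness Propositions~\ref{flat1}--\ref{flat2}). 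The proof of Theorem~\ref{C1} is the contrapositive: if no such segment exists, Proposition~\ref{segments} manufactures, on every side, a ball $\mathcal B_\varepsilon(a_k)$ missed by $\nabla u_m(B_\varepsilon)$; feeding these ``holes'' into the localization Theorem~\ref{localization} and iterating forces $\mathrm{diam}\,\nabla u_m(B_\delta)$ to be small, hence $\nabla u$ is continuous at $x_0$.

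In short, the missing idea is the connected-components lemma (Theorem~\ref{components}) and the convex/concave structure of the sets $\{u=p\cdot x\}$ it yields; without a substitute for that, the rigidity you need at the blow-up level has no proof.
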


We also prove a type of continuity result at the points where the
minimizer is not differentiable. We show that if a sequence of
points converges to a point of non-differentiability then their
corresponding gradients approach $\partial N$ (see Theorem
\ref{continuous}).

As seen in Figure 1, minimizers of (\ref{100}) may have ``flat"
regions where they are linear and their gradient belongs to the
set of vertices of $N$. We show that these regions must lie
between a convex and a concave graph (see Theorem \ref{convex}).

The proof of our results relies on two dimensional techniques. For
the results in the first part we prove weak versions of
Caccioppoli inequalities in which the right hand side of
(\ref{cacc}) is replaced by a constant. This is done using the
precise form of the nonlinearity in equation (\ref{1`}). As it
turns out, such inequalities are sufficient for proving $C^1$
continuity. The second part of the paper is more involved. We use
the ideas of the first part and an approximation technique
together with the fact that any tangent plane splits the graph of
a smooth minimizer into at least four connected components. These
methods could be exploited further to include other examples of
functionals, however most examples would satisfy our assumptions.

 As in the work of De Giorgi
the key step in proving $C^1$ continuity consists in obtaining
``localization" lemmas (see Lemmas \ref{A1}, \ref{A2}, \ref{On},
and Theorem \ref{localization}). Roughly speaking, they say that
when we restrict to smaller and smaller domains the image of the
gradient restricts itself to either one of two known sets (that
decompose the plane).

The paper is organized as follows. In Section 2 and Section 3 we
prove respectively Theorem \ref{Main} and Theorem \ref{Main2}. In
Section 4 we present the degenerate obstacle problem and state the
main results. In Section 5 we introduce the approximation problem.
In Section 6 we prove Theorem \ref{C1}.  In Section 7 we prove
Theorem \ref{continuous}. Finally in the last section we prove two
flatness theorems Theorems \ref{Flat1} and \ref{Flat2} that are
used in the course of the proof of Theorem \ref{C1}.

\section{The proof of Theorem \ref{Main}.}

Let $F : \R^2 \rightarrow \R$ be a smooth strictly convex
function, with modulus of convexity $\omega_F: (0,+\infty)
\rightarrow (0,+\infty)$, i.e.

$$F(q) - F(p) - \nabla F(p) \cdot (q-p) \geq \omega_F(|q-p|), \quad q \neq p.$$

Let $0<\lambda \leq \Lambda,$ and denote by $O_\lambda, V_\Lambda$
two open sets such that
\begin{align}
O_\lambda& \subset \{p \in \R^2, D^2F(p) > \lambda I\},\\
V_\Lambda& \subset \{p \in \R^2, D^2F(p) < \Lambda I\}.
\end{align}

Throughout this paper balls (of radius $r$) in the $x$-variable
space are denoted by $B_r$, while balls in the $p$-variable space
(the gradient space), are denoted by $\mathcal{B}_r.$

Consider the integral functional
\begin{equation*}
I(u)= \int_{B_1} F(\nabla u)dx.
\end{equation*}
We prove the following a priori estimate.

\

\noindent \textbf{Theorem 1.1.} \textit{Let $u$ be a minimizer to
$I(u)$ with $\|\nabla u\|_{L^\infty(B_1)} \leq M.$ Assume that
$$\overline{\mathcal{B}}_M \subset O_\lambda \cup V_\Lambda.$$ Then in
$B_{1/2}$, $\nabla u$ has a uniform modulus of continuity
depending on $\omega_F, O_\lambda, V_\Lambda, M$ and $ \|\nabla
F\|_{L^{\infty}(\mathcal{B}_{M})}.$}

\subsection{Statement of the localization Lemmas.} The proof of Theorem
1.1 relies on the next two Lemmas. Let $\textit{\textbf{e}}$ be an
arbitrary unit vector (direction) in $\R^2$ and denote by
\begin{align*}
&H^+_\e(c):= \{p \in \R^2 | p \cdot \textit{\textbf{e}} > c\},\\
&H^-_\e(c):= \{p \in \R^2 | p \cdot \textit{\textbf{e}} <c\},\\
&S_{\textit{\textbf{e}}}(c_0,c_1):=\{p \in \R^2 | c_0 < p\cdot
\textit{\textbf{e}}<c_1\},
\end{align*}
for some constants $c,c_0,c_1 \in \R,$ with $c_0 <c_1.$

\begin{lem}\label{A1}Let $u$ be a minimizer to $I(u)$ with
$\|\nabla u\|_{L^\infty(B_1)} \leq M.$ Assume that there exist a
direction $\textit{\textbf{e}}$ and constants $c_0,c_1$ such that
\begin{equation}\label{inclusion}S_{\textit{\textbf{e}}}(c_0,c_1) \cap \nabla u (B_1) \subset
O_\lambda.\end{equation}Then, there exists $\delta>0$ depending on
$c_1-c_0, \lambda,M, \|\nabla F\|_{L^{\infty}(\mathcal{B}_{M})},$
such that either
\begin{equation}\label{upperplane}
\nabla u(B_\delta) \subset H^+_\e(c_0)
\end{equation}or
\begin{equation}\label{lowerplane}
\nabla u(B_\delta) \subset H^-_\e(c_1).
\end{equation}
\end{lem}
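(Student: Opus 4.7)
\noindent\emph{Plan.} Set $v:=\nabla u\cdot\e$. Differentiating the Euler--Lagrange equation of $I$ in the direction $\e$ yields the linearized divergence--form equation
\[
\partial_i\bigl(F_{ij}(\nabla u)\,\partial_j v\bigr)=0\quad\text{in }B_1,
\]
together with the pointwise identity $F_{ij}(\nabla u)\,\partial_j v=\partial_{\e}\bigl(F_i(\nabla u)\bigr)$; the right--hand side is the $\e$--derivative of a function pointwise bounded by $\|\nabla F\|_{L^\infty(\mathcal B_M)}$, which will be the source of a clean right--hand side below. The first step is to test this PDE against $g(v)\eta^2$, where $\eta\in C_c^\infty(B_{3/4})$ is a standard cutoff equal to $1$ on $B_{1/2}$ and $g\colon\R\to\R$ is a smooth non--decreasing function with $\operatorname{supp}g'\subset[c_0,c_1]$, $g'\le 1$, and $|g|\le c_1-c_0$. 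Substituting the identity above into the resulting cross--term, integrating by parts once more along $\e$, using that $F_{ij}(\nabla u)\ge\lambda I$ on $\{c_0<v<c_1\}$ (by (\ref{inclusion})), and absorbing a $g'(v)|\nabla v|^2$--contribution via Young's inequality, I expect the \emph{weak Caccioppoli estimate}
\[
\int_{B_{1/2}\cap\{c_0<v<c_1\}}|\nabla v|^2\,dx\;\le\;C,
\]
with $C$ depending only on $\lambda$, $c_1-c_0$, $M$, and $\|\nabla F\|_{L^\infty(\mathcal B_M)}$. Equivalently, the truncation $\tilde v:=\max\{c_0,\min\{v,c_1\}\}$ has bounded Dirichlet energy on $B_{1/2}$.

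\medskip

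Next, I would use a purely two--dimensional oscillation argument. Writing the Dirichlet integral of $\tilde v$ in polar coordinates and averaging against $d\rho/\rho$, the Courant--Lebesgue mean--value trick produces $\rho^*\in(\delta,1/2)$ with
\[
\rho^*\int_{\partial B_{\rho^*}}|\nabla\tilde v|^2\,d\sigma\;\le\;\frac{C}{\log(1/(2\delta))},
\]
so by Cauchy--Schwarz along the circle, $\operatorname{osc}_{\partial B_{\rho^*}}\tilde v\le C'/\sqrt{\log(1/(2\delta))}$. Choosing $\delta$ small enough (in $\lambda$, $c_1-c_0$, $M$, $\|\nabla F\|_{L^\infty(\mathcal B_M)}$) to push this oscillation below $(c_1-c_0)/2$, and unwinding the definition of $\tilde v$, forces either $v\le(c_0+c_1)/2<c_1$ throughout $\partial B_{\rho^*}$, or symmetrically $v\ge(c_0+c_1)/2>c_0$ there.

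\medskip

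To finish, I would propagate this one--sided strict inequality from $\partial B_{\rho^*}$ into $B_{\rho^*}\supset B_\delta$ using the comparison principle for the strictly convex minimization problem -- applied to $u$ and its translated competitor $u(\,\cdot\,+\tau\e)-c\tau$ (itself a minimizer of $I$) for $c$ slightly larger than $(c_0+c_1)/2$ and $\tau>0$ small, or equivalently by the classical maximum principle applied to the uniformly convex approximation $F+\varepsilon|p|^2/2$ and passed to the limit $\varepsilon\to 0$. This gives $v\le(c_0+c_1)/2$ in $B_{\rho^*}$, hence (\ref{lowerplane}), and the mirror argument gives (\ref{upperplane}). The heart of the proof is the weak Caccioppoli of the first paragraph: because $D^2F$ has no upper bound outside $V_\Lambda$, the classical right--hand side $\int v^2|\nabla\eta|^2$ of (\ref{cacc}) is not available, and one must exploit the divergence identity $F_{ij}(\nabla u)\partial_j v=\partial_\e F_i(\nabla u)$ to convert the nonlinear factor into a derivative of a bounded quantity, buying the extra integration by parts that collapses the right--hand side to a pure constant. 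This is the ``precise form of the nonlinearity'' referred to in the introduction, and is what makes the two--dimensional argument work without a full De Giorgi iteration.
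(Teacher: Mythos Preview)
Your proposal is correct and follows essentially the same route as the paper: the weak Caccioppoli inequality you derive by testing against $g(u_\e)\eta^2$ and using the identity $F_{ij}(\nabla u)\partial_j u_\e=\partial_\e\bigl(F_i(\nabla u)\bigr)$ is exactly the paper's Proposition~2.3, and the two--dimensional step is the same $\log(1/\delta)$ mechanism combined with the maximum principle for $u_\e$. The only cosmetic difference is the order of operations --- the paper argues by contradiction (max principle first, to place points with $u_\e\le c_0$ and $u_\e\ge c_1$ on \emph{every} circle $\partial B_r$, then integrates in $r$ to contradict Caccioppoli), whereas you run Courant--Lebesgue to locate \emph{one} good circle and then push the one--sided bound inward by the maximum principle; since $u$ is assumed smooth here (these are a~priori estimates), that last step is just the classical maximum principle for the linear equation satisfied by $u_\e$, and the translated--competitor/approximation detour you sketch is unnecessary.
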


\begin{lem}\label{A2}Let $u$ be a minimizer to $I(u)$ with
$\|\nabla u\|_{L^\infty(B_1)} \leq M.$  Assume that there exist a
direction $\textit{\textbf{e}}$ and constants
$\tilde{c},\varepsilon$ such that
\begin{equation}\label{inclusion2} H^+_\e(\tilde{c}-\eps) \cap \nabla u (B_1) \subset
V_\Lambda.\end{equation}Then, there exists $\delta>0$ depending on
$\eps, \Lambda, \omega_F, M$, such that either
\begin{equation}\label{upperplane2}
\nabla u(B_\delta) \subset H^+_\e(\tilde{c}-\eps)
\end{equation}or
\begin{equation}\label{lowerplane2}
\nabla u(B_\delta) \subset H^-_\e(\tilde{c}+\eps).
\end{equation}
\end{lem}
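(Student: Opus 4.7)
The plan is to prove the dichotomy by contradiction: if neither \eqref{upperplane2} nor \eqref{lowerplane2} holds in $B_\delta$, then there exist points $x_\pm \in B_\delta$ with $v(x_+) \ge \tilde c + \eps$ and $v(x_-) \le \tilde c - \eps$, where $v := \nabla u \cdot \e$, and we will derive a quantitative lower bound on $\delta$ from this. After subtracting $\tilde c\, \e \cdot x$ from $u$ and shifting $F$ by an affine function, we may assume $\tilde c = 0$, so the hypothesis reads $\{v > -\eps\} \cap \nabla u(B_1) \subset V_\Lambda$ and $D^2 F(\nabla u) < \Lambda I$ throughout this region. Crucially, \emph{no} lower ellipticity bound is available here, which is what makes the argument non-standard.

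The core step is a ``weak Caccioppoli'' estimate of the schematic form
\[
\int_{B_{3/4} \cap \{v > 0\}} |\nabla v|^2 \, dx \;\le\; C(\eps, \Lambda, \omega_F, M),
\]
with right-hand side \emph{not depending on $v$}. To produce it, I would compete $u$ against a function $\tilde u$ obtained by capping the $\e$-directional derivative of $u$ at a level slightly above $0$ --- for instance via an infimal convolution $\tilde u(x) := \inf_{s \ge 0}\bigl[u(x - s\e) + s\eps/2\bigr]$, smoothly cut off so that $\tilde u = u$ outside $B_{3/4}$. In the region where $\tilde u \neq u$, the $\e$-directional derivative of $u$ exceeds $\eps/2 > -\eps$, so $\nabla u \in V_\Lambda$ and $D^2 F(\nabla u) < \Lambda I$ there. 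Writing out $I(u) \le I(\tilde u)$ and pairing the convexity bound $F(p) - F(q) - \nabla F(q) \cdot (p-q) \ge \omega_F(|p-q|)$ with the upper Hessian estimate, the $u$-dependent quadratic terms should be absorbed, leaving a right-hand side depending only on $\eps, \Lambda, \omega_F, M$.

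Given this estimate, the dichotomy follows by a two-dimensional slicing argument. Since $v(x_+) - v(x_-) \ge 2\eps$ and $x_\pm \in B_\delta$, an averaging over circles $\partial B_\rho$ of radii $\rho \in (\delta, 1/2)$ combined with Cauchy--Schwarz shows that for some such $\rho$ the oscillation of $v$ on $\partial B_\rho$ is at least a fixed positive quantity while its Dirichlet energy on that circle is controlled by $C / \log(1/\delta)$; this is impossible unless $\delta$ is bounded below in terms of the listed parameters.

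The main obstacle is establishing the weak Caccioppoli with constant right-hand side. In Lemma \ref{A1} a lower bound on $[F_{ij}]$ allows a direct energy estimate on the linearized equation $\p_i(F_{ij}(\nabla u) v_j) = 0$; here only an upper bound is available, and the standard test-function method on the linearized equation cannot be closed. The coercivity must instead be extracted from the convexity modulus $\omega_F$ via a direct nonlinear energy comparison against the competitor $\tilde u$, which is precisely the ``precise form of the nonlinearity'' device advertised in the introduction. The fact that $\|\nabla F\|_{L^\infty}$ does \emph{not} appear in the constants (contrast Lemma \ref{A1}) is a further indication that one should not test against an additive perturbation $u + s\varphi$, which would bring in $\nabla F$, but rather against a nonlinear competitor that interacts with $F$ only through the convexity modulus.
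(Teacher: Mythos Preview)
Your contradiction/circle-slicing framework is the right skeleton and matches the paper, but the heart of the argument --- the weak Caccioppoli inequality --- is aimed at the wrong quantity and obtained by the wrong mechanism, and this is a genuine gap.

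You propose to bound $\int_{\{v>0\}}|\nabla v|^2$ with $v=u_\e$. With only $D^2F<\Lambda I$ and no lower ellipticity, there is no reason this should hold: $|\nabla u_\e|$ can be huge where $D^2F$ is nearly singular. The paper sidesteps this by controlling instead $w:=\nabla F(\nabla u)$, proving
\[
\int_{B_{1/2}\cap\{u_\e>\tilde c\}}|\nabla w|^2\,dx\le C(\tilde c,\eps,\Lambda,M).
\]
This \emph{is} obtained from the linearized equation, contrary to your diagnosis. One tests $\partial_i(F_{ij}u_{\e j})=0$ with $\phi=\xi^2(u_\e-(\tilde c-\eps))^+$; the point is that $A<\Lambda I$ implies $\langle A\zeta,\zeta\rangle\ge\Lambda^{-1}|A\zeta|^2$, so the quadratic form $F_{ij}u_{\e i}u_{\e j}$ dominates $\Lambda^{-1}|D^2F\,\nabla u_\e|^2$, which is exactly (a column of) $|\nabla w|^2$. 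Repeating with a nearby direction $\mathbf f$ gives the full $|\nabla w|^2$. No $\|\nabla F\|_{L^\infty}$ appears because the boundary term is absorbed by Young's inequality back into the same $|D^2F\,\nabla u_\e|^2$ term --- so your inference that the linearized route is unavailable is mistaken.

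The modulus $\omega_F$ then enters not in the Caccioppoli step but in the slicing: strict convexity forces the images $\nabla F(\{p\cdot\e\le\tilde c\})$ and $\nabla F(\{p\cdot\e\ge\tilde c+\eps\})$ to be separated by some $2\eta(\eps,\omega_F)>0$, so one builds a Lipschitz $\mathcal G$ distinguishing them, and the circle argument is run on $\mathcal G(w)$ rather than on $v$. Your infimal-convolution competitor is an interesting idea, but as written it is only a hope (you yourself flag it as ``the main obstacle''); it is not needed, and it is unclear it would ever yield control of $|\nabla v|^2$ rather than of something like $\omega_F(|\nabla u-\nabla\tilde u|)$, which is again closer to $w$ than to $v$.
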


As observed in the introduction, these Lemmas say that as we
restrict to smaller and smaller domains in the $x$-space the image
of the gradient restricts itself to either one of two half-planes.
The Caccioppoli-type inequalities in the next subsection will be
the key tool towards the proof of these Lemmas.

\subsection{Caccioppoli-type inequalities.}
Let $u$ be a smooth minimizer to $I(u)$, then $u$ satisfies the
Euler-Lagrange equation
\begin{equation}\label{EL}
\text{div}(\nabla F(\nabla u))= 0 \ \text{in $B_1$}.
\end{equation} Differentiating the equation in the direction $\e$ we obtain
\begin{equation}\label{weak}
\int_{B_1} F_{ij} (\nabla u) u_{\e j} \phi_i dx =0, \ \forall \phi
\in C_0^\infty(B_1).
\end{equation}

Let $c_0,c_1 \in \R$ with  $c_0 <c_1$ and denote by
\begin{equation}\label{G}
G(t):=
  \begin{cases}
    0 & \text{$t \leq c_0$}, \\
    1 & \text{$t \geq c_1$},
  \end{cases}
\end{equation}
a smooth function with bounded slope $|G'(t)| < C$.

\begin{prop}\label{Cacc1}Let $u$ be a minimizer to $I(u)$ with
$\|\nabla u\|_{L^\infty(B_1)} \leq M.$  Assume that there exist a
direction $\textit{\textbf{e}}$ and constants $c_0,c_1$ such that
\begin{equation}\label{inclusion3}S_{\textit{\textbf{e}}}(c_0,c_1) \cap \nabla u (B_1) \subset
O_\lambda.\end{equation}Then,
\begin{equation}\label{Caccineq1}
\int_{B_{1/2}} |\nabla (G(u_{\textit{\textbf{e}}}))|^2 dx \leq C
\end{equation}
for some constant $C$ depending on $c_0,c_1,\lambda, M, \|\nabla
F\|_{L^{\infty}(\mathcal{B}_{M})}$.
\end{prop}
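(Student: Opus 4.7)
The plan is to run a Caccioppoli-type computation on $v:=u_{\e}$, compensating for the absence of any pointwise upper bound on $D^{2}F$ by exploiting a two-dimensional divergence-free / stream-function identity.

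I would test the linearised Euler--Lagrange equation \eqref{weak} with $\phi:=G(u_{\e})\eta^{2}$, where $\eta\in C_{0}^{\infty}(B_{1})$ is a standard cutoff equal to $1$ on $B_{1/2}$ with $|\nabla \eta|\le C$. Writing $v:=u_{\e}$ and $a_{ij}:=F_{ij}(\nabla u)$, this produces the identity
$$\int G'(v)\,a_{ij}v_{i}v_{j}\,\eta^{2}\,dx=-2\int G(v)\,a_{ij}v_{j}\,\eta\,\eta_{i}\,dx.$$
On $\{G'(v)\neq 0\}\subset\{c_{0}<v<c_{1}\}$, hypothesis \eqref{inclusion3} puts $\nabla u$ inside $O_{\lambda}$, so $a\ge \lambda I$ there. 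Hence the left-hand side controls $\lambda \int G'(v)|\nabla v|^{2}\eta^{2}\,dx$, which is what I want since $|\nabla G(v)|^{2}=G'(v)^{2}|\nabla v|^{2}\le \|G'\|_{\infty}G'(v)|\nabla v|^{2}$.

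The main obstacle is the right-hand side, as $a_{ij}$ has no pointwise upper bound where $G'\neq 0$. Here I would use that in two dimensions the original equation $\partial_{i}F_{i}(\nabla u)=0$ makes $\nabla F(\nabla u)$ divergence-free, so there exists a stream function $\psi$ with $\nabla \psi=(-F_{2}(\nabla u),F_{1}(\nabla u))$ and $|\nabla \psi|\le \|\nabla F\|_{L^{\infty}(\mathcal{B}_{M})}$. Setting $w:=\psi_{\e}=\e\cdot\nabla\psi$, which is therefore pointwise bounded by $\|\nabla F\|_{L^{\infty}(\mathcal{B}_{M})}$, a direct differentiation yields the pointwise identity
$$\bigl(a_{1j}v_{j},\,a_{2j}v_{j}\bigr)=\bigl(w_{x_{2}},\,-w_{x_{1}}\bigr),$$
so that $a_{ij}v_{j}$ is the $90^{\circ}$-rotated gradient of a bounded function.

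Substituting this identity into the right-hand side and integrating by parts, the mixed second derivatives of $\eta$ and the symmetric cross terms cancel, leaving an integral bounded by $\|\nabla F\|_{L^{\infty}(\mathcal{B}_{M})}\int G'(v)|\nabla v||\nabla \eta|\eta\,dx$. A Cauchy--Schwarz in the measure $G'(v)\,dx$ combined with absorption into the LHS then yields $\int G'(v)|\nabla v|^{2}\eta^{2}\,dx\le C$ with $C$ depending only on $c_{0},c_{1},\lambda,M,\|\nabla F\|_{L^{\infty}(\mathcal{B}_{M})}$, and hence \eqref{Caccineq1}. The only non-routine step is the identification of the bounded auxiliary function $w=\psi_{\e}$; once this is in hand, everything else is the standard Caccioppoli calculation.
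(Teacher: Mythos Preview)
Your argument is correct. The test function, the lower bound on the left-hand side via $O_\lambda$, and the absorption at the end are exactly as in the paper. The only difference is how you handle the right-hand side, and here you take a slightly more elaborate route than necessary.

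The paper does not introduce a stream function. It simply observes the pointwise identity
\[
a_{ij}v_j \;=\; F_{ij}(\nabla u)\,u_{\e j} \;=\; \partial_\e\bigl(F_i(\nabla u)\bigr),
\]
which holds in any dimension, and integrates by parts in the fixed direction $\e$. This immediately replaces the unbounded factor $a_{ij}v_j$ by the bounded factor $F_i(\nabla u)$, at the cost of a harmless term $F_i\,\partial_\e(\xi\xi_i)\,G(u_\e)$ and a term $F_i\,\xi\xi_i\,\partial_\e(G(u_\e))$ that is absorbed by Young's inequality, exactly as you do at the end.

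Your stream-function identity $(a_{1j}v_j,a_{2j}v_j)=(w_{x_2},-w_{x_1})$ with $w=\psi_\e$ is in fact the same observation in disguise: since $\psi_{x_2}=F_1(\nabla u)$ and $\psi_{x_1}=-F_2(\nabla u)$, one has $w_{x_2}=\partial_\e F_1(\nabla u)$ and $-w_{x_1}=\partial_\e F_2(\nabla u)$. So both arguments exploit that $a_{ij}v_j$ is a derivative of a quantity bounded by $\|\nabla F\|_{L^\infty(\mathcal B_M)}$. Your version has the pleasant feature that the $G(v)$-terms cancel exactly after integration by parts, whereas the paper simply bounds them; on the other hand, the paper's version is shorter, avoids the auxiliary $\psi$, and does not invoke the two-dimensionality at this stage (that is only used later, in the proof of the localization lemma).
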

\begin{proof} In formula \eqref{weak}, let us choose as test function $\phi= \xi^2
G(u_\textit{\textbf{e}})$, with $\xi \in C^\infty_0(B_1), 0 \leq
\xi \leq 1$ and $\xi \equiv 1$ on $B_{1/2}$. We obtain
\begin{equation}\label{step1}
\int_{B_1} F_{ij}(\nabla u)u_{\e j}\xi^2(G(u_\e))_i dx = -2
\int_{B_1} F_{ij}(\nabla u) u_{\e j} \xi \xi_i G(u_\e) dx.
\end{equation}Now we analyze the left-hand side of \eqref{step1}.

\begin{align}
\nonumber LHS&= \int_{B_1} F_{ij}(\nabla u)u_{\e j}u_{\e i} \xi^2
G'(u_\e) dx \\ \label{defA1}& \geq \lambda \int_{B_1} |\nabla
u_\e|^2 \xi^2 G'(u_\e)dx \geq c\lambda \int_{B_1} |\nabla
(G(u_\e))|^2 \xi^2 dx,\end{align} where in the first inequality in
\eqref{defA1} we used the assumption \eqref{inclusion3} and the
definition of $O_\lambda$, while in the second one we used that
$G'$ is bounded. Thus,
\begin{equation}\label{LHS}
LHS \geq c\lambda \int_{B_{1}}|\nabla (G(u_\e))|^2\xi^2 dx.
\end{equation} On the other hand, using integration by parts we
have that the right-hand side of \eqref{step1} is given by
\begin{align*}
RHS  & = - 2  \int_{B_1} \partial_\e(F_i(\nabla u))\xi\xi_i
G(u_\e)dx \\\label{parts} & = 2 \int_{B_1} F_i(\nabla u)
\partial_\e (\xi \xi_i) G(u_\e)dx +
\int_{B_1} F_i(\nabla u)\xi \xi_i \partial_\e
(G(u_\e))dx.\end{align*} Thus, for any $\gamma \leq 1$,
\begin{equation} \label{young}
|RHS| \leq \gamma\int_{B_1} |\nabla (G(u_\e))|^2 \xi^2 dx +
C/\gamma
\end{equation}
with $C$ depending on $\|\nabla F\|_{L^\infty(\mathcal{B}_M)}.$
Combining \eqref{LHS} with \eqref{young} and choosing $\gamma$
sufficiently small we obtain the desired claim.

\end{proof}

\begin{prop}\label{Cacc2}Let $u$ be a minimizer to $I(u)$ with
$\|\nabla u\|_{L^\infty(B_1)} \leq M.$ Assume that there exist a
direction $\textit{\textbf{e}}$ and constants $\tilde{c}, \eps $
such that
\begin{equation}\label{inclusion4} H^+_\e(\tilde{c}-\eps) \cap \nabla u (B_1) \subset
V_\Lambda.\end{equation}Then, set $w=\nabla F(\nabla u)$ and $U=
(\nabla u)^{-1}(H^+_\e(\tilde{c})),$
\begin{equation}\label{Caccineq2}
\int_{B_{1/2} \cap U} |\nabla w|^2 dx \leq C
\end{equation}
for some constant $C$ depending on $\tilde{c}, \eps, \Lambda, M$.
\end{prop}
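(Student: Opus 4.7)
The proof aims to mirror that of Proposition~\ref{Cacc1}. The algebraic key is that on the support of any cutoff $G(u_{\e})$ with $G\equiv 0$ on $(-\infty,\tilde c-\eps]$ we have $\nabla u\in V_\Lambda$, so $0\leq A:=F_{\cdot\cdot}(\nabla u)\leq\Lambda I$ as symmetric matrices. This yields $A^2\leq\Lambda A$, and applying this to each column of $D^2 u$ gives the pointwise bound
\[
|\nabla w|^2\leq \Lambda\,\mathrm{tr}\bigl(F_{\cdot\cdot}(\nabla u)\,(D^2u)^2\bigr).
\]
Taking $\xi\in C_0^\infty(B_1)$ with $\xi\equiv 1$ on $B_{1/2}$, and $G$ smooth with $G\equiv 1$ on $[\tilde c,\infty)$ and $|G'|\leq 2/\eps$, it then suffices to bound $\int\xi^2 G(u_\e)\,\mathrm{tr}(F(D^2u)^2)\,dx$.

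The natural attempt is to test the weak differentiated Euler--Lagrange identity $\int F_{ij}(\nabla u)u_{\mu j}\psi_i\,dx=0$ with $\psi^{(\mu)}=\xi^2 u_\mu G(u_{\e})$ and sum over $\mu$. Expanding produces three terms: the desired main term $\int\xi^2 G\,\mathrm{tr}(F(D^2u)^2)$, a boundary-type contribution $2\int\xi G\,\nabla\xi\cdot(\nabla u\cdot\nabla)w$ controllable by Young's inequality via the estimate $|(\nabla u\cdot\nabla)w|\leq M|\nabla w|$ together with the algebraic bound above, and a cross term $\int\xi^2 G'(u_{\e})\,\nabla u_{\e}\cdot(\nabla u\cdot\nabla)w\,dx$ supported in the transition strip $\{\tilde c-\eps<u_{\e}<\tilde c\}$.

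The main obstacle is precisely this cross term. A direct integration by parts, writing $G'(u_\e)\nabla u_\e=\nabla G(u_\e)$ and using $\mathrm{div}(w)=0$ together with the identity $\partial_i[u_k\partial_k w_i]=\mathrm{tr}(F(D^2u)^2)$, shows that the cross term equals $-(\text{boundary term})-(\text{main term})$ and the whole identity collapses to $0=0$; this trivial cancellation reflects the absence of any pointwise lower ellipticity bound for $F_{\cdot\cdot}$ on $V_\Lambda$. To break the cancellation I would exploit two-dimensional duality. Since $\mathrm{div}(w)=0$ in the plane, the stream function $v$ with $\nabla v=Jw$ is well defined, and Legendre duality identifies $v$ as a local minimizer of $\int \tilde F(\nabla v)\,dx$ with $\tilde F(q)=F^*(q_2,-q_1)$; the Hessian $D^2\tilde F(\nabla v(x))$ has the same eigenvalues as $D^2F^*(w(x))$ and is therefore $\geq\Lambda^{-1}I$ precisely on $\{\nabla u\in V_\Lambda\}$, while $|\nabla w|^2=|D^2 v|^2$. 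Repeating the Caccioppoli procedure for $v$ in place of $u$, the analogues of the boundary and cross terms now involve the a~priori $L^\infty$ quantities $\|\nabla v\|_\infty=\|w\|_\infty\leq\|\nabla F\|_{L^\infty(\mathcal B_M)}$ and $|\nabla\tilde F(\nabla v)|=|J\nabla u|\leq M$, both independent of any second-derivative quantity; combined with $|G'|\leq 2/\eps$ and a two-dimensional argument exploiting uniform convexity of $\tilde F$ on the relevant region to avoid the same cancellation, this should produce the claimed bound $C=C(\tilde c,\eps,\Lambda,M)$.
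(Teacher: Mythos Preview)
Your diagnosis of the difficulty is accurate: testing the differentiated equation with $\psi^{(\mu)}=\xi^2 u_\mu G(u_{\e})$ and summing over $\mu$ produces a cross term $\int \xi^2 G'(u_{\e})\,\nabla u_{\e}\cdot(\nabla u\cdot\nabla)w$ that cannot be absorbed, because on $V_\Lambda$ there is no lower bound on $D^2F$ and hence no control of $|\nabla u_{\e}|$ in terms of the main quantity. Your proposed cure via Legendre duality, however, is not a proof. You never specify which test function to use for the conjugate problem, and the phrase ``a two-dimensional argument exploiting uniform convexity of $\tilde F$ \ldots\ to avoid the same cancellation'' is a hope, not an argument: the image of the half-space $H^+_{\e}(\tilde c-\eps)$ under $p\mapsto J\nabla F(p)$ is in general not a half-space, so there is no evident analogue of the cutoff $G$ in the $\nabla v$ variable, and you face the same structural problem on the dual side.

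The paper avoids the cross term altogether by a much simpler choice of test function. Instead of summing over all directions, it tests the equation differentiated in the \emph{single} direction $\e$ with $\phi=\xi^2(u_{\e}-(\tilde c-\eps))^+$. The point is that the cutoff is a function of $u_{\e}$ itself, so its gradient is $\chi_{U^\eps}\nabla u_{\e}$ and the would-be cross term and the main term coincide: one obtains directly
\[
\int_{U^\eps}\xi^2\,(\nabla u_{\e})^T D^2F(\nabla u)\,\nabla u_{\e}\,dx
= -2\int \xi\,(u_{\e}-(\tilde c-\eps))^+\,\nabla\xi\cdot\bigl(D^2F(\nabla u)\nabla u_{\e}\bigr)\,dx.
\]
On $U^\eps$ the matrix inequality $A^2\le\Lambda A$ you already noted gives $(\nabla u_{\e})^T A\,\nabla u_{\e}\ge\Lambda^{-1}|A\nabla u_{\e}|^2$, and the right-hand side is handled by Young's inequality since $(u_{\e}-(\tilde c-\eps))^+$ is bounded by $M+|\tilde c|+\eps$. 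This yields $\int_{B_{1/2}\cap U^\eps}|D^2F(\nabla u)\nabla u_{\e}|^2\le C$, i.e.\ control of one column of $\nabla w$. For the other column the paper repeats the argument with a second direction $\mathbf f$ close to $\e$, chosen so that a line $\{p\cdot\mathbf f=k\}$ inside $\mathcal B_M$ lies in the strip $S_{\e}(\tilde c-\eps,\tilde c)$; then $H^+_{\mathbf f}(k)\cap\nabla u(B_1)\subset V_\Lambda$ and $U\subset(\nabla u)^{-1}(H^+_{\mathbf f}(k))$, and the same computation bounds $|D^2F(\nabla u)\nabla u_{\mathbf f}|^2$ on $B_{1/2}\cap U$. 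Two independent columns give the full $|\nabla w|^2$.
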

\begin{proof}In formula \eqref{weak} choose as test function $\phi= \xi^2(u_\e -
(\tilde{c}-\eps))^+$ with $\xi \in C_0^\infty(B_1), 0 \leq \xi
\leq 1$ and $\xi \equiv 1$ on $B_{1/2}.$ Set $U^\eps := (\nabla
u)^{-1}(H^+_\e(\tilde{c}-\eps)) \supset U.$

Then,
\begin{equation}\label{test2}
\int_{U^\eps} F_{ij}(\nabla u)u_{\e j}u_{\e i}\xi^2dx = -2
\int_{B_1}F_{ij}(\nabla u)u_{\e j}\xi \xi_i (u_\e -
(\tilde{c}-\eps))^+dx.
\end{equation}
According to assumption \eqref{inclusion4} and the definition of
$V_\Lambda$, on the set $U^\eps$ we have that
\begin{align*}
F_{ij}(\nabla u)u_{\e j}u_{\e i} &= (\nabla u_\e)^T \cdot
D^2F(\nabla u) \cdot \nabla u_\e \geq
\frac{1}{\Lambda}|D^2F(\nabla u)\cdot \nabla u_\e|^2.
\end{align*}
Hence, after bounding the right-hand side of \eqref{test2} using
Young's inequality we obtain
\begin{equation}\label{test3}
\int_{U^\eps} |D^2F(\nabla u)\cdot \nabla u_\e|^2 \xi^2 dx \leq
\gamma \int_{U^\eps} |D^2F(\nabla u)\cdot \nabla u_\e|^2 \xi^2 dx
+ C/\gamma
\end{equation}
for some constant $C$ depending on $\Lambda, M,\tilde{c},\eps$ and
any $\gamma
>0.$ Hence for $\gamma$ small enough we obtain
\begin{equation}\label{test4}
\int_{B_{1/2} \cap U^\eps} |D^2F(\nabla u)\cdot \nabla u_\e|^2 dx
\leq C.
\end{equation}
Now, let $\textbf{f}$ be a direction close to $\e$, $\textbf{f}
\neq \e$, such that for some constant $k \in \R,$
$$\{p \cdot \textbf{f} = k \} \cap \mathcal{B}_M \subset S_\e(\tilde{c}-\eps,
\tilde{c}).$$ Then,
\begin{equation*} H^+_\textbf{f}(k) \cap \nabla u (B_1) \subset
V_\Lambda,\end{equation*} and we can repeat the same argument as
above with $U^k := (\nabla u)^{-1}(H^+_\textbf{f}(k)) \supset U$
to conclude that
\begin{equation}\label{test5}
\int_{B_{1/2} \cap U^k} |D^2F(\nabla u)\cdot \nabla
u_{\textbf{f}}|^2 dx \leq C.
\end{equation}
Combining \eqref{test4} with \eqref{test5} we obtain
\begin{equation*}
\int_{B_{1/2} \cap U} |D^2F(\nabla u)D^2 u|^2 dx \leq C,
\end{equation*}
which gives the desired inequality.

\end{proof}

\subsection{The proof of the localization Lemmas.}

We are now ready to prove the localization Lemmas, using the
Caccioppoli-type inequalities above and the fact that our problem
is two dimensional.

\

\noindent \textbf{Proof of Lemma \ref{A1}.} Let $\delta>0$ and
assume that there exist $x_0,x_1 \in B_\delta$ such that $$\nabla
u (x_0) \cdot \e \leq c_0, \quad \nabla u (x_1) \cdot \e \geq
c_1.$$ Then, by the maximum principle
$$ \{\nabla u \cdot \e \leq c_0\} \cap \partial B_r \neq \emptyset,$$
$$ \{\nabla u \cdot \e \geq c_1\} \cap \p B_r \neq \emptyset,$$
for all $r$, with $\delta \leq r \leq 1/2.$
Let $$x_0^r \in  \{\nabla u \cdot \e \leq c_0\} \cap
\partial B_r, \quad x_1^r \in  \{\nabla u \cdot \e \geq c_1\} \cap
\partial B_r$$ for all $\delta \leq r \leq 1/2.$ Then, if
$G$ is the function defined in \eqref{G}, we have
\begin{equation}\label{model}
1 = G(u_\e(x_1^r)) - G(u_\e(x_0^r)) \leq \int_{\partial
B_r(x_0^r,x_1^r)} |\nabla (G(u_\e))|ds.
\end{equation}
Applying Cauchy-Schwartz we obtain
\begin{equation*}
\int_{\partial B_r(x_0^r,x_1^r)} |\nabla (G(u_\e))|^2ds \geq
\frac{1}{2\pi r}.
\end{equation*}Thus,
\begin{equation*}
\int_{\delta}^{1/2} \int_{\partial B_r(x_0^r,x_1^r)} |\nabla
(G(u_\e))|^2ds dr \geq
\frac{1}{2\pi}\int_{\delta}^{1/2}\frac{1}{r}dr =\frac{1}{2\pi}\ln
\frac{1}{2\delta}.
\end{equation*}Hence,
\begin{equation*}
\int_{B_{1/2}} |\nabla (G(u_\e))|^2dx \geq \frac{1}{2\pi}\ln
\frac{1}{2\delta},
\end{equation*}
which for $\delta$ sufficiently small depending on $c_0, c_1,
\lambda,M, \|\nabla F\|_{L^\infty(\mathcal{B}_M)},$ contradicts
the inequality \eqref{Caccineq1}.

\qed

\

\noindent \textbf{Proof of Lemma \ref{A2}.} Let $\delta>0$ and
assume that there exist $x_0,x_1 \in B_\delta$ such that $$\nabla
u (x_0) \cdot \e \leq \tilde{c}, \quad \nabla u (x_1) \cdot \e
\geq \tilde{c} + \eps.$$ Then, as in the previous lemma, for all
$r$ with $\delta \leq r \leq 1/2$ there are points $x_0^r,x_1^r
\in
\partial B_r$ such that
$$x_0^r \in \{\nabla u  \cdot \e \leq \tilde{c}\}, \quad x_1^r \in \{\nabla u \cdot \e \geq \tilde{c} + \eps\}.$$
 Denote by
$$\underline{H}:= \nabla F (\{p \cdot \e \leq \tilde{c}\})$$
$$\overline{H} : = \nabla F(\{p \cdot \e \geq \tilde{c} + \eps \}).$$
Set $w= \nabla F (\nabla u)$, then $w$ maps $x_0^r$ and $x_1^r$
respectively in $\underline{H}$ and $\overline{H}$.

Now, since $F$ is strictly convex, there exists
$\eta=\eta(\eps)>0$ depending on $\omega_F$ such that
$$|p_1 - p_2| \geq \eps \Rightarrow |\nabla F(p_1) -\nabla F(p_2)| \geq 2\eta.$$
Let $\mathcal{G}$ be a smooth function on $\R^2$ with $|\nabla
\mathcal{G}| \leq 1$ such that $$\mathcal{G}(p)=
  \begin{cases}
    0 & \text{$p \in \underline{H}$}, \\
    \eta & \text{$p \in \overline{H}$}.
  \end{cases}
$$ Notice that $\mathcal{G}$ can be obtained by a mollification of
$\min\{\eta, (dist(p,\underline{H}) - \gamma)^+\}$, with $\gamma$
small.

Then,
 we have
\begin{equation}
\eta = \mathcal{G}(w(x_1^r)) - \mathcal{G}(w(x_0^r)) \leq
\int_{\partial B_r(x_0^r,x_1^r)} |\nabla (\mathcal{G}(w))|ds.
\end{equation}


Arguing as in the proof of the previous Proposition (see
computations following \eqref{model}), we get
\begin{equation}\label{semifinal} \int_{B_{1/2}} |\nabla
(\mathcal{G}(w))|^2dx \geq \frac{\eta}{2\pi}\ln \frac{1}{2\delta}.
\end{equation}

However,
\begin{align}\label{final}
\int_{B_{1/2}} |\nabla (\mathcal{G}(w))|^2dx &\leq \int_{B_{1/2}}
|\nabla \mathcal{G}|^2 |\nabla w|^2dx \leq \int_{B_{1/2} \cap
(\nabla u)^{-1}(S_\e(\tilde{c},\tilde{c}+\eps))} |\nabla w|^2dx.
\end{align}
Therefore, combining \eqref{semifinal} with \eqref{final}, we
obtain a contradiction to \eqref{Caccineq2} as long as $\delta$ is
sufficiently small.

\qed


\subsection{The proof of Theorem 1.1.} We wish to prove the
following claim:
$$\forall \eps >0, \exists \delta>0, \delta=\delta(\ep,\omega_F, O_\lambda, V_\Lambda,
M,\|\nabla F\|_{L^\infty(\mathcal{B}_M)}) \ \text{such that} \
diam\{\nabla u (B_\delta)\} \leq \eps$$ for any minimizer $u$ to
$I(u)$.

Since $O_\lambda, V_\Lambda$ are open sets covering
$\overline{\mathcal{B}}_M$, there exists $\ep^*>0$ such that any
ball $\mathcal{B}_{\ep^*}(p) \subset \mathcal{B}_{2M}$ is either
contained in $O_\lambda$ or in $V_\Lambda$.

\noindent Now, let $\ep < \ep^*$ and let $P_N$ be an $N$-th
regular polygon with sides of length $\ep/2,$ such that
$$\mathcal{B}_M \subset P_N \subset \mathcal{B}_{2M}.$$ Thus $N=N(\ep, M).$
Denote by $p_i$ the vertices of $P_N$, and let $T_{i}$ be the
triangle with vertices $p_i,p_{i+1}$ and $p_{i+2}, i=1,\ldots N
(p_{N+1}=p_1, p_{N+2}=p_2).$ Since the length of the sides of
$P_N$ is $\ep/2$, clearly $T_i$ is included in a ball
$\mathcal{B}_{\ep^*}(p) \subset \mathcal{B}_{2M}.$ Thus,
$$\text{either} \ T_i \subset O_\lambda \ \text{or} \ T_i\subset V_\Lambda.$$
Without loss of generality we can assume that for some
$i=1,\ldots, N,$ \begin{equation*}\label{nonempty}T_i \cap \nabla
u (B_1) \neq \emptyset,\end{equation*} otherwise we can work with
the polygon $P_N \setminus \bigcup_{i} T_{i}.$ Let $[p,q]$ be the
closed segment joining two points $p$ and $q$. Denote by $m_i$ the
middle point of the segments $[p_i,p_{i+1}]$. Also let $\e_i$ be
the direction such that
$$ [p_{i}, p_{i+2}]\subset \{p\cdot \e_i = l_i\}, \quad [m_i, m_{i+1}]\subset\{p \cdot
\e_i = l_i+\gamma\},$$ for some  $l_i,\gamma,$ with $0 < \gamma
<\eps/2.$ Then according to either Lemma \ref{A1} or Lemma
\ref{A2} we have that either
\begin{equation}\label{1}
\nabla u (B_\delta) \subset H^-(\e_i, l_i+\gamma),\end{equation}
or
\begin{equation}\label{2} \nabla u (B_\delta) \subset H^+(\e_i, l_i),\end{equation} for some
$\delta =\delta (\ep,\lambda,\Lambda, \omega_F, M,\|\nabla
F\|_{L^\infty(\mathcal{B}_M)}).$

\noindent If the latter holds then immediately $diam\{\nabla
u(B_\delta)\} \leq \ep$ as desired.

Assume that \eqref{2} does not hold for any choice of $i$. Then,
according to \eqref{1} we have that
$$\nabla u (B_{\delta^N}) \subset \widetilde{P}_N$$ where
$\widetilde{P}_N$ is the polygon with vertices the middle points
$m_i, i=1,\ldots, N.$ Now, we repeat this argument with
$\widetilde{P}_N$ instead of $P_N$ and iterate this a finite
number of times till we obtain a polygon of diameter smaller than
$\ep.$

 \qed

 \section{The proof of Theorem \ref{Main2}}

As in the previous section, let $F : \R^2 \rightarrow \R$ be a
smooth strictly convex function, and denote by $O_{1/n}$ an
increasing sequence of open sets such that
\begin{align*}
O_{1/n}& \subset \{p \in \R^2, D^2F(p) > \frac{1}{n}I\}.\\
\end{align*}
Recall that,
\begin{equation*}
I(u)= \int_{B_1} F(\nabla u)dx.
\end{equation*}

In this section we prove the following a priori estimate.

\

\noindent \textbf{Theorem 1.2.} \textit{Let $u$ be a minimizer to
$I(u)$ with $\|\nabla u\|_{L^\infty(B_1)} \leq M.$ Assume that
$\overline{\mathcal{B}}_M \setminus \bigcup O_{1/n}$ is a finite
set. Then in $B_{1/2}$, $\nabla u$ has a uniform modulus of
continuity depending on the sets $O_{1/n}$, $M$ and $ \|\nabla
F\|_{L^{\infty}(\mathcal{B}_{M})}.$}

\

The proof of Theorem 1.2 follows the same strategy as the proof of
 Theorem 1.1. Precisely, we determine a
 localization Lemma which allows us to apply an iteration argument
 and obtain the desired modulus of continuity.

 Here is the statement of the localization Lemma.

\begin{lem}\label{On}Let $u$ be a minimizer to $I(u)$ with
$\|\nabla u\|_{L^\infty(B_1)} \leq M.$ Assume that
\begin{equation}\label{hypOn} \nabla u (B_1) \cap
\mathcal{B}_\rho(p_0)=\emptyset, \quad \mathcal{B}_{4\rho}(p_0)
\cap \overline{\mathcal{B}}_M \subset O_{1/n} .\end{equation}Then,
there exists $\delta>0$ depending on $n,M, \rho, \|\nabla
F\|_{L^{\infty}(\mathcal{B}_{M})},$ such that either
\begin{equation}\label{ballinclusion}
\nabla u(B_\delta) \subset \mathcal{B}_{4\rho}(p_0)
\end{equation}or
\begin{equation}\label{noninter}
\nabla u(B_\delta) \cap \mathcal{B}_{3\rho}(p_0) = \emptyset.
\end{equation}
\end{lem}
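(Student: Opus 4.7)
The plan is to imitate the proof of Lemma \ref{A1}: derive a Caccioppoli-type upper bound on an appropriate auxiliary energy, obtain a logarithmic lower bound on the same quantity by the maximum principle, and combine these to get a contradiction for $\delta$ small.

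I would begin by arguing by contradiction. If neither \eqref{ballinclusion} nor \eqref{noninter} holds, there exist $x_0, x_1 \in B_\delta$ with $\nabla u(x_0) \in \mathcal{B}_{3\rho}(p_0)$ and $\nabla u(x_1) \notin \overline{\mathcal{B}}_{4\rho}(p_0)$. Setting $\textit{\textbf{e}} := (\nabla u(x_1)-p_0)/|\nabla u(x_1)-p_0|$, one has $u_\e(x_0) \leq p_0\cdot\e + 3\rho$ and $u_\e(x_1) \geq p_0\cdot\e + 4\rho$. Since $u_\e$ solves the pointwise elliptic linear equation $\partial_i(F_{ij}(\nabla u) u_{\e j}) = 0$, the maximum principle provides, for every $r \in [\delta, 1/2]$, points $y_r, z_r \in \partial B_r$ with $u_\e(y_r) \geq p_0 \cdot \e + 4\rho$ and $u_\e(z_r) \leq p_0 \cdot \e + 3\rho$; at $y_r$ one automatically has $|\nabla u(y_r) - p_0| \geq 4\rho$. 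Taking a smooth cutoff $G:\R\to[0,1]$ transitioning from $0$ at $p_0\cdot\e + 3\rho$ to $1$ at $p_0\cdot\e + 4\rho$ and setting $w(x) := G(u_\e(x))$, Cauchy-Schwarz along the arc of $\partial B_r$ joining $z_r$ and $y_r$ followed by integration in $r$ yields the lower bound
$$
\int_{B_{1/2}} |\nabla w|^2\,dx \geq \frac{1}{2\pi}\log\frac{1}{2\delta}.
$$

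The heart of the proof is the matching Caccioppoli upper bound. Proposition \ref{Cacc1} does not apply directly, because the strip $S_\e(p_0\cdot\e+3\rho, p_0\cdot\e+4\rho)\cap \nabla u(B_1)$ may contain values of $\nabla u$ that fall outside $\mathcal{B}_{4\rho}(p_0)$ and thus outside the uniformly elliptic region $O_{1/n}$. The remedy is to introduce a radial $p$-space cutoff $\psi:\R^2\to[0,1]$, supported in $\mathcal{B}_{4\rho}(p_0)$ with $\psi \equiv 1$ on $\mathcal{B}_{3\rho}(p_0)$; by the hypothesis, wherever $\psi(\nabla u)>0$ one has $\nabla u \in \mathcal{B}_{4\rho}(p_0)\cap\mathcal{B}_M \subset O_{1/n}$, hence $F_{ij}(\nabla u) \geq \tfrac{1}{n} I$ and (by smoothness of $F$ on the compact set $\overline{\mathcal{B}}_{4\rho}(p_0)\cap \overline{\mathcal{B}}_M$) $D^2F(\nabla u)$ is also uniformly bounded above. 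Testing the differentiated Euler-Lagrange equation with $\phi = \xi^2 G(u_\e)\psi(\nabla u)^2 u_\e$ (or a similar combination) and tracking the key positive term $\int\xi^2\psi^2 F_{ij}u_{\e j}u_{\e i}G'(u_\e)\,dx \gtrsim \tfrac{1}{n}\int \xi^2\psi^2|\nabla w|^2\,dx$, one absorbs the boundary/cross terms via Young's inequality and the uniform bounds above. A companion argument, using the avoidance hypothesis $\nabla u(B_1)\cap\mathcal{B}_\rho(p_0)=\emptyset$ to control the complementary region, yields the full estimate $\int_{B_{1/2}}|\nabla w|^2\,dx \leq C$ with $C=C(n,M,\rho,\|\nabla F\|_{L^\infty(\mathcal{B}_M)})$. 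Comparing with the lower bound completes the proof.

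The main obstacle is the Caccioppoli estimate of the last paragraph. Unlike Lemma \ref{A1}, where the strip geometry matches the geometry of the cutoff in $u_\e$, here the "good" region in $p$-space is a ball while the natural cutoff for the max-principle argument is direction-aligned. The technical point is to combine the direction-aligned cutoff $G(u_\e)$ with the ball cutoff $\psi(\nabla u)$ in a way that preserves the positive structure produced by $D^2F\geq \tfrac{1}{n}I$ while suppressing the contribution from the part of the strip that lies outside $\mathcal{B}_{4\rho}(p_0)$. The avoidance hypothesis is used essentially to allow the radial cutoff $\psi$ to be defined and to pick a distinguished direction $\e$.
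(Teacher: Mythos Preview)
Your approach has a genuine gap in the Caccioppoli step, and the gap is precisely the mismatch you flag in your last paragraph. The lower bound you derive is for $\int_{B_{1/2}}|\nabla w|^2$ with $w=G(u_\e)$, whose gradient is supported (in $p$-space) on the \emph{strip} $\{p:\ p\cdot\e\in(p_0\cdot\e+3\rho,\,p_0\cdot\e+4\rho)\}$; this strip exits $\mathcal{B}_{4\rho}(p_0)$ and hence exits $O_{1/n}$. Multiplying by the radial cutoff $\psi(\nabla u)$ lets you bound $\int\xi^2\psi^2|\nabla w|^2$ from above, but that is not the quantity you bounded below: the remainder $\int(1-\psi^2)|\nabla w|^2$ involves $|D^2u|^2$ on a set where $D^2F$ may be arbitrarily degenerate, and nothing in the hypotheses controls it. The avoidance $\nabla u(B_1)\cap\mathcal{B}_\rho(p_0)=\emptyset$ does not help here, since it says nothing about the part of the strip lying outside $\mathcal{B}_{4\rho}(p_0)$.

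The paper sidesteps this mismatch by taking a \emph{radial} cutoff in $p$-space from the start: $\mathcal{G}:\R^2\to[0,1]$ with $\mathcal{G}=1$ on $\mathcal{B}_{3\rho}(p_0)$ and $\mathcal{G}=0$ outside $\mathcal{B}_{4\rho}(p_0)$, so that $\nabla(\mathcal{G}(\nabla u))$ is supported exactly where $\nabla u\in O_{1/n}$. The upper bound then comes from a different Caccioppoli inequality (Proposition~\ref{Cacc3}), $\int_{B_{1/2}\cap(\nabla u)^{-1}(O_{1/n})}|D^2u|^2\leq C$, obtained by testing with $\phi=\xi^2 F_k(\nabla u)$ and using the two-dimensional identity $(D^2F\,D^2u)^2=-\det(D^2F\,D^2u)\,I$. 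The price is that the scalar maximum principle for $u_\e$ no longer yields, on every $\partial B_r$, a point with $\nabla u\in\mathcal{B}_{3\rho}(p_0)$; for that the paper invokes the preliminary Lemma~\ref{Onprel}, which uses the same trace identity together with the avoidance hypothesis $\nabla u(B_1)\cap\mathcal{B}_\rho(p_0)=\emptyset$ to show that if $\nabla u(\partial B_r)\cap\mathcal{B}_{3\rho}(p_0)=\emptyset$ then already $\nabla u(B_r)\cap\mathcal{B}_{3\rho}(p_0)=\emptyset$. This is where the avoidance hypothesis is actually used --- not to pick a direction $\e$.
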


In order to prove Lemma \ref {On}, we use the following
preliminary Lemma.

\begin{lem}\label{Onprel}Let $u$ be a minimizer to $I(u)$ in $B_2$ such
that
\begin{equation}\label{hypOnprel} \nabla u (B_1) \cap
\mathcal{B}_\eps(p_0)=\emptyset, \quad \nabla u (\partial B_1)
\cap \mathcal{B}_{\delta}(p_0) = \emptyset,\end{equation} with $0<
\eps< \delta.$ Then,
\begin{equation*} \nabla u(B_1) \cap \mathcal{B}_\delta(p_0) = \emptyset.
\end{equation*}
\end{lem}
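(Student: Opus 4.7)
I would prove this by a two-dimensional Brouwer-degree argument combined with the sense-reversing property $\det D^2 u \leq 0$ of the gradient map of a minimizer. After replacing $u$ by $u - p_0 \cdot x$ and $F$ by the still smooth strictly convex $F(\,\cdot\, + p_0)$, I may assume $p_0 = 0$; the hypotheses become $|\nabla u| \geq \eps$ on $B_1$ and $|\nabla u| \geq \delta$ on $\partial B_1$, and the claim is $|\nabla u| \geq \delta$ on $B_1$.

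Since $\nabla u$ is continuous and nonvanishing on $\overline{B_1}$, the loop $\nabla u|_{\partial B_1}$ bounds a continuous map into $\R^2 \setminus \{0\}$, hence has winding number zero about the origin. Because $\nabla u(\partial B_1) \subset \{|p| \geq \delta\}$ and $\mathcal{B}_\delta(0)$ is connected, the same winding computation yields $\deg(\nabla u, B_1, q) = 0$ for every $q \in \mathcal{B}_\delta(0)$.

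A short matrix computation from the Euler--Lagrange equation $F_{ij}(\nabla u) u_{ij} = 0$ together with the positive semidefiniteness of $D^2 F$ then gives $\det D^2 u \leq 0$ pointwise: in a basis diagonalizing $D^2 F(\nabla u)$ the trace condition forces the eigenvalues of $D^2 u$ to have opposite signs (or both to vanish). Hence $\nabla u$ is sense-reversing, and at every regular value $q$ one has $\deg(\nabla u, B_1, q) = -\#\nabla u^{-1}(q)$. Suppose, for contradiction, that there is $q_0 \in \nabla u(B_1) \cap \mathcal{B}_\delta(0)$. Invoking Morrey's real-analyticity theorem, applied to the Beltrami-type equation satisfied by $u_x + i u_y$ wherever $D^2 F(\nabla u)$ is positive definite, $u$ is real-analytic, so $\nabla u^{-1}(q_0) \subset B_1$ is either discrete or the whole ball; the second case would force $\nabla u \equiv q_0$, contradicting $q_0 \in \mathcal{B}_\delta(0)$ and $|\nabla u| \geq \delta$ on $\partial B_1$. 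In the discrete case each preimage contributes strictly negative local degree (of the form $-k$, $k \geq 1$), so $\deg(\nabla u, B_1, q_0) < 0$, contradicting the vanishing of degree. Therefore $\nabla u(B_1) \cap \mathcal{B}_\delta(0) = \emptyset$, which is the claim.

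The main obstacle I anticipate is that strict convexity of $F$ does not rule out isolated zeros of $D^2 F$, where the Beltrami equation for $u_x + i u_y$ degenerates and Morrey's analyticity does not apply directly. This would be handled by approximation: one replaces $F$ by the uniformly convex $F_\eta := F + \eta |p|^2/2$ for $\eta > 0$, runs the argument above for the corresponding smooth minimizer $u_\eta$ (whose gradient is real-analytic), and passes to the limit $\eta \to 0^+$ using $u_\eta \to u$ uniformly on compact subsets of $B_2$, which transfers the open conditions $|\nabla u| \geq \eps$ on $B_1$ and $|\nabla u| \geq \delta$ on $\partial B_1$ from $u_\eta$ back to $u$ in the limit.
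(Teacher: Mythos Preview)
Your degree-theoretic route is different from the paper's. The paper tests the differentiated Euler--Lagrange equation with $\phi=\eta_k(\nabla u)$ for a radial bump $\eta$ supported in $\mathcal{B}_\delta(p_0)$, obtaining (via the Cayley--Hamilton identity for $D^2F\,D^2u$) the integral identity $\int_{B_1}\det D^2F\,|\det D^2u|\,\mathrm{Tr}\bigl((D^2F)^{-1}D^2\eta\bigr)\,dx=0$; choosing $\eta$ so that the trace factor is strictly positive on the annulus $\mathcal{B}_\delta\setminus\overline{\mathcal{B}}_{\eps/2}$ forces $\det D^2u=0$ on $(\nabla u)^{-1}(\mathcal{B}_\delta)\cap B_1$. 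Combined with the equation and $D^2F>0$ on $\mathcal{B}_\delta$ this gives $D^2u=0$ there, so this set is open and closed in $B_1$, and connectedness finishes the proof.

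Your argument has a genuine gap at the local-degree step. From $\det D^2u\le 0$ alone it does \emph{not} follow that an isolated preimage contributes strictly negative local degree: the analytic map $(x,y)\mapsto(x^2+y^2,0)$ has $\det Df\equiv 0\le 0$, an isolated preimage of the origin, and local degree $0$. Morrey's analyticity does not rescue this claim. What does is precisely the extra structure from the equation: if $\det D^2u(x)=0$ and $\mathrm{Tr}(D^2F(\nabla u)\,D^2u)=0$ with $D^2F(\nabla u)>0$, then the symmetric $2\times 2$ matrix $D^2u(x)$ must vanish. A clean repair is therefore a dichotomy: either some point of the nonempty open set $(\nabla u)^{-1}(\mathcal{B}_\delta)\cap B_1$ has $\det D^2u<0$, so $\nabla u$ is a local diffeomorphism there, the image contains an open set of regular values in $\mathcal{B}_\delta$, and the degree at any such value is $\le -1$, contradicting degree zero; or $\det D^2u\equiv 0$ on that open set, hence $D^2u\equiv 0$ there, and you finish by the paper's open-and-closed argument. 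Note this repair still uses $D^2F>0$ on $\mathcal{B}_\delta(p_0)$ (implicit in the paper's proof as well), so the analyticity and approximation detours are unnecessary; your approximation paragraph is in any case circular, since transferring the gradient hypotheses from $u$ to $u_\eta$ presupposes exactly the gradient convergence you are trying to establish. (Minor slip: diagonalizing $D^2F$ forces the \emph{diagonal entries} of $D^2u$ in that basis, not its eigenvalues, to have opposite signs; the conclusion $\det D^2u=u_{11}u_{22}-u_{12}^2\le 0$ is nonetheless correct.)
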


\begin{proof}For simplicity, we take $p_0=0$.

In formula \eqref{weak}, let us choose $\textit{\textbf{e}}= e_k$
and as test function $\phi=\eta_k(\nabla u)$, with $\eta$ being 0
outside the ball $\mathcal{B}_\delta$.  Notice that $\phi$ is an
admissible test function in view of our hypothesis. We obtain,
\begin{equation*}
\int_{B_1} F_{ij}(\nabla u)u_{k j}(\eta_k(\nabla u))_i dx = 0.
\end{equation*} Hence differentiating and then summing over all
$k$'s, we get
\begin{equation}\label{inttrace}
\int_{B_1} Tr(D^2F D^2u D^2\eta D^2u) dx = 0.
\end{equation}

To estimate the integrand above, we use the characteristic
polynomial equation for a $2 \times 2$ matrix,
\begin{equation*}\label{char} A^2 -(TrA)A + (detA)I=0.
\end{equation*}
In particular, \begin{equation}\label{char2}TrA=0 \Rightarrow
A^2=-(det A) I.\end{equation}Thus, since $u$ solves the
Euler-Lagrange equation \eqref{EL}, we can apply this identity to
$A=D^2FD^2u$ to obtain,
\begin{align}\label{trace}Tr(D^2F D^2u D^2\eta D^2u)&= Tr((D^2F D^2u)^2D^2\eta
(D^2F)^{-1})\\ \nonumber & = det D^2F |det D^2u|
Tr((D^2F)^{-1}D^2\eta)
\end{align}
where in the first equality we used that
$$Tr(AB)=Tr(BA).$$
We choose $\eta$ so that
$$Tr((D^2F)^{-1}D^2\eta) >0 \quad \text{in $\mathcal{B}_\delta \setminus
\overline{\mathcal{B}}_{\eps/2}$}.$$ For example, for $\delta=1$
$$\eta(p)=
  \begin{cases}
    e^{-k|p|} - k e^{-k}|p| +(k-1)e^{-k}& \text{$|p| \leq 1$}, \\
    0 & \text{$|p|>1$},
  \end{cases}
$$ with $k$ a large enough constant depending on the ellipticity
constants for $D^2F$ on $\mathcal{B}_\delta.$ Combining
\eqref{inttrace} with \eqref{trace}, we get that
\begin{equation}
det D^2u = 0, \quad \text{on} \ (\nabla u)^{-1}(\mathcal{B}_\delta
\setminus \overline{\mathcal{B}}_{\eps/2})\cap B_1.
\end{equation}
Since $u$ solves the Euler-Lagrange equation we obtain that
$D^2u=0$ and $\nabla u$ is constant on the set above. Hence this
set is both open and closed in $B_1$, therefore it is either empty
(and we are done) or it coincides with $B_1$. In the latter case,
we conclude by continuity that also $\nabla u(\p B_1) \cap
\mathcal{B}_\delta \neq \emptyset$ and we reach a contradiction.

\end{proof}

Next we obtain a Caccioppoli-type inequality.

\begin{prop}\label{Cacc3}Let $u$ be a minimizer to $I(u)$ with
$\|\nabla u\|_{L^\infty(B_1)} \leq M.$ Then,
\begin{equation}\label{Caccineq3}
\int_{B_{1/2}\cap (\nabla u)^{-1}(O_{1/n})} |D^2u|^2 dx \leq C
\end{equation}
for some constant $C$ depending on $n, M, \|\nabla
F\|_{L^{\infty}(\mathcal{B}_{M})}$.
\end{prop}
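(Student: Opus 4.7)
The plan is to mimic the two-dimensional Cayley--Hamilton argument of Lemma~\ref{Onprel} by inserting a spatial cutoff $\xi$ and selecting an auxiliary function $\eta$ on $\R^2$ whose Hessian is positive enough on the region whose preimage we wish to control. First I would fix $\xi \in C_0^\infty(B_1)$ with $\xi\equiv 1$ on $B_{1/2}$, and a smooth $\eta:\R^2\to\R$ compactly supported in an open set $\tilde O\subset O_{1/(2n)}$ with $\overline{O_{1/n}} \subset \tilde O$ and $D^2\eta \geq I$ on $\overline{O_{1/n}}$; on the larger set $\tilde O$ we have $D^2F \geq \tfrac{1}{2n}I$, so $(D^2F)^{-1}$ is uniformly bounded on $\tilde O$.

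Differentiating \eqref{EL} in direction $e_k$ gives $\int F_{ij}(\nabla u) u_{kj}\,\varphi_i\, dx = 0$. I would test with $\varphi_k = \xi^2 \eta_k(\nabla u)$ and sum over $k$; the principal term becomes
\[
\int \mathrm{Tr}\bigl(D^2F(\nabla u)\cdot D^2u\cdot D^2\eta(\nabla u)\cdot D^2u\bigr)\,\xi^2\,dx.
\]
By the same Cayley--Hamilton manipulation as in Lemma~\ref{Onprel} (using $\mathrm{Tr}(D^2F\cdot D^2u)=0$), this equals $\int \det(D^2F)\,|\det D^2u|\,\mathrm{Tr}\bigl((D^2F)^{-1} D^2\eta(\nabla u)\bigr)\,\xi^2\,dx$. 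The choice of $\eta$ makes $\mathrm{Tr}((D^2F)^{-1}D^2\eta)$ uniformly positive on $\mathrm{supp}(\eta)$; moreover, smoothness of $F$ on $\overline{\mathcal B}_M$ provides an upper bound on $D^2F$, which together with the Euler--Lagrange constraint yields the equivalence $|\det D^2u|\sim|D^2u|^2$ on $\mathrm{supp}(\eta)$. Consequently the principal term dominates $c(n)\int_{B_{1/2}\cap(\nabla u)^{-1}(O_{1/n})}|D^2u|^2\,dx$.

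The main obstacle is the boundary remainder $2\int F_{ij}(\nabla u) u_{kj}\,\xi\xi_i\, \eta_k(\nabla u)\,dx$. The key identity $F_{ij}(\nabla u) u_{kj} = \partial_k[F_i(\nabla u)]$ permits integration by parts in direction $e_k$, converting this into expressions involving $F_i(\nabla u)$ (pointwise bounded by $\|\nabla F\|_{L^\infty(\overline{\mathcal B}_M)}$), $\nabla\xi$, and at most one factor of $D^2u$ supported on $(\nabla u)^{-1}(\mathrm{supp}(\eta))\subset (\nabla u)^{-1}(\tilde O)$. Young's inequality then yields a contribution of the form $\gamma \int_{(\nabla u)^{-1}(\tilde O)} \xi^2 |D^2u|^2\,dx + C_\gamma$; and since $|D^2u|^2\sim|\det D^2u|$ on $\tilde O$, this is in turn bounded by $\gamma'\cdot(\text{principal term}) + C_\gamma$ and absorbed for $\gamma'$ small. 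The sole technical delicacy is constructing $\eta$ smooth with $D^2\eta$ suitably positive on $\overline{O_{1/n}}$ and $\mathrm{Tr}((D^2F)^{-1}D^2\eta)\geq c>0$ throughout $\mathrm{supp}(\eta)$; this is possible because $\overline{O_{1/n}}$ sits with a strict spectral gap inside $\tilde O$. Taking $\xi\equiv 1$ on $B_{1/2}$ then yields the claimed estimate with $C=C(n,M,\|\nabla F\|_{L^\infty(\overline{\mathcal B}_M)})$.
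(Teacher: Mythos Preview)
Your overall strategy---test with $\xi^2\eta_k(\nabla u)$ and exploit the two-dimensional Cayley--Hamilton identity---is the right one, but the specific choice of auxiliary $\eta$ leaves a gap that does not close. The assertion that one can build a smooth, compactly supported $\eta$ with $\mathrm{Tr}\bigl((D^2F)^{-1}D^2\eta\bigr)\geq c>0$ throughout $\mathrm{supp}(\eta)$ is false: for $\eta\in C^2_c(\R^2)$ continuity forces $D^2\eta=0$ on $\partial(\mathrm{supp}\,\eta)$, so the trace vanishes there. Hence the principal term inevitably carries a region---namely $(\nabla u)^{-1}\bigl(\mathrm{supp}\,\eta\setminus O_{1/n}\bigr)$---on which $\mathrm{Tr}((D^2F)^{-1}D^2\eta)$ is negative, contributing a term of the form $-C\int_{(\nabla u)^{-1}(\tilde O\setminus O_{1/n})}\det(D^2F)\,|\det D^2u|\,\xi^2\,dx$ on the left. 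This lives on the \emph{wrong} gradient set and is not dominated by anything you have on either side, so the absorption you sketch cannot be completed.

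The paper avoids this entirely by taking $\eta=F$ itself, i.e.\ testing with $\phi=\xi^2 F_k(\nabla u)$. Then $\mathrm{Tr}((D^2F)^{-1}D^2\eta)=\mathrm{Tr}\,I=2>0$ \emph{everywhere}, so the left-hand side is globally nonnegative and bounds $\int_{(\nabla u)^{-1}(O_{1/n})}|D^2u|^2\,\xi^2$ from below with no bad region. Moreover, after integrating the remainder by parts one obtains a term $\int F_i(\nabla u)\,\xi\xi_i\,\partial_k(F_k(\nabla u))\,dx$, which vanishes identically by the Euler--Lagrange equation $\mathrm{div}(\nabla F(\nabla u))=0$; what survives involves only $\nabla F(\nabla u)$ and derivatives of $\xi$, giving $|RHS|\leq C(\|\nabla F\|_{L^\infty(\mathcal B_M)})$ with no $D^2u$ factor and hence no absorption step at all. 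This choice also explains why the constant depends only on $n$, $M$, and $\|\nabla F\|_{L^\infty(\mathcal B_M)}$: your equivalence ``$|\det D^2u|\sim|D^2u|^2$ on $\tilde O$'' and the lower bound on $\mathrm{Tr}((D^2F)^{-1}D^2\eta)$ both tacitly use an upper bound on $D^2F$, which the stated estimate does not allow.
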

\begin{proof} In formula \eqref{weak}, let us choose $\textit{\textbf{e}}= e_k$ and as test function $\phi= \xi^2
F_k(\nabla u)$, with $\xi \in C^\infty_0(B_1), 0 \leq \xi \leq 1$
and $\xi \equiv 1$ on $B_{1/2}$. We obtain, after summing over all
$k$'s,
\begin{equation}\label{Step1}
\int_{B_1} F_{ij}(\nabla u)u_{k j}\xi^2(F_k(\nabla u))_i dx = -2
\int_{B_1} F_{ij}(\nabla u) u_{k j} \xi \xi_i F_k(\nabla u) dx.
\end{equation} Now we analyze the left-hand side of
\eqref{Step1}. We use again \eqref{char2} for $A=D^2FD^2u$ and we
obtain
\begin{align*}
LHS&= \int_{B_1} F_{ij}(\nabla u)u_{k j}\xi^2 F_{kl}(\nabla
u)u_{li} dx = \int_{B_1} \xi^2 Tr((D^2F D^2u)^2)dx\\& = 2
\int_{B_1} \xi^2 (detD^2F)|det D^2u| dx.
\end{align*}
Moreover, again using \eqref{char2} we have that
$$|D^2u|^2\leq |(D^2F)^{-1}|^2|D^2FD^2u|^2 = |(D^2F)^{-1}|^2|det(D^2FD^2u)|$$
and hence,
$$det D^2F |detD^2u|\geq c|D^2u|^2 \quad \text{on} \ (\nabla
u)^{-1}(O_{1/n}),$$ for some constant $c$ depending on $n$. Thus,
\begin{equation}\label{finalbound} LHS \geq c \int_{B_{1/2}\cap (\nabla u)^{-1}(O_{1/n})} |D^2 u|^2 dx\end{equation}
with $c$ depending on $n$.

On the other hand, using integration by parts together with the
Euler-Lagrange equation $\p_k (F_k(\nabla u))=0$, we have that the
right-hand side of \eqref{Step1} is given by
\begin{align*}
RHS  & = - 2  \int_{B_1} \partial_k(F_j(\nabla u))\xi\xi_i
F_k(\nabla u)dx
\\& = 2 \int_{B_1} F_j(\nabla u)
\partial_k (\xi \xi_i) F_k(\nabla u)dx, \end{align*} and hence
\begin{equation} \label{Bound} |RHS| \leq C
\end{equation}
with $C$ depending on $\|\nabla F\|_{L^\infty(\mathcal{B}_M)}.$
Combining \eqref{Step1},\eqref{finalbound} and \eqref{Bound} we
obtain the desired claim.

\end{proof}

Finally, we are ready to exhibit the proof of the localization
Lemma \ref{On}.

\

\noindent \textbf{Proof of Lemma \ref{On}.} Let $\delta>0$ and
assume that there exist $x_0,x_1 \in B_\delta$ such that
$$|\nabla u (x_0)-p_0|\geq 4\rho , \quad |\nabla u (x_1)- p_0| <
3\rho .$$ Then, by the maximum principle for each $\delta \leq r
\leq 1/2$ there exists a point $x_0^r \in
\partial B_r$ such that $$|\nabla u (x_0^r)-p_0|\geq 4\rho.$$
Also, from Lemma \ref{Onprel}, for each $\delta \leq r \leq 1/2$
there exists a point $ x_1^r \in
\partial B_r$ such that $$|\nabla u
(x_1^r)- p_0| < 3\rho.$$

Let $\mathcal{G}$ be a smooth function on $\R^2$ with $|\nabla
\mathcal{G}| \leq 1/\rho$ such that $$\mathcal{G}(p)=
  \begin{cases}
    0 & \text{$|p-p_0|\geq 4\rho$}, \\
    1 & \text{$|p-p_0|\leq 3\rho$}.
  \end{cases}
$$
We proceed as in Lemma \ref{A2} to obtain,
\begin{equation}\label{Semifinal}
\int_{B_{1/2}} |\nabla (\mathcal{G}(\nabla u))|^2dx \geq
\frac{1}{2\pi}\ln \frac{1}{2\delta}.
\end{equation}
However, since $\mathcal{B}_{4\rho}(p_0) \cap
\overline{\mathcal{B}}_M \subset O_{1/n}$, by the definition of
$\mathcal{G}$ we get
\begin{align}\label{Final}
\int_{B_{1/2}} |\nabla (\mathcal{G}(\nabla u))|^2dx &\leq
\int_{B_{1/2}} |\nabla \mathcal{G}|^2 |D^2u|^2dx \leq
\frac{1}{\rho^2}\int_{B_{1/2} \cap (\nabla u)^{-1}(O_{1/n})} |D^2
u|^2dx.
\end{align}
Therefore, combining \eqref{Semifinal} with \eqref{Final}, we
obtain a contradiction to the Caccioppoli inequality
\eqref{Caccineq3} as long as $\delta$ is sufficiently small.

\qed

\

We now combine all the ingredients above and provide the proof of
Theorem 1.2.

\

\noindent\textbf{Proof of Theorem 1.2.} We wish to prove the
following claim:
$$\forall \eps >0, \exists \delta>0, \delta=\delta(\ep, O_{1/n},
M,\|\nabla F\|_{L^\infty(\mathcal{B}_M)}) \ \text{such that} \
diam\{\nabla u (B_\delta)\} \leq \eps$$ for any minimizer $u$ to
$I(u)$.

\

Fix $\eps>0$. Let $\overline{\mathcal{B}}_M \setminus \bigcup
O_{1/n} = \{q_1,\ldots,q_m\}$. We cover the set $\mathcal{B}_{2M}
\setminus \bigcup_{i=1}^m \mathcal{B}_{5\rho}(q_i)$ with a finite
number of balls $\mathcal{B}^k_\rho$ of radius $\rho$, with $\rho$
small, say $\rho=\eps/5$. Notice that $\mathcal{B}^k_{4\rho} \cap
\overline{\mathcal{B}}_M \subset O_{1/n}$, for a large $n$.

Clearly, since $\nabla u (B_1) \subset \overline{\mathcal{B}}_M$,
then $\nabla u (B_1) \cap \mathcal{B}^k_\rho=\emptyset$  for some
$\mathcal{B}^k_\rho \subset \mathcal{B}_{2M} \setminus
\overline{\mathcal{B}}_M$. Then, according to Lemma \ref{On} we
have that either
\begin{equation}\label{ballinclusion2}
\nabla u(B_\delta) \subset \mathcal{B}^k_{4\rho}
\end{equation}or
\begin{equation}\label{noninter2}
\nabla u(B_\delta) \cap \mathcal{B}^k_{3\rho} = \emptyset,
\end{equation}
for some $\delta>0.$ If \eqref{ballinclusion2} occurs, then we
reached our conclusion. Otherwise, if \eqref{noninter2} occurs, we
conclude that $\nabla u (B_\delta) \cap \mathcal{B}^j_\rho
=\emptyset$ for all $\mathcal{B}^j_\rho$ such that $
\mathcal{B}^j_\rho \cap \mathcal{B}^k_{\rho} \neq \emptyset.$
Hence we can apply again Lemma \ref{On} to the balls
$\mathcal{B}^j_\rho.$ We iterate this argument. If at some step we
reach the conclusion \eqref{ballinclusion2}, then we are done. If
at each step we reach the conclusion \eqref{noninter2}, then after
a finite number of steps (because our balls cover a connected
domain) we obtain that for some small $\tilde{\delta}$, $\nabla
u(B_{\tilde{\delta}}) \cap \mathcal{B}_\rho^k = \emptyset$ for all
$k$. Since $\nabla u(B_{\tilde{\delta}})$ is connected, we
conclude that $\nabla u(B_{\tilde{\delta}}) \subset
\mathcal{B}_{5\rho}(q_i)$ for some $i$. Again we reach the desired
conclusion.

\qed

\section{A degenerate obstacle problem}

\subsection{The statement of the problem and preliminaries.} Let $N$ be a convex (open) polygon in $\mathbb{R}^2$
with $n$ vertices $\{p_1,\ldots, p_n\} = \mathcal{P}$ (also, set
$p_{n+1}=p_1.$) Let $[p,q]$ be the closed segment joining two
points $p$ and $q$ and let $(p,q)$ be the open segment joining
them. Denote by $\nu_i$ a direction perpendicular to the side
$[p_i,p_{i+1}], i=1,\ldots, n$. Finally, let
$\mathcal{Q}=\{q_1,\ldots, q_m\}$ be a finite subset of $N$.

Let $F: \overline{N} \rightarrow \mathbb{R}$ be a convex function
such that \begin{enumerate}
\item $F \in C^2(N \setminus \mathcal{Q}), \quad D^2F >0$ on $N \setminus \mathcal{Q}$;
\item  $F$ is bounded.
\end{enumerate}

Let $\Omega \subset \mathbb{R}^2$ be a bounded Lipschitz domain,
and let $\varphi: \partial \Omega  \rightarrow \mathbb{R}$ be a
function that admits an extension $\tilde{\varphi}$ with $\nabla
\tilde{\varphi} \in \overline{N}.$

We consider the following problem $(P)$: minimize the functional
\begin{equation*} I(u)= \int_{\Omega} F(\nabla u)dx,
\end{equation*}among all Lipschitz competitors $u$, with
$u=\varphi$ on $\partial \Omega, \nabla u \in \overline{N}$ (we
think of $F$ to be equal to $+\infty$ outside $\overline{N}$.)

Denote by,
\begin{align*}
\underline{\varphi}= \inf \{v: v=\varphi \ \text{on} \ \partial
\Omega, \nabla v \in \overline{N}\};\\
\ \\\overline{\varphi}= \sup \{v: v=\varphi \ \text{on} \ \partial
\Omega, \nabla v \in \overline{N}\}.
\end{align*}
We refer to $\underline{\varphi},\overline{\varphi}$ as to
respectively, the lower obstacle and upper obstacle.

Also, in what follows we want to distinguish when gradients are
close or not to $\partial N$. One way of doing this is to consider
the compactification of $N$ with one point. We adopt a slightly
different approach by introducing the following function
\begin{equation}\label{H}
H:\overline{N} \to S^2\end{equation} such that, $H$ is continuous,
$\partial N$ is mapped to a point, and $H$ is a homeomorphism
between $N$ and $H(N)$.

\

\textbf{Remark.} Notice that our minimization problem is
equivalent to the following obstacle problem: minimizing
\begin{equation*} \widetilde{I}(u)= \int_{\Omega} \widetilde{F}(\nabla
u)dx,
\end{equation*}
 among all
competitors $v$ such that $\underline{\varphi} \leq v \leq
\overline{\varphi}$, with $\widetilde{F}$ a convex extension of
$F$ outside $\overline{N}$.

\qed

\subsection{The main results.} We state now our main results. Let $u$ be the unique minimizer to the
problem $(P)$.

Our first main result says that the minimizers are $C^1$ in
$\Omega$ except on a number of segments which have an end point on
$\partial \Omega$ and have directions perpendicular to the sides
of $N$. On these segments the minimizer coincides with one of the
lower or upper obstacle.

\

\textbf{Theorem 1.3}. \textit{If $\nabla u$ is discontinuous at
$x_0 \in \Omega$, then there exists a direction $\nu_i$
perpendicular to one of the sides $[p_i,p_{i+1}]$ such that $u$ is
linear on a segment of direction $\nu_i$ connecting $x_0$ and
$\partial \Omega$. Precisely, there exists $x_1=x_0 + t_1 \nu_i
\in \partial \Omega$ such that
$$u(x) = u(x_0)+ p_i \cdot (x-x_0), \quad \text{for all} \ x \in [x_0,x_1] \subset \overline{\Omega}.$$
In particular, $u \in C^1$ away from the obstacles.}

\

Our second main result says that if a sequence of points converges
to a point of non-differentiability of $u$ then their
corresponding gradients approach $\partial N$. Let $H$ be as in
\eqref{H}; the precise statement of our result reads as follows.

\begin{thm}\label{continuous}$H(\nabla u)$ is continuous in $\Omega$.\end{thm}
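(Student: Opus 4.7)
The plan is to analyze $H(\nabla u)$ point by point, using that $H$ is continuous on $\overline{N}$, collapses $\partial N$ to a single point, and is a homeomorphism on $N$. Consequently $H(\nabla u)$ can fail to be continuous at some $x_0 \in \Omega$ only if (i) $\nabla u$ is itself discontinuous at $x_0$, and (ii) some approaching sequence $y_k \to x_0$ satisfies $\nabla u(y_k) \to q$ with $q$ in the open polygon $N$. I will show that (ii) is impossible. Combined with Theorem \ref{C1}, this forces $H(\nabla u)(x_0) = H(\partial N)$ at every discontinuity, giving the required continuous extension of $H(\nabla u)$ to all of $\Omega$.

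At any $x_0$ where $\nabla u$ is continuous there is nothing to do. So fix $x_0$ where $\nabla u$ is discontinuous. Theorem \ref{C1} supplies a direction $\nu_i$, a vertex $p_i \in \mathcal{P} \subset \partial N$, and a segment through $x_0$ (reaching $\partial\Omega$) along which $u$ is linear with $\nabla u \equiv p_i$. It remains to show that for every sequence $y_k \to x_0$ of differentiability points of $u$ one has $\mathrm{dist}(\nabla u(y_k),\partial N)\to 0$.

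Suppose for contradiction a subsequence satisfies $\nabla u(y_k) \to q$ with $q$ in the open polygon $N$. Since $q \neq p_i$ and $p_i \in \partial N$, one can place a closed slab $\overline{S_{\e}(c_0,c_1)}$ in the gradient plane separating $q$ from $p_i$ and compactly contained in $N \setminus \mathcal{Q}$. On this compact set $F$ is smooth and $D^2 F \geq \lambda I$ for some $\lambda > 0$, so $S_{\e}(c_0,c_1) \subset O_\lambda$ for a suitable open set $O_\lambda \subset \{D^2 F > \lambda I\}$. Applying Lemma \ref{A1} to $u$ on a small ball $B_r(x_0)$ (rescaled to $B_1$) produces $\delta > 0$ such that $\nabla u(B_\delta(x_0))$ lies entirely in one of the half-planes $H^+_{\e}(c_0)$ or $H^-_{\e}(c_1)$. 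The first option excludes $p_i$ (since $p_i\cdot\e < c_0$) while the second excludes $q$ (since $q\cdot\e > c_1$). But $\nabla u(B_\delta(x_0))$ contains $p_i$ (on the singular segment of Theorem \ref{C1}, arbitrarily close to $x_0$) and contains values arbitrarily close to $q$ (at $y_k$ for $k$ large), so both half-plane options are violated --- contradiction.

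The main obstacle is applying Lemma \ref{A1} cleanly in the present setting: Lemma \ref{A1} is proved for smooth minimizers of a smooth strictly convex $F$ on $\mathbb R^2$, whereas here $u$ may coincide with an obstacle (breaking the pointwise Euler--Lagrange equation) and $\nabla u$ may reach $\partial N$ where $F$ is not smooth. The approximation scheme of Section 5 is designed for exactly this: one smooths $F$ to strictly convex $F_\epsilon$ on all of $\mathbb R^2$, obtains smooth unconstrained minimizers $u_\epsilon \to u$, and applies Lemma \ref{A1} to $u_\epsilon$. Since the chosen slab is compactly contained in $N \setminus \mathcal{Q}$, the ellipticity bound $D^2 F_\epsilon \geq (\lambda/2) I$ on the slab and the constant $\|\nabla F_\epsilon\|_{L^\infty(\mathcal{B}_M)}$ are uniform in $\epsilon$, so the resulting $\delta$ is uniform in $\epsilon$. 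Passing $\epsilon \to 0$ using locally uniform convergence of $\nabla u_\epsilon$ to $\nabla u$ away from $\partial N$ transfers the slab-avoidance conclusion to $u$ and closes the argument.
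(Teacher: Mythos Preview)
Your reduction is sound --- $H(\nabla u)$ can fail to be continuous at $x_0$ only if some subsequential limit of $\nabla u(y_k)$ lies in the open polygon $N$ --- but the mechanism you propose for ruling this out has a genuine gap. A slab $S_{\e}(c_0,c_1)$ is an unbounded strip and so cannot be ``compactly contained in $N\setminus\mathcal Q$''. More to the point, any line separating an interior point $q\in N$ from a vertex $p_i\in\partial N$ of the bounded convex polygon must itself cross $\partial N$, so $S_{\e}(c_0,c_1)\cap\overline N$ always touches $\partial N$. To apply Lemma~\ref{A1} to the approximations $u_m$ with a $\delta$ uniform in $m$ you would need $D^2F_m\ge\lambda I$ on $S_{\e}(c_0,c_1)\cap\nabla u_m(B_r)$ for a fixed $\lambda$; but $\nabla u_m(B_r)$ is only known to lie in a neighborhood of $\overline N$ (Proposition~\ref{compact}), and the hypotheses on $F$ give no lower bound on $D^2F$ as $p\to\partial N$, so no uniform $\lambda$ is available where the strip meets $\partial N$. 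In addition the $\delta$ in Lemma~\ref{A1} depends on $\|\nabla F_m\|_{L^\infty}$ over the whole gradient image (the integration by parts in Proposition~\ref{Cacc1} brings in $F_i(\nabla u_m)$ globally), and this is not uniformly bounded since $F_m=C_m\overline F$ with $C_m\to\infty$ just outside $N$. This is precisely why the paper's localization Theorem~\ref{localization} is built on balls $\mathcal B_{4r}(p)\Subset N\setminus\mathcal Q$ rather than strips: its test function $\eta$ is compactly supported inside $N$, so all constants depend only on $F$ restricted to a fixed compact in $N\setminus\mathcal Q$.

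Two further points. Theorem~\ref{C1} asserts that $u$ is linear \emph{along} a one-dimensional segment with value $u(x_0)+p_i\cdot(x-x_0)$; this determines only the directional derivative $\partial_{\nu_i}u=p_i\cdot\nu_i$ on the segment, not the full gradient, so the claim that $\nabla u$ (or $\nabla u_m$) attains values near $p_i$ in $B_\delta(x_0)$ is not justified. And Proposition~\ref{convergence} gives $\nabla u_m\to\nabla u$ only in measure on the set where $\nabla u\in N$, not the locally uniform convergence you invoke. The paper's proof in Section~7 proceeds quite differently: it first shows (via Theorem~\ref{localization} and Proposition~\ref{segments}) that discontinuity of $H(\nabla u)$ forces a singular segment for \emph{every} side of $N$, and then carries out a sector-by-sector geometric analysis of the wedges between consecutive segments, constructing in each an auxiliary minimizer $\tilde u$ to which the localization theorem applies; the triangle case $n=3$ needs a separate boundary argument.
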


Minimizers of $(P)$ may develop ``flat" regions where they are
linear and their gradient belongs to the set of vertices of $N$.
The next theorem describes the shape of these regions.

\begin{thm}\label{convex} Let $p_i$ be a vertex of $N$ and let $\omega_i$ be a direction
so that
$$\omega_i \cdot(p -p_i) >0 \quad \forall p\in \overline{N}
\setminus \{ p_i\}.$$ Let $S$  be the interior of the set $\{u(x)=
c + p_i\cdot x\}$ and assume that $S \neq \emptyset.$ Then,
$\partial S \cap \Omega$ consists of a convex graph (by above) and
a concave graph (by below)in the $\omega_i$ direction.
\end{thm}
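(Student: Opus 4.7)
The plan is to reformulate the theorem as a convexity statement about the super/sub-level sets of $v := u - L$, where $L(x) = c + p_i \cdot x$, and then establish this convexity via a variational argument combined with the $C^1$ structure provided by Theorem \ref{C1}. First I would observe that $\partial_{\omega_i} v = (\nabla u - p_i) \cdot \omega_i \geq 0$ a.e., with equality exactly on $\{\nabla u = p_i\}$; since $p_i$ is a vertex of $N$, the polar cone $C^* := \{\omega : \omega \cdot (p - p_i) \geq 0 \ \forall\, p \in \overline N\}$ is a proper $2$D cone containing $\omega_i$ in its interior, and $v$ is monotone non-decreasing along every direction of $C^*$. In coordinates $(y,t)$ with $t = x \cdot \omega_i$, each non-empty vertical slice $\overline S \cap \{y = y_0\}$ is therefore a closed interval $[f^-(y_0), f^+(y_0)]$, so that $\partial^\pm S \cap \Omega$ are the graphs of $f^\pm$. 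The theorem thus reduces to showing that the component $A^+$ of $\{v > 0\}$ adjacent to $\partial^+ S$ from above is convex (equivalently $f^+$ is convex), and the component $A^-$ of $\{v < 0\}$ adjacent to $\partial^- S$ from below is convex (equivalently $f^-$ is concave).

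The central tool is a Jensen/extreme-point lemma: if $T \Subset \Omega$ is a bounded open set with $u = L$ on $\partial T$, then $u \equiv L$ on $T$. Indeed, $L$ (with $\nabla L = p_i \in \overline N$) is admissible on $T$, so minimality gives $\int_T F(\nabla u)\,dx \leq |T|\,F(p_i)$, while the divergence theorem combined with $u = L$ on $\partial T$ yields $|T|^{-1}\int_T \nabla u\,dx = p_i$, and Jensen's inequality gives $\int_T F(\nabla u)\,dx \geq |T|\,F(p_i)$. Equality in Jensen pins the distribution of $\nabla u|_T$ to the contact set of $F$ with some supporting affine function at $p_i$; the strict convexity of $F$ on $N \setminus \mathcal{Q}$ locates this contact set inside $\partial N$, and the extremality of $p_i$ as a vertex of $\overline N$ then forces $\nabla u \equiv p_i$ a.e., hence $u \equiv L$ on $T$. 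As a direct consequence, every connected component of $\{v > 0\} \cap \Omega$ and every connected component of $\{v < 0\} \cap \Omega$ must reach $\partial \Omega$.

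With these ingredients, I would argue convexity of $A^+$ by combining the cone monotonicity with Theorem \ref{C1}. On the smooth portions of $\partial^+ S$ (where $\nabla u$ is continuous across), $u$ satisfies a uniformly elliptic Euler--Lagrange equation on a one-sided neighborhood and $\partial^+ S$ is a level set of $v$; local convexity then comes from standard level-set geometry. At the possibly singular points of $\partial^+ S$, Theorem \ref{C1} forces the discontinuity of $\nabla u$ to occur along a segment in one of the $\nu_j$-directions normal to a side of $N$, and the dual-cone geometry at $p_i$ together with the monotonicity in $C^*$ ensures such a kink opens into $A^+$, producing a convex corner. Globalizing these local convexity statements gives convexity of $A^+$, and the argument for $A^-$ is symmetric under $\omega_i \mapsto -\omega_i$. \emph{The main obstacle} is precisely this globalization: cone monotonicity by itself is insufficient (one can write down cone-monotone but non-convex sets whose complementary components all touch $\partial \Omega$), so the proof must carefully couple the $C^1$ structure of $u$ with the Jensen/extreme-point lemma applied to sets built by sliding $L$ by small amounts $L + \delta$, in order to rule out non-convex free-boundary configurations and match the smooth arcs of $\partial^\pm S$ to the $\nu_j$-directed kinks.
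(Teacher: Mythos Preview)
Your proposal is far more elaborate than the paper's proof, and it has a real gap where you yourself flag the ``main obstacle.'' The paper's argument is a short barrier comparison that bypasses everything you set up after the cone-monotonicity observation: after normalizing $p_i=0$, $c=0$, $\omega_i=e_2$ (so $v=u$), one assumes the lower graph is not concave, picks two points $z_1,z_2$ with $u(z_i)<0$ such that the chord $[z_1,z_2]$ is tangent to $S$ from below, and compares $u$ with the linear function $l$ that vanishes on the line through $z_1+\eps e_2$, $z_2+\eps e_2$ and has $e_2$-slope $\eps$. On the boundary of the vertical strip $\{z_1\cdot e_1<x_1<z_2\cdot e_1\}\cap\{l<0\}\cap\Omega$ one checks $u\le l$ for $\eps$ small; the comparison principle for minimizers then gives $u\le l$ inside, hence $u<0$ below $[z_1,z_2]+\eps e_2$, contradicting tangency. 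No $C^1$ regularity, no Jensen lemma, no analysis of kinks is needed.

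Your route, by contrast, does not close. The step ``local convexity then comes from standard level-set geometry'' on the smooth arcs of $\partial^+S$ is not a standard fact: uniform ellipticity of the Euler--Lagrange equation on one side of a level set of a $C^1$ solution does not, by itself, give any sign for the curvature of that level set. The subsequent plan for the singular points (kinks along $\nu_j$ directions opening into $A^+$) is plausible geometry but is neither proved nor sufficient to globalize, as you acknowledge. Your Jensen/extreme-point lemma is correct and interesting, but the conclusion you draw from it (components of $\{v\gtrless 0\}$ reach $\partial\Omega$) is not the missing ingredient for convexity. The idea you are groping for---``sliding $L$ by small amounts''---is exactly the tilted-linear-barrier the paper uses; once you tilt $L$ by $\eps e_2$ and compare in the right strip, the contradiction is immediate and the whole $C^1$ apparatus becomes unnecessary.
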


Finally, the next two results deal with the case when $u$ is a
perturbation of a linear function. They will be used in the proof
of our main Theorem 1.3.

\begin{thm}\label{Flat1}Assume $B_1 \subset \Omega$ and $$\mathcal{B}_{\delta}(p_0) \subset
N.$$Then, there exists $\eps$ depending on $\delta, p_0, F,$ such
that if $$|u - p_0 \cdot x|\leq \eps, \quad x \in B_1$$ then $u
\in C^1(B_{1/2})$ and
$$\nabla u (B_{1/2}) \subset \mathcal{B}_{\delta}(p_0).
$$
\end{thm}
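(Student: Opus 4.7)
My plan is to reduce to an unconstrained smooth uniformly convex problem via compactness and linearization. The main content of the theorem is the inclusion $\nabla u(B_{1/2}) \subset \mathcal{B}_\delta(p_0)$: once this holds, the obstacle constraint $\nabla u \in \overline{N}$ is strictly inactive in $B_{1/2}$, and because $\overline{\mathcal{B}}_\delta(p_0) \subset N \setminus \mathcal{Q}$ is compact, $F$ is $C^2$ with $\lambda I \le D^2 F \le \Lambda I$ there for some $0 < \lambda \le \Lambda$ depending only on $\delta, p_0, F$. Consequently $\overline{\mathcal{B}}_\delta(p_0) \subset O_\lambda \cap V_\Lambda$, and applying Theorem \ref{Main} to $u$ in $B_{1/2}$ yields a uniform modulus of continuity for $\nabla u$, hence $u \in C^1(B_{1/2})$.

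I would establish the inclusion by contradiction. Suppose there exist $\eps_k \to 0$ and minimizers $u_k$ with $|u_k - p_0 \cdot x| \le \eps_k$ in $B_1$ but with $\nabla u_k$ taking a value outside $\mathcal{B}_\delta(p_0)$ at some point of $B_{1/2}$. Rescale $v_k := (u_k - p_0 \cdot x)/\eps_k$, so $|v_k| \le 1$ in $B_1$. Substituting $u_k = p_0 \cdot x + \eps_k v_k$ into the Euler--Lagrange equation \eqref{EL} and Taylor expanding $\nabla F$ around $p_0$ (valid wherever $\nabla u_k$ is close to $p_0$) formally linearizes the PDE to the constant-coefficient uniformly elliptic equation
$$\text{div}(D^2 F(p_0)\, \nabla v) = 0.$$
A subsequential limit $v$ of $v_k$ should satisfy this linear equation in $B_1$ with $|v| \le 1$, so classical interior $C^{1,\alpha}$ theory gives $\|\nabla v\|_{L^\infty(B_{3/4})} \le C$. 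Translating back, $|\nabla u_k - p_0| = \eps_k |\nabla v_k| \le C \eps_k$ in $B_{3/4}$ for large $k$, contradicting the assumption.

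The main obstacle is justifying the compactness of $v_k$ and the limit equation, because a priori $\nabla u_k$ could drift toward $\partial N$ or toward a point of $\mathcal{Q}$ on a subregion of $B_1$, where the Euler--Lagrange equation degenerates and the obstacle may become active. To confine $\nabla u_k$, I would first run a barrier argument localized near $p_0$: using the second-order Taylor expansion of $F$ at $p_0$, one compares $u_k$ with sub- and supersolutions of the frozen equation $\text{div}(D^2 F(p_0) \nabla w) = 0$ built from quadratic polynomials, deducing $\nabla u_k(B_{1-\sigma}) \subset \mathcal{B}_{\delta/2}(p_0)$ for a fixed $\sigma > 0$ as soon as $\eps_k$ is small. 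In this region $F$ is smooth and uniformly convex, the obstacle is inactive, and the Caccioppoli inequality (Proposition \ref{Cacc1}) applied to the derivatives of $v_k$, which satisfy the linear equation $\partial_i(F_{ij}(\nabla u_k) v_{k,j}) = 0$ with coefficients uniformly elliptic on $\mathcal{B}_{\delta/2}(p_0)$, combined with De Giorgi's H\"older estimate, yields uniform interior $C^{0,\alpha}$ bounds on $\nabla v_k$. These bounds supply the compactness needed to pass to the limit and close the contradiction.
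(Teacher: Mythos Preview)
Your proposal has a genuine gap at its crucial step, and the overall architecture differs substantially from the paper's argument.

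\textbf{The gap.} The entire content of the theorem is the gradient confinement $\nabla u(B_{1/2}) \subset \mathcal{B}_\delta(p_0)$; everything else is indeed routine. You acknowledge this, but then propose to obtain the slightly weaker confinement $\nabla u_k(B_{1-\sigma}) \subset \mathcal{B}_{\delta/2}(p_0)$ via ``sub- and supersolutions of the frozen equation built from quadratic polynomials.'' This is precisely the step that is not justified. Comparison with quadratic barriers controls the \emph{values} of $u_k$, not its pointwise gradient; to extract a gradient bound you would need a touching point, and there is no mechanism forcing one. Moreover, once you have this confinement the compactness/linearization machinery is superfluous: you are already done up to replacing $\delta/2$ by $\delta$. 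So the proposal is circular in the sense that the hard part is deferred to an unproved barrier claim, and the remaining blow-up argument does no real work.

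There are two secondary issues. First, you work directly with the obstacle minimizer $u$ and invoke the Euler--Lagrange equation, but in this setting $u$ need not satisfy $\mathrm{div}(\nabla F(\nabla u))=0$ where the constraint $\nabla u \in \overline{N}$ is active; the paper avoids this by passing to the smooth approximations $u_m$ of Section~5. Second, you assume $\overline{\mathcal{B}}_\delta(p_0) \subset N \setminus \mathcal{Q}$, but the hypothesis only gives $\mathcal{B}_\delta(p_0) \subset N$; the singular set $\mathcal{Q}$ may well meet the ball, so $D^2F(p_0)$ may not exist and the ``frozen'' equation is undefined.

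\textbf{What the paper does instead.} The paper's argument (Proposition~6.2) is genuinely two-dimensional and topological rather than perturbative. Working with the smooth $u_m$, it fixes a ball $\mathcal{B}_{2\rho}(p_1) \subset N$ with $F \in C^2$ on its closure and $p_0 \notin \mathcal{B}_{2\rho}(p_1)$, and shows by contradiction that $\nabla u_m(B_{1/2}) \cap \mathcal{B}_\rho(p_1) = \emptyset$. If some $x_0$ had $\nabla u_m(x_0)=p_2 \in \mathcal{B}_\rho(p_1)$, Theorem~\ref{components} (the saddle/four-components lemma) forces $\{u_m < l_m\}$ to have at least two components near $x_0$, and the flatness hypothesis traps one of them in the $O(\eps)$-thin strip $\{|(x-x_0)\cdot(p_0-p_2)| \le 2\eps\}$. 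An explicit curved barrier $w=\delta' (e^{k(x_2-20x_1^2)}-c)$, which is a strict subsolution because its gradient stays in $\mathcal{B}_\rho(0)$, is then compared with $u_m$ on the complement of the ``large'' component; the minimum of $u_m-w$ is forced onto the boundary of that component, and monotonicity of $w$ in $x_2$ contradicts the existence of the thin component. Having excluded all such balls $\mathcal{B}_\rho(p_1)$, the localization Theorem~\ref{localization} together with the observation $\nabla u_m(B_{2\sqrt{\eps}}) \cap \mathcal{B}_{2\sqrt{\eps}}(p_0) \neq \emptyset$ finishes the proof. No linearization or compactness is used.
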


\begin{thm}\label{Flat2} Assume $B_1 \subset \Omega$ and let $p_0 \in \partial N.$ Then, there
exists $\eps$ depending on $\delta, F$, such that if
$$|u - p_0\cdot x| \leq \eps, \quad x\in B_1$$ then
$$[\nabla u(x), p_0] \quad \text{is in a $\delta$-neighborhood of $\partial N$, for a.e $x \in B_{1/2}.$}$$
\end{thm}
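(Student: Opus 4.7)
The plan is to combine a subgradient energy inequality at $p_0$ with the interior H\"older regularity for $\nabla u$ inside $N\setminus\mathcal{Q}$, arguing by contradiction. If the theorem fails at some $\delta>0$, then one can produce minimizers $u_k$ with $|u_k-p_0\cdot x|\le\eps_k\downarrow 0$ on $B_1$ and Lebesgue points $x_k\in B_{1/2}$ of $\nabla u_k$ satisfying $[\nabla u_k(x_k),p_0]\not\subset(\partial N)_\delta$. Since $\nabla u_k\in\overline{N}$ is bounded, Arzel\`a--Ascoli gives $u_k\to p_0\cdot x$ uniformly along a subsequence and $x_k\to x_\infty\in\overline{B}_{1/2}$.

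The key analytic input is a subgradient inequality at $p_0$. I would pick $\xi_0\in\partial F(p_0)$, viewing $F$ as convex on $\R^2$ with $F=+\infty$ outside $\overline{N}$, and define
\begin{equation*}
G(p):=F(p)-F(p_0)-\xi_0\cdot(p-p_0)\ge 0.
\end{equation*}
Strict convexity of $F$ in $N\setminus\mathcal{Q}$ together with the piecewise-linear structure on $\partial N$ furnishes a quantitative modulus $g(\delta)>0$ such that $G(p)\ge g(\delta)$ whenever $[p,p_0]\not\subset(\partial N)_\delta$, obtained by evaluating strict convexity at the midpoint of $[p,p_0]$ (which lies in the interior of $N$ at distance $\gtrsim\delta$ from $\partial N$, and can be kept away from $\mathcal{Q}$ after harmlessly shrinking $\delta$). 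On the other hand, integration by parts gives $|\int_{B_{3/4}}(\nabla u_k-p_0)\,dx|=O(\eps_k)$, and testing the minimality of $u_k^\sigma$ for the smoothly-approximated functional $\int F^\sigma(\nabla u)\,dx$ from Section~5 against a competitor that equals $p_0\cdot x+c_k$ on $B_{3/4-\eps_k}$ and transitions to $u_k^\sigma$ in a strip of width $\sim\eps_k$ near $\partial B_{3/4}$ yields $\int F^\sigma(\nabla u_k^\sigma)\le F^\sigma(p_0)|B_{3/4}|+O(\eps_k)$. Passing $\sigma\to 0$ then gives
\begin{equation*}
\int_{B_{3/4}} G(\nabla u_k)\,dx\le C\eps_k,\qquad\text{hence}\qquad\bigl|\{x\in B_{3/4}:[\nabla u_k(x),p_0]\not\subset(\partial N)_\delta\}\bigr|\le\frac{C\eps_k}{g(\delta)}.
\end{equation*}

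To upgrade this measure estimate to the a.e.\ pointwise statement, observe that the set of bad gradients $\{p\in\overline{N}:[p,p_0]\not\subset(\partial N)_\delta\}$ is compactly contained in $N\setminus\mathcal{Q}$. There $D^2 F$ is uniformly positive, so the Euler--Lagrange equation is uniformly elliptic on the preimage of the bad set and Theorem~\ref{Main2} (equivalently, De Giorgi--Nash applied to the linearization) delivers a uniform H\"older modulus for $\nabla u_k$ on such preimages. Consequently, at any Lebesgue point $x_k$ with $\nabla u_k(x_k)$ in the bad set, $\nabla u_k$ remains in the bad set throughout a ball $B_{r_0}(x_k)\subset B_{3/4}$ of a uniform radius $r_0=r_0(\delta,F)>0$; this forces $|(\nabla u_k)^{-1}(\text{bad set})\cap B_{3/4}|\ge c_0 r_0^2$, contradicting the vanishing measure estimate above for $\eps_k$ small enough.

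The main obstacle will be the competitor construction in the energy comparison: at $p_0\in\partial N$ the rigidity of $\nabla w\in\overline{N}$ makes a direct gluing delicate, since a thin transition can push the gradient outside $\overline{N}$. This is precisely why the smooth approximation $F^\sigma$ of Section~5 is indispensable — one carries out the competitor construction for the unconstrained approximated functional, where $F^\sigma$ is finite on $\R^2$, and recovers the inequality for $u$ by passing to the limit $\sigma\to 0$.
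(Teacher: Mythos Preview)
Your argument has a genuine gap in the pointwise upgrade step. The claim that the bad set $\{p\in\overline N:[p,p_0]\not\subset(\partial N)_\delta\}$ is compactly contained in $N\setminus\mathcal Q$ is false: take $N$ a square, $p_0$ the midpoint of one side, and $p$ the midpoint of the opposite side; then $[p,p_0]$ crosses the center of $N$, so $p$ is bad, yet $p\in\partial N$. In general any $p\in\partial N$ not lying on (or adjacent to) the side of $p_0$ is bad. Since the bad set touches $\partial N$, where $F$ need not even be $C^2$ and the equation is fully degenerate, no uniform ellipticity is available and neither Theorem~\ref{Main2} nor De Giorgi--Nash yields a H\"older modulus for $\nabla u_k$ near $x_k$. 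There is a second, related problem: Theorems~\ref{Main} and~\ref{Main2} are \emph{a~priori} estimates for smooth minimizers of $\int F(\nabla u)$ with $F$ globally smooth and strictly convex; their proofs differentiate the Euler--Lagrange equation. The obstacle minimizers $u_k$ are not known to be $C^1$ (that is Theorem~1.3, whose proof \emph{uses} the present theorem), and $\nabla u_k$ may hit $\partial N$ arbitrarily close to $x_k$, so you cannot localize to a uniformly elliptic regime around $x_k$. Without this, the measure bound $|\{\text{bad}\}|\le C\eps_k/g(\delta)$ cannot be upgraded to an a.e.\ statement.

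A smaller issue: the competitor for the energy estimate has gradient leaving $\overline N$ in the transition strip, so it is inadmissible for $(P)$. The approximation of Section~5 does not fix this, because there $F_m=C_m\overline F$ with $C_m\to\infty$ outside a shrinking neighborhood of $\overline N$, so $\int F_m(\nabla v_k)$ on the strip is not $O(\eps_k)$ uniformly in $m$. The paper proceeds very differently: it works with the smooth approximations $u_m$ throughout and combines two ingredients. First, the localization Theorem (plus the argument of Proposition~\ref{flat1}) forces $\nabla u_m(B_{3/4})\subset\mathcal N_\delta$ once $\eps$ is small. Second, to get the \emph{segment} conclusion at a point (say $\nabla u_m(x_0)=0$, $p_0=\alpha e_2$), the level-set topology (Theorem~\ref{components}) produces a thin component of $\{u_m<0\}$ trapped in a narrow horizontal strip; comparing $u_m$ with the explicit barrier $w=\tfrac{\alpha}{2}(x_2-20x_1^2)+2\eps$ on the complement of the ``fat'' component then forces an interior minimum at some $z_0$ with $\nabla u_m(z_0)=\nabla w(z_0)\approx p_0/2$. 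Since $\nabla u_m(z_0)\in\mathcal N_\delta$, this gives $p_0/2\in\mathcal N_{2\delta}$ and hence $[0,p_0]\subset\mathcal N_{4\delta}$. This barrier/topology mechanism replaces your regularity step and is what makes the argument go through at the boundary of $N$.
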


We finish this section by showing that minimizers to the problem
$(P)$ are unique. We use the abbreviation L.P. for Lebesgue point.

\begin{prop} The minimization problem $(P)$ admits a unique solution.
\end{prop}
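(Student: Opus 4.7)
\emph{Existence.} The plan is to apply the direct method. The admissible class
\[
\mathcal{A} := \{u \in W^{1,\infty}(\Omega) : u = \varphi \text{ on } \partial\Omega, \ \nabla u \in \overline{N} \text{ a.e.}\}
\]
is non-empty (it contains $\widetilde\varphi$), convex (since $\overline N$ is convex), and weakly-$*$ sequentially closed in $W^{1,\infty}(\Omega)$. The functional $I$ is convex in $u$ (as $F$ is convex) and bounded below (as $F$ is bounded on $\overline N$), hence weakly-$*$ lower semicontinuous on $\mathcal{A}$. Any minimizing sequence is uniformly Lipschitz and, passing to a subsequence, converges weakly-$*$ to a limit in $\mathcal{A}$ that realizes $\inf_{\mathcal{A}} I$.

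\emph{Uniqueness, main line.} Suppose $u_1, u_2 \in \mathcal{A}$ are both minimizers. Convexity of $\mathcal{A}$ and $I$ put the midpoint $w := \tfrac12(u_1+u_2)$ in $\mathcal{A}$ with
\[
I(w) \le \tfrac12\bigl(I(u_1)+I(u_2)\bigr) = \min_{\mathcal{A}} I,
\]
so minimality of $w$ forces equality. Integrating the pointwise inequality from convexity of $F$, one deduces
\[
F\!\bigl(\tfrac12(\nabla u_1(x)+\nabla u_2(x))\bigr) = \tfrac12\bigl(F(\nabla u_1(x))+F(\nabla u_2(x))\bigr) \quad \text{for a.e.\ } x \in \Omega,
\]
which says that $F$ is affine on the segment $[\nabla u_1(x), \nabla u_2(x)]$ at a.e.\ Lebesgue point. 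Hypothesis~(1) together with finiteness of $\mathcal{Q}$ yields that $F$ is strictly convex on the open polygon $N$: any affine segment in $\overline{N}$ whose interior met $N$ would contain a point of $N \setminus \mathcal{Q}$, violating $D^2F > 0$. Consequently any such affine segment must lie in a single closed edge $[p_i, p_{i+1}]$ of $\partial N$, and we conclude that, a.e., either $\nabla u_1(x) = \nabla u_2(x)$ or both gradients lie on a common edge of $\partial N$ and hence share the same $\nu_i$-component.

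\emph{Main obstacle.} The residual task, and the principal technical difficulty, is to rule out a positive-measure set $E_i$ on which both gradients lie on edge $[p_i,p_{i+1}]$ but differ. Set $h := u_1 - u_2 \in W^{1,\infty}_0(\Omega)$; the previous step shows that $\nabla h(x)$ lies a.e.\ in the finite union $\{0\} \cup \bigcup_i \R\tau_i$, where $\tau_i$ denotes the tangent direction to edge $i$, and in particular $\partial_{\nu_i} h = 0$ wherever $\nabla h$ is a multiple of $\tau_i$ or zero. I would finish by a sliding/integration argument along lines parallel to each $\nu_i$ connecting interior points to $\partial\Omega$ (where $h$ vanishes), iterating over the finitely many edges of $N$ to force $h \equiv 0$ and thus $u_1 = u_2$. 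The delicate point is controlling how $\nabla h$ switches between the different tangent directions $\tau_i$ across the exceptional set, for which a careful measure-theoretic analysis via Lebesgue points of $\nabla u_1, \nabla u_2$ is required; strict convexity of $F$ on $N$ enters only to reduce matters to this step.
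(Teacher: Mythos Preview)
Your reduction up to the dichotomy ``$\nabla u_1(x)=\nabla u_2(x)$ or both lie on a common edge'' is correct and matches the paper. The gap is in the final step: knowing only that $\nabla h(x)$ lies a.e.\ in $\{0\}\cup\bigcup_i\R\tau_i$ with $h\in W^{1,\infty}_0(\Omega)$ does \emph{not} force $h\equiv 0$, and your sketched ``sliding along $\nu_i$'' argument cannot close because on a line in direction $\nu_i$ the derivative $\partial_{\nu_i}h$ vanishes only on the (a~priori unknown) set $E_i$, while on $E_j$ with $j\ne i$ it picks up a nonzero $\tau_j\cdot\nu_i$ contribution. You yourself flag the switching between edges as the delicate point, and it is exactly here that the pointwise information you have extracted is insufficient.

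The paper supplies the missing idea: a \emph{translation trick}. If $u_1(0)-u_2(0)=\varepsilon>0$, then for every small $\tau$ the function $v_\tau(x)=\min\{u_1(x),\,u_2(x+\tau)+\varepsilon/2\}$ is again a minimizer (since $\{v_\tau<u_1\}\Subset\Omega$ and the functional is translation invariant). Repeating the midpoint argument with $u_1$ and $v_\tau$ now compares $\nabla u_1(x_1)$ with $\nabla u_2(x_2)$ for \emph{any} pair $x_1,x_2$ in a small ball, and iterating yields that $\nabla u_1(x_1),\nabla u_2(x_2),\nabla u_2(x_3),\nabla u_2(x_4)$ are collinear for all such points. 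This cross-point rigidity is what forces either all gradients in the ball to equal a single $\bar p\in N$ (whence $u_1-u_2\equiv\varepsilon$ propagates to all of $\Omega$, a contradiction), or all gradients to lie on one fixed edge (whence $u_1-u_2$ is constant along every segment in the $\nu_i$ direction through $0$, contradicting boundedness of $\{u_1-u_2=\varepsilon\}$). Your pointwise analysis of $h$ alone cannot recover this, because it never relates gradients at distinct points.
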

\begin{proof}
Let $u_1, u_2$ be two distinct solutions. By the convexity of $F$,
we have that $(u_1+u_2)/2$ is also a solution and
$$\frac{F(\nabla u_1) +F(\nabla u_2)}{2} = F(\frac{\nabla u_1 + \nabla u_2}{2}), \quad \text{a.e. in $\Omega$.}$$
Hence, since $F$ is strictly convex in $N$, we have that if $x$ is
a L.P. for $\nabla u_1$ and $\nabla u_2$ then either
\begin{equation}\label{equal}\nabla u_1(x)= \nabla
u_2(x)\end{equation} or
\begin{equation}\label{sides} \nabla u_1(x),\nabla
u_2(x) \in [p_i,p_{i+1}], \quad \text{for some $i=1,\ldots, n.
$}\end{equation}

Now, let us assume by contradiction that $u_1(0)
> u_2(0),$ $0 \in \Omega,$ say $u_1(0) - u_2(0)=\eps.$ Then,
since $u_1,u_2$ are Lipschitz and they coincide on the boundary,
there exists $\rho=c \eps$ such that if $|\tau| < \rho$, then
$$B_{\rho} \subset \{u_2(x+\tau) +\eps/2 < u_1\} \subset
\subset \Omega.$$ The minimality of $u _2(x+\tau)$ together with
the inclusion above imply that $$v_\tau(x)=\min\{u_1(x),
u_2(x+\tau) + \eps/2\}$$ is also a minimizer of the problem $(P)$.

Thus, if $x_1\in B_{\rho/2}$ is a L.P. for $\nabla u_1$ and
$x_2\in B_{\rho/2}$ is a L.P. for $\nabla u_2$, we can translate
$x_2$ by $\tau$ to coincide with $x_1$, so that $x_1$ is a L.P.
for $v_\tau$ and according to \eqref{equal}-\eqref{sides} we have
either
\begin{equation}\label{equal2}\nabla
u_1(x_1)= \nabla u_2(x_2)\end{equation} or
\begin{equation}\label{sides2} \nabla u_1(x_1),\nabla
u_2(x_2) \in [p_i,p_{i+1}], \quad \text{for some $i=1,\ldots, n.
$}\end{equation}

Similarly, by appropriate translations we obtain that if $x_1 \in
B_{\rho/2}$ is a L.P. for $\nabla u_1$ and $x_2,x_3,x_4 \in
B_{\rho/2}$ are L.P. for $\nabla u_2$ then
\begin{align*}&F(\frac{1}{4}(\nabla u_1(x_1) + \nabla u_2(x_2) + \nabla
u_2(x_3)+\nabla u_2(x_4)))\\
\\ & = \frac{1}{4}(F(\nabla u_1(x_1)) + F(\nabla u_2(x_2)) + F(\nabla
u_2(x_3))+F(\nabla u_2(x_4))).\end{align*} Hence
\begin{equation}\label{collinear}\nabla u_1(x_1), \nabla u_2(x_2), \nabla
u_2(x_3),\nabla u_2(x_4) \ \text{are collinear.}\end{equation}

\

We distinguish two cases.

\

\textit{Case 1.} There exists at least one L.P. $\overline{x}\in
B_{\rho/2}$ for either $\nabla u_1$ or $\nabla u_2$ such that
$\overline{p}=\nabla u_i(\overline{x}) \in N$. Then, we conclude
that $\nabla u_1 = \nabla u_2= \overline{p}$ in $B_{\rho/2}$.
Hence $u_1-u_2=\eps$ on $B_{\rho/2}.$ Now we can proceed as above
with 0 replaced by any point in $B_{\rho/2}.$ By iterating this
argument a finite number of times we conclude that
$\{u_1-u_2=\eps\}$ must coincide with $\Omega$. This contradicts
that $u_1=u_2$ on $\partial \Omega.$

\

\textit{Case 2}. All Lebesgue points $x\in B_{\rho/2}$ for $\nabla
u_i, i=1,2$ are mapped on $\partial N$.

First we claim that all Lebesgue points $x\in B_{\rho/2}$ for
$\nabla u_i, i=1,2$ are mapped on the same side. Indeed, according
to \eqref{collinear} any three such points for $\nabla u_2$ are
collinear. The claim follows by interchanging $u_2$ and $u_1$.

Thus all Lebesgue points of $\nabla u_1, \nabla u_2$ in
$B_{\rho/2}$ must lie on the same side, say $[p_1,p_2]$. Therefore
$u_1-u_2$ is constant $\eps$ in $B_{\rho/2}$ on the segment with
middle point at zero, in the direction perpendicular to
$[p_1,p_2]$. This implies that the function $\nu \cdot x$ cannot
achieve a maximum (minimum) on the set $\{u_1-u_2 =\eps\}$, where
$\nu$ is a direction which differs from any of the perpendicular
directions to the sides $[p_i, p_{i+1}]$. This contradicts that
$\Omega$ is bounded.

\end{proof}


\section{The approximation.}

The goal of this section is to obtain smooth approximations $u_m$
that converge uniformly to the minimizer $u$ of the problem $(P)$.
Moreover, for any compact $K \Subset \Omega$ we want $\nabla
u_m(K)$ to lie in any neighborhood of $\overline{N}$ for $m$ large
enough. One way of achieving this is to approximate $F$ with
smooth convex functions $F_m$ that converge to $\infty$ outside
$\overline{N}$ and have cubic growth at $\infty.$

Let $\overline{F}$ be a convex function on $\R^2$, $\overline{F}=
0$ on $N$, $\overline{F}(p)= |p|^3$ for $|p|$ large, $\overline{F}
\in C^{\infty}(\R^2\setminus \overline{N})$ with $D^2\overline
F>0$ on $\R^2\setminus \overline{N}$.

Let $F_m \in C^{\infty}(\mathbb{R}^2)$ with $D^2F_m >0$, be such
that

\begin{enumerate}
\item $F_m \rightarrow F$ uniformly on $\overline{N}$;
\item $D^2 F_m \rightarrow D^2F$ uniformly on compacts on $N \setminus \mathcal{Q};$
\item $F_m(p) =C_m \overline{F}$ in $D_m:=\{p\in \R^2 |
\overline{F}(p)>1/m\},$ with $C_m \rightarrow +\infty.$
\end{enumerate}

Let $u_m$ be the minimizer to $$\int_\Omega F_m(\nabla v)dx, \quad
\quad v=\varphi \quad \mbox{on $\p \Omega$}. $$ Here $\varphi$ is
the boundary data of the minimizer of the problem $(P).$

We will show that the $u_m$'s are the desired smooth
approximations. Using the lower obstacle $\underline{\varphi}$ as
competitor, we see that $\int_\Omega F_m(\nabla u_m)dx$ is bounded
above by a fixed constant. Hence, since $C_m >1$ we conclude
$$\int_\Omega |\nabla u_m|^3 dx  \leq C.$$ By Sobolev embedding theorem
we have that $u_m$ is uniformly H\"older continuous in
$\overline{\Omega}.$ Thus, by Ascoli-Arzela there exists a
function $u$ such that (for a subsequence of $m$'s)
$$u_m \to u \quad \mbox{uniformly on $\overline{\Omega}$}.$$  First we show the following Proposition (recall the definition
\eqref{H} of $H$ from the previous section, and assume $H$ to be
defined on the whole $\R^2$ and to be constant outside $N$).

\begin{prop}\label{convergence} $u$ is the minimizer to the problem $(P)$. Also $$\nabla u_m \rightarrow \nabla
u \ \text{in measure} \ \text{on} \ A:=\{x | x \ \text{is a L.P.
for $\nabla u$ and $\nabla u(x) \in N$}\},$$ and $$H(\nabla u_m)
\rightarrow H(\nabla u) \ \text{in measure on} \ \Omega.$$
\end{prop}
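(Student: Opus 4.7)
The plan is to verify, in order, four statements: (a) $\nabla u\in\overline{N}$ a.e.; (b) $u$ solves $(P)$; (c) $\nabla u_m\to\nabla u$ in measure on $A$; (d) $H(\nabla u_m)\to H(\nabla u)$ in measure on $\Omega$. Statement (a) is quickest: testing $u_m$'s minimality against the obstacle $\underline{\varphi}$ (admissible for the $F_m$-problem, and with $F_m(\nabla\underline{\varphi})$ controlled by uniform convergence on $\overline{N}$) yields $\int_\Omega F_m(\nabla u_m)\,dx\le C$ independent of $m$. Because $F_m=C_m\overline{F}$ on $D_m$ with $C_m\to\infty$ and $\overline{F}\le 1/m$ off $D_m$, this forces $\int_\Omega\overline{F}(\nabla u_m)\,dx\to 0$. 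Extracting a subsequence with $\nabla u_m\rightharpoonup\nabla u$ weakly in $L^3$, weak lower semicontinuity of the convex nonnegative integrand $v\mapsto\int\overline{F}(\nabla v)\,dx$ then gives $\int\overline{F}(\nabla u)\,dx=0$, hence $\nabla u\in\overline{N}$ a.e.

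For (b) I would bracket $\lim\int F_m(\nabla u_m)\,dx$ from both sides. Using $u$ itself as competitor (admissible for the unconstrained $F_m$-problem), minimality and uniform convergence on $\overline{N}$ give $\limsup\int F_m(\nabla u_m)\le\int F(\nabla u)$. For the matching liminf, I would fix a convex continuous extension $\hat F\colon\R^2\to\R$ of $F$ with, say, affine growth outside $\overline{N}$; the construction of $F_m$ can be arranged so that $F_m\ge\hat F-o(1)$ uniformly on $\R^2$ (using $C_m\to\infty$ to dominate $\hat F$'s affine growth on $D_m$). Weak lower semicontinuity of $v\mapsto\int\hat F(\nabla v)\,dx$ then yields $\int F(\nabla u)=\int\hat F(\nabla u)\le\liminf\int\hat F(\nabla u_m)\le\liminf\int F_m(\nabla u_m)$. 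Thus $\int F_m(\nabla u_m)\to\int F(\nabla u)$, and for any admissible $v$ one gets $\int F(\nabla u)=\lim\int F_m(\nabla u_m)\le\lim\int F_m(\nabla v)=\int F(\nabla v)$, so $u$ solves $(P)$. Uniqueness of the minimizer promotes subsequential convergence to full sequential convergence.

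Both (c) and (d) stem from a single strict-convexity/midpoint estimate. Set $w_m=(u+u_m)/2$; since $w_m=\varphi$ on $\partial\Omega$, minimality of $u_m$ gives $\int F_m(\nabla u_m)\le\int F_m(\nabla w_m)$. For any compact $K\Subset N\setminus\mathcal{Q}$, uniform convergence of $D^2F_m$ on $K$ supplies a modulus $c_K$ with
\[
F_m\bigl(\tfrac{p+q}{2}\bigr)\le\tfrac12\bigl(F_m(p)+F_m(q)\bigr)-c_K(|p-q|),\qquad p,q\in K,
\]
together with plain convexity off $K$. Inserting this into the minimality chain and invoking (b) (which pins $\int F_m(\nabla u)-\int F_m(\nabla u_m)\to 0$) forces $\int_{\{\nabla u,\nabla u_m\in K\}} c_K(|\nabla u-\nabla u_m|)\,dx\to 0$. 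Combined with the $\overline{N}$-proximity from (a) and an exhaustion of $N\setminus\mathcal{Q}$ by such $K$'s, this yields (c). For (d), applying the same midpoint estimate on the set $\{\nabla u\in\partial N,\ \nabla u_m\in K''\}$ for a compact $K''\Subset N\setminus\mathcal{Q}$ chosen so that the midpoint stays in a compact of $N\setminus\mathcal{Q}$ at distance bounded below from $\nabla u$ produces a positive strict-convexity gap and forces this set's measure to zero; with (a) this drives $\nabla u_m$ to $\partial N$ in measure on $\{\nabla u\in\partial N\}$, and continuity of $H$ (constant on $\partial N$) delivers (d).

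The main obstacle is the liminf inequality in (b): since the $\nabla u_m$'s are not a priori constrained to $\overline{N}$, one cannot directly invoke lower semicontinuity against $F$, and the auxiliary convex minorant $\hat F$ (coordinated with the $F_m$ through the choice of $C_m$) is the key technical device. Once (b) is in hand, (c) and (d) are parallel applications of the midpoint/strict-convexity dichotomy already exploited in the uniqueness proof.
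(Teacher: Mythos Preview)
Your overall strategy is sound but diverges from the paper's, and your sketch of (c)--(d) leaves a real gap.

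The paper does everything with a single chain of inequalities built around one auxiliary convex function $\widetilde F\colon\R^2\to\R$ with \emph{bounded gradient} that approximates $F$ on $N$. For any admissible $v$ and large $m$,
\[
\int_\Omega F(\nabla v)\ \ge\ \int_\Omega F_m(\nabla u_m)-\eps\ \ge\ \int_\Omega\widetilde F(\nabla u_m)-2\eps\ \ge\ \int_\Omega\Bigl(\widetilde F(\nabla u)+\nabla\widetilde F(\nabla u)\cdot(\nabla u_m-\nabla u)+\omega_{\widetilde F}(\nabla u,\nabla u_m-\nabla u)\Bigr)-2\eps,
\]
where $\omega_{\widetilde F}\ge0$ is the convexity remainder. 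The bounded-gradient hypothesis is exactly what makes the linear term vanish under $\nabla u_m\rightharpoonup\nabla u$ in $L^3$. Varying $\widetilde F$ outside $\overline N$ forces $\nabla u\in\overline N$; letting $\widetilde F\to F$ on $\overline N$ gives minimality; and the same chain yields $\int\omega_{\widetilde F}(\nabla u,\nabla u_m-\nabla u)\to0$, from which (c) and (d) follow. There is no two-sided squeeze on $\int F_m(\nabla u_m)$, no need to ``arrange'' the $F_m$ to dominate an extension $\hat F$, and no midpoint argument.

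The gap in your (c)--(d): your midpoint inequality $F_m(\tfrac{p+q}{2})\le\tfrac12(F_m(p)+F_m(q))-c_K(|p-q|)$ is stated for $p,q\in K$, so you conclude only on $\{\nabla u,\nabla u_m\in K\}$. The phrase ``combined with the $\overline N$-proximity from (a) and an exhaustion'' does not dispose of the set $\{\nabla u\in K',\ \nabla u_m\notin K\}$: nothing you have written prevents $\nabla u_m$ from sitting near $\partial N$ or near some $q_j\in\mathcal Q$ on a set where $\nabla u$ lies well inside $N$. This can be repaired---the second difference $\tfrac12(F_m(p)+F_m(q))-F_m(\tfrac{p+q}{2})$ is bounded below by a constant depending only on $K'$ and $\mathrm{dist}(K',\partial K)$ whenever $p\in K'$ and $|p-q|\ge\mathrm{dist}(K',\partial K)$, using only the uniform lower bound on $D^2F_m$ on a neighborhood of $K'$---but that argument is missing from your write-up. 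The analogous case in (d), where $\nabla u\in\partial N$ and $\nabla u_m$ clusters near some $q_j\in\mathcal Q$, is more awkward, since neither endpoint lies in a region where $D^2F_m$ is uniformly bounded below in $m$. The paper's linearization around $\nabla u$ sidesteps all of this because $\omega_{\widetilde F}(\nabla u,\cdot)$ depends only on the base point $\nabla u$, not on where $\nabla u_m$ lands.
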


\begin{proof}
Let $v$ be a function on $\overline{\Omega}$, $v = \varphi$ on
$\partial \Omega$ and $\nabla v \in \overline{N}.$ Let
$\widetilde{F}$ be a convex function on $\mathbb{R}^2$ with
bounded gradient which approximates $F$ in $N$. Then for $m$
large,

\begin{align*}
\int_\Omega F(\nabla v)dx &\geq \int_\Omega F_m(\nabla v)dx -\eps
\geq \int_\Omega  F_m(\nabla u_m)dx -\eps \geq \int_{\Omega}
\widetilde{F}(\nabla u_m)dx -2\eps\\ & \geq \int_{\Omega}
(\widetilde{F}(\nabla u) + \nabla \widetilde{F}(\nabla u)(\nabla
u_m - \nabla u) + \omega_{\widetilde{F}}(\nabla u, \nabla u_m -
\nabla u))dx -2\eps,
\end{align*} with $\omega_{\widetilde{F}} \geq 0$ the modulus of convexity of
$\widetilde{F}$. Using that $\nabla u_m \rightarrow \nabla u$
weakly in $L^3$, we conclude that
$$\int_\Omega F(\nabla v)dx \geq \int_\Omega \widetilde{F}(\nabla
u)dx -2\eps.$$ Since $\widetilde F$ is arbitrary outside
$\overline N$, we deduce that $\nabla u \in \overline{N}$ and $u$
is the minimizer of the problem $(P)$.

Since $F$ is strictly convex in $N$ and
$$\int_\Omega \omega_{\widetilde{F}}(\nabla u, \nabla
u_m - \nabla u)dx \rightarrow 0, \quad m\rightarrow \infty$$ we
also obtain that $\nabla u_m$ converges in measure to $\nabla u$
in $A$, and $H(\nabla u_m)$ converges in measure to $H(\nabla u).$

\end{proof}

We continue with the following Proposition.

\begin{prop}\label{compact}$u_m \in C^\infty(\Omega)$. Also, for any compact $K \Subset \Omega$
and $\delta>0$, $\nabla u_m(K)$ is in a $\delta$-neighborhood of
$\overline{N}$ for $m$ large enough.
\end{prop}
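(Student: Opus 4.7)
The plan is to establish $C^\infty$ smoothness by classical regularity theory for smooth strictly convex variational problems, and then to prove the $\overline{N}$-confinement of $\nabla u_m$ via a contradiction combining the energy bound with a uniform (in $m$) modulus of continuity for $\nabla u_m$ on the region where $\nabla u_m \in D_m$.

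For smoothness: since $F_m \in C^\infty(\mathbb{R}^2)$ with $D^2F_m>0$, once a local $L^\infty$ bound on $\nabla u_m$ is known, the Euler--Lagrange equation $\mathrm{div}(\nabla F_m(\nabla u_m))=0$ is uniformly elliptic on the relevant gradient range, so differentiating yields a linear divergence-form equation for each $\partial_k u_m$ with bounded ellipticity ratio. De Giorgi--Nash plus Schauder bootstrap then gives $u_m \in C^\infty(\Omega)$. The local $L^\infty$ bound on $\nabla u_m$ comes from the cubic growth of $F_m$ at infinity together with comparison against affine competitors; at each fixed $m$ these constants are allowed to depend on $m$, which is enough for the smoothness assertion.

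For the confinement, first I would record the uniform energy estimate: using the lower obstacle $\underline{\varphi}$ (which is Lipschitz with $\nabla\underline\varphi\in\overline{N}$) as a competitor, the minimality of $u_m$ combined with the uniform convergence $F_m\to F$ on $\overline{N}$ and boundedness of $F$ on $\overline{N}$ gives $\int_\Omega F_m(\nabla u_m)\,dx \le C$ independent of $m$. Suppose the confinement claim fails, so there exist $K\Subset\Omega$, $\delta>0$, a subsequence (not relabelled), and points $x_m \in K$ with $p_m := \nabla u_m(x_m)$ satisfying $\mathrm{dist}(p_m,\overline{N})\ge\delta$. For $m$ large, $p_m\in D_m$ and $\overline{F}(p_m)\ge c_\delta>0$. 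If $p_m$ is unbounded one already has $F_m(p_m)\to\infty$ superlinearly; otherwise pass to a subsequence with $p_m\to p_\infty\notin$ a $\delta$-neighborhood of $\overline{N}$.

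The heart of the proof is to show that $\nabla u_m$ remains at a definite distance from $\overline{N}$ on a ball $B_r(x_m)$ with $r$ independent of $m$, which immediately gives the contradiction
\[
C \;\ge\; \int_{B_r(x_m)} F_m(\nabla u_m)\,dx \;\ge\; C_m\, c'_\delta\, |B_r|\;\longrightarrow\;\infty.
\]
To produce this $r$, I would exploit the factorization $F_m = C_m \overline{F}$ on $D_m$: on the subset where $\nabla u_m$ takes values in a compact piece of $D_m$, the Euler--Lagrange equation reduces after dividing by $C_m$ to $\mathrm{div}(\nabla\overline{F}(\nabla u_m))=0$, whose linearization in the direction $e_k$ has ellipticity constants depending only on $D^2\overline{F}$ on that compact piece, hence independent of $m$. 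De Giorgi--Nash then furnishes a modulus of continuity for $\nabla u_m$ that does not depend on $m$, and a continuation argument propagates the condition ``$\nabla u_m \in D_m$'' from the center $x_m$ to a fixed-size ball (if $\nabla u_m$ were to approach $\overline{N}$ inside a tiny ball, the uniform modulus of continuity would be violated).

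The main obstacle is precisely this uniform interior Hölder estimate for $\nabla u_m$ on the region $\{\nabla u_m \in D_m\}$: one must exploit the product structure $F_m = C_m\overline{F}$ so that the De Giorgi--Nash constants become $m$-independent, and then use a continuity/propagation argument to pass from a pointwise bound at $x_m$ to control on a ball of size independent of $m$.
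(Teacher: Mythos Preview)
Your smoothness argument is essentially fine (the interior Lipschitz bound at fixed $m$ is standard for functionals with $p$-growth, and then the bootstrap is routine). The gap is in the confinement step, specifically in the ``continuation argument'' that is supposed to propagate the condition $\nabla u_m\in D_m$ from the single point $x_m$ to a ball of $m$-independent radius.

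The De Giorgi--Nash estimate you invoke is an \emph{interior} estimate: it says that if $B_\rho(y)$ lies in the region where the linearized equation is uniformly elliptic, then $\mathrm{osc}_{B_{\rho/2}(y)}\nabla u_m\le \gamma\,\mathrm{osc}_{B_\rho(y)}\nabla u_m$. To obtain a modulus of continuity at $x_m$ you must start from a ball $B_{r^*}(x_m)$ on which $\nabla u_m$ already stays in a fixed compact of $D_m$; the resulting H\"older bound then controls $\nabla u_m$ on $B_{r^*/2}(x_m)$, not on $B_{r^*}(x_m)$ itself, and certainly not outside it. So the argument cannot enlarge the ``good'' ball --- it only gives information strictly inside it, and nothing prevents $r^*$ from being arbitrarily small in $m$. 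The sentence ``if $\nabla u_m$ were to approach $\overline N$ inside a tiny ball, the uniform modulus of continuity would be violated'' is circular: the modulus of continuity you would need at the boundary of the good region is precisely what the interior estimate does not supply. A second difficulty compounds this: you explicitly allow the local Lipschitz bound on $\nabla u_m$ to depend on $m$, so even on the good region the oscillation input to De Giorgi--Nash is not uniform, and the ``$p_m$ unbounded'' case cannot be dismissed by a pointwise value of $F_m$.

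The paper avoids this circularity by a different, global mechanism. It chooses a convex $\eta$ with $\eta=0$ outside a fixed $D_{m_0}$ and $\eta(p)=|p|$ for large $|p|$, sets $\psi=\eta(\nabla u_m)+\tfrac12\eta(\nabla u_m)^2$, and checks directly (using $F_m=C_m\overline F$ on $D_{m_0}$) that $\psi$ is a nonnegative weak subsolution of $\partial_i(a_{ij}\partial_j\psi)\ge 0$ with $a_{ij}=\overline F_{ij}(\nabla u_m)/(1+\eta(\nabla u_m))$. The point is that $\psi$ vanishes where $\nabla u_m$ is near $\overline N$, so the subsolution inequality holds on \emph{all} of $\Omega$, and the normalization makes the coefficients uniformly elliptic independently of $m$ (the factor $C_m$ cancels). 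The local maximum principle then gives $\|\psi\|_{L^\infty(K)}\le C\|\psi\|_{L^{3/2}(\Omega)}$, and the right-hand side is controlled by $\int_\Omega \overline F(\nabla u_m)\,dx\to 0$ via the uniform energy bound. This simultaneously yields the uniform local Lipschitz bound and the confinement, without ever needing to know in advance where $\nabla u_m$ sits.
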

\begin{proof}
The proof is standard and follows the lines of interior Lipschitz
estimates for $p$-harmonic functions.

Using the lower obstacle $\underline{\varphi}$ as competitor, we
see that $\int_\Omega F_m(\nabla u_m)dx$ is bounded above by a
fixed constant. Hence,
\begin{equation}\label{limit} \int_\Omega \overline{F}(\nabla
u_m)dx \rightarrow 0, \ \text{as $m \rightarrow +\infty$}.
\end{equation}

For notational simplicity we denote $u_m = \tilde{u}$ and
$F_m=\widetilde F$.

Let $\eta$ be a convex function, $\eta =0$ outside $D_{m_0} =
\{\overline{F}> 1/m_0\}$ for some fixed $m_0$, $\eta(p)=|p|$ for
large $|p|$. Also, let
$$\psi= \eta(\nabla \tilde{u}) +\frac{1}{2}\eta^2(\nabla \tilde{u}).$$ Then $\psi$ is a subsolution to the
following elliptic equation
\begin{equation}{\label{5.2}} \p_i (a_{ij}\psi_j)\geq 0, \quad a_{ij}=\frac{\overline{F}_{ij}(\nabla \tilde{u})}{1+\eta(\nabla \tilde{u})} .
\end{equation}
Indeed, since $\widetilde{F}=C_m\overline{F}$ in $D_{m_0}$ (we
assume $m>m_0$) and $\eta, \psi =0$ when $\nabla \tilde u$ is
outside $D_{m_0}$, we can replace $\overline{F}$ by
$\widetilde{F}$ in our computations. Using the Euler-Lagrange
equation for $\tilde u$
$$\p_i (\widetilde{F}_{ij}\tilde{u}_{kj})=0,$$ it is
straightforward to check that
$$\p_i (a_{ij}\psi_j)= \widetilde{F}_{ij}\eta_{kl}\tilde{u}_{li}\tilde{u}_{kj} \geq 0.$$

Notice that the equation \eqref{5.2} is uniformly elliptic since
in $D_{m_0}$ we have
$$\lambda_{m_0}(1+\eta) I \leq D^2\overline{F} \leq \Lambda_{m_0}(1+\eta)I.$$
Now we can apply the standard estimate (see \cite{GT}, Theorem
8.17) and obtain that for any compact $K \Subset \Omega,$
$$\|\psi\|_{L^\infty(K)} \leq C
\|\psi\|_{L^\alpha(\Omega)}, \quad \mbox{for $\alpha >1$,}$$ with
$C$ depending on $m_0$, $K$, $\alpha$. We choose $\alpha=3/2$ and
use
$$\psi^{3/2} \le C(1+|\nabla \tilde u|^3) \le C(m_0) \overline F(\nabla \tilde u)$$ together with \eqref{limit} to obtain
$$\|\psi\|_{L^\infty(K)} \to 0 \quad \mbox{as $m \to \infty$},$$
which implies the second statement of our proposition.

Since $ \nabla \tilde u$ is locally bounded the first part
($\tilde u \in C^{\infty}$) follows from the classical theory.

\end{proof}

\textbf{Remark.} The smoothness of $u_m$ follows also from the
fact that $u_m$ solves a uniformly elliptic equation in 2D,
therefore $u_m$ is $C^{1,\alpha}$ in the interior and hence it is
$C^\infty.$

\qed

Our analysis will rely on the following classical theorem that was
also used in other two dimensional results (see for example
\cite{GT}, \cite{H}, \cite{S}).

\begin{thm}\label{components}Let $v$ be a solution to
$$a_{ij}(x)v_{ij}(x) = 0 \quad \text{in $D \subset \R^2$,}
$$ with $A(x)={a_{ij}(x)} >0, A(x)\in C^{\infty},$ and $D$ simply
connected. Assume $v$ is not linear. Then in each neighborhood $U$
there exists a point $x_U$ such that each set $$\{v > l_{U}\},
\quad \{v < l_{U}\}$$ with
$$l_{U}(x) :=v (x_U) + \nabla v(x_U)\cdot (x-x_U)$$ has at least
two connected components in $D$ that intersect $U$. Moreover these
components are not compactly supported in $D$. \end{thm}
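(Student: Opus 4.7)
I would reduce the statement to analyzing the level sets of $w := v - l_U$, which satisfies the same PDE $a_{ij}w_{ij}=0$ in $D$ and has a critical zero at $x_U$ by construction ($w(x_U)=0$, $\nabla w(x_U)=0$). The plan is to choose $x_U \in U$ so that the Hessian $D^2 w(x_U) = D^2 v(x_U)$ is nondegenerate indefinite; then $x_U$ is a nondegenerate saddle of the smooth function $w$, producing four local sectors that alternate in sign, and I would argue via simple connectivity of $D$ together with the maximum principle that these four sectors lie in four distinct, non-compactly-supported connected components of $\{w\neq 0\}$ in $D$.

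\emph{Choosing $x_U$.} Since $A \in C^\infty$ is positive, $v \in C^\infty(D)$ by elliptic regularity. If $v$ agreed with a linear function on any open subset, it would be linear on all of $D$ by Aronszajn unique continuation; since $v$ is not linear, the set $\{D^2 v = 0\}$ has empty interior, so I can pick $x_U \in U$ with $D^2 v(x_U) \neq 0$. The equation $\mathrm{tr}(A(x_U) D^2 v(x_U)) = 0$ combined with $A(x_U) > 0$ forces $D^2 v(x_U)$ to be indefinite: otherwise $A(x_U)^{1/2} D^2 v(x_U) A(x_U)^{1/2}$ would be a nonzero semidefinite matrix (same inertia as $D^2 v(x_U)$ by Sylvester's law), hence would have nonzero trace. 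Thus $x_U$ is a nondegenerate saddle of $w$.

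\emph{Local sectors and global separation.} By the Morse lemma applied to the smooth function $w$ at the nondegenerate saddle $x_U$, there exist $C^\infty$ coordinates near $x_U$ in which $w = y_1 y_2$. In a small neighborhood $V \subset U$ of $x_U$, the nodal set $\{w = 0\}$ is therefore the union of two transverse smooth arcs through $x_U$, and $V \setminus \{w = 0\}$ splits into four open sectors $A_1, A_3 \subset \{w>0\}$ and $A_2, A_4 \subset \{w<0\}$, arranged cyclically. To show that $A_1$ and $A_3$ lie in distinct connected components of $\{w>0\} \cap D$, I would argue by contradiction: join $A_1$ to $A_3$ by a path $\gamma \subset \{w>0\}$ and close it with a short arc through $x_U$ to obtain a Jordan curve $L \subset D \cap \{w \geq 0\}$. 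Since $D$ is simply connected, $L$ bounds a topological disk $\Delta \subset D$, and near $x_U$ exactly one of $A_2, A_4$ --- say $A_2$ --- lies on the interior side of $L$. The connected component of $\{w < 0\}$ containing $A_2$ cannot cross $L \subset \{w \geq 0\}$, hence is trapped in $\overline{\Delta}$ and compactly contained in $D$; but then $w$ has a negative interior minimum while vanishing on the boundary, contradicting the minimum principle for $a_{ij} w_{ij} = 0$. The symmetric argument handles $A_2, A_4$, and applied directly to any sign-component of $w$ in $D$ it also shows that component cannot be compactly supported in $D$, giving the last assertion.

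\emph{Main obstacle.} The delicate step is the planar topology: ensuring that the loop $L$ built from $\gamma$ bounds a disk in $D$ (this is where simple connectivity is essential) and that this disk traps the \emph{whole} negative sector component even when $\gamma$ is a long, complicated path. The local sector count is essentially free once we have the nondegenerate indefinite Hessian, so the work is concentrated in this global topological / maximum-principle combination.
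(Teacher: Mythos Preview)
Your proposal is correct and follows essentially the same approach as the paper's proof: both choose $x_U\in U$ with nondegenerate indefinite Hessian (the paper phrases this as $\det D^2v(x_U)<0$, which is equivalent to your Sylvester argument once one notes that the equation forces $\det D^2v\le 0$ everywhere), obtain four local sign sectors at the saddle, and then combine the maximum principle with simple connectivity of $D$ to separate them globally into four distinct, non-compactly-supported components. Your Jordan-curve trapping argument is exactly the content behind the paper's one-line topological step, just spelled out in more detail.
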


\begin{proof}Let $x_U \in U $ be such that $det D^2v(x_U) <0$.
Such a point exists otherwise if $\det D^2v =0$ in $U$, then $D^2v
=0$ in $U$ and by unique continuation $v$ is linear in $D$.

Clearly, the sets $\{v > l_{U}\}, \{v < l_{U}\}$ intersect a small
ball around $x_U$ precisely in four  disjoint connected
components.

On the other hand, it follows from the maximum principle that any
connected component of the sets $\{v > l_{U}\}, \{v < l_{U}\}$
cannot be compactly supported in $D$. Hence, since $D \subset
\R^2$ and $D$ is simply connected, the components in the small
ball belong to four disjoint connected components in $D.$

\end{proof}

\section{The proof of Theorem 1.3.}

In this section we exhibit the proof of Theorem 1.3. We start by
obtaining a result to which we refer to as the localization
Theorem. From now on we tacitly assume that our statements hold
for all $m$ sufficiently large.

\begin{thm}\label{localization} Assume $B_1 \subset \Omega$ and
\begin{equation}\label{hardassumption}\nabla u_m (B_1) \cap \mathcal{B}_{\rho}(p_0)=\emptyset,
\quad p_0 \in \overline{N}.\end{equation} Then for any $\eps>0$,
there exists $\delta = \delta(\eps, F, \mathcal{B}_{\rho}(p_0))$
such that either
$$\nabla u_m (B_\delta) \subset \mathcal{B}_\eps(p), \ \text{for some $p \in N$},$$ or $$\nabla u_m (B_\delta) \subset
\mathcal{N}_\eps$$ with $\mathcal{N}_\eps$ the $\eps$ neighborhood
of $\partial N.$
\end{thm}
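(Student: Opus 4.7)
The plan is to adapt the iterative covering argument of Theorem \ref{Main2} to the smooth approximations $u_m$. Because $F_m \to F$ in $C^2$ on compact subsets of $N \setminus \mathcal{Q}$ and $\{u_m\}$ is uniformly Lipschitz on compact subdomains of $\Omega$ (Proposition \ref{compact}), the Caccioppoli estimate of Proposition \ref{Cacc3} and the localization Lemma \ref{On} both apply to $u_m$ with constants \emph{uniform in $m$}, provided we use them on a ball $\mathcal{B}_{4\rho_0}(p)$ whose intersection with the range of $\nabla u_m$ sits in a compact set on which $D^2F \ge cI$.

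Fix $\eps > 0$; without loss of generality take $\eps$ small compared with $\rho$ and with the local geometry of $N$ and $\mathcal{Q}$ near $p_0$, and introduce the \emph{good region}
\[
G_\eps := \bigl((N \setminus \mathcal{N}_\eps) \cap \overline{\mathcal{B}}_M\bigr) \setminus \bigcup_{q \in \mathcal{Q}} \mathcal{B}_\eps(q),
\]
on which $F$ is smooth and uniformly convex. Cover $G_\eps$ by a finite overlapping family $\{\mathcal{B}^k_{\rho_0}\}_{k=1}^J$ of radius $\rho_0 = c\min(\rho,\eps)$, with centers in $G_\eps$ and with each four-fold dilate $\mathcal{B}^k_{4\rho_0}$ still contained in the uniform-convexity region. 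I would then locate a starting cover ball $\mathcal{B}^{k_1}_{\rho_0} \subset \mathcal{B}_\rho(p_0)$; for $p_0$ interior to $N$ and away from $\mathcal{Q}$ this is immediate, while for $p_0 \in \partial N \cup \mathcal{Q}$ one uses that $\mathcal{B}_\rho(p_0) \cap N$ contains a point at distance $\gtrsim \rho$ from $\partial N \cup \mathcal{Q}$ (thanks to the polygonal shape of $N$ and the discreteness of $\mathcal{Q}$), provided $\eps$ is chosen small enough depending on $\mathcal{B}_\rho(p_0)$.

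The iteration then mirrors the proof of Theorem \ref{Main2}: Lemma \ref{On} applied to $u_m$ on $\mathcal{B}^{k_1}_{\rho_0}$ yields $\delta_1 > 0$, uniform in $m$, such that either $\nabla u_m(B_{\delta_1}) \subset \mathcal{B}^{k_1}_{4\rho_0}$---which, since the center lies in $N$ and $4\rho_0 \le \eps$, gives the first alternative with $p$ the center---or $\nabla u_m(B_{\delta_1}) \cap \mathcal{B}^{k_1}_{3\rho_0} = \emptyset$. In the second case every cover ball sitting inside $\mathcal{B}^{k_1}_{3\rho_0}$ is also avoided, and we reapply Lemma \ref{On} to each. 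Propagating through the connected cover, after at most $J$ steps either the first alternative occurs, or $\nabla u_m(B_\delta)$ avoids $\bigcup_k \mathcal{B}^k_{\rho_0}$ entirely. In that case the connected set $\nabla u_m(B_\delta)$ is forced into a single component of $\overline{\mathcal{B}}_M \setminus \bigcup_k \mathcal{B}^k_{\rho_0}$, and for $\eps$ small these components are $\mathcal{N}_\eps$ and the disjoint disks $\mathcal{B}_\eps(q)$ for $q \in \mathcal{Q}$, yielding either the first alternative with $p = q$ or the second. The main obstacle is producing the starting ball when $p_0 \in \partial N \cup \mathcal{Q}$, and it is precisely this geometric step that forces $\delta$ to depend on $\mathcal{B}_\rho(p_0)$ and not merely on $\rho$ and $\eps$.
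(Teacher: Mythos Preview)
Your iteration scheme is exactly the paper's, and the covering argument plus the discussion of the starting ball are both correct. The gap is in the first sentence: you cannot simply cite Proposition~\ref{Cacc3} for $u_m$ with constants uniform in $m$. The constant in that proposition depends on $\|\nabla F\|_{L^\infty(\mathcal{B}_M)}$, because the test function there is $\phi=\xi^2 F_k(\nabla u)$ and the right-hand side is bounded by $\int |\nabla F(\nabla u)|^2|D^2(\xi^2)|\,dx$. When you replace $F$ by $F_m$ this becomes $\|\nabla F_m\|_{L^\infty(\nabla u_m(B_1))}$, and since $F_m=C_m\overline F$ with $C_m\to\infty$ just outside $N$, while Proposition~\ref{compact} only tells you that $\nabla u_m(K)$ eventually lies in any fixed neighborhood of $\overline N$ (with no quantitative rate), you have no uniform control on $\nabla F_m(\nabla u_m)$. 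The same defect then contaminates the constant in Lemma~\ref{On}.

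The paper repairs this by proving a \emph{localized} Caccioppoli inequality directly: one takes the test function $\phi=\xi^2\eta_k(\nabla u_m)$ with $\eta$ compactly supported in $\overline{\mathcal{B}_{4r}(p)}\subset N\setminus\mathcal{Q}$, and uses the trace identity of Lemma~\ref{Onprel} to rewrite the left side as $\int \xi^2\,(\det D^2F_m)\,|\det D^2u_m|\,\mathrm{Tr}((D^2F_m)^{-1}D^2\eta)\,dx$. Because $\eta$ is supported in $\mathcal{B}_{4r}(p)$, every occurrence of $D^2F_m$ in both sides is evaluated only where $\nabla u_m\in\mathcal{B}_{4r}(p)$, and there $D^2F_m\to D^2F$ uniformly. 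This yields
\[
\int_{B_{1/2}\cap(\nabla u_m)^{-1}(\mathcal{B}_{4r}(p))}|D^2u_m|^2\,dx\le C
\]
with $C$ depending only on the ellipticity constants of $D^2F$ on $\mathcal{B}_{4r}(p)$, hence uniform in $m$. From here the alternative of Lemma~\ref{On} and your iteration go through verbatim. So the fix is not to quote Proposition~\ref{Cacc3} but to redo its computation with $\eta_k(\nabla u_m)$ in place of $F_k(\nabla u_m)$; this is precisely what localizes the estimate in the gradient variable and makes the constants independent of the blow-up of $F_m$ near $\partial N$.
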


\textbf{Remark.} Another way of stating the conclusion of this
theorem is to say that $H(\nabla u_m)$ is continuous with a
uniform modulus of continuity.

Also recall that $H(\nabla u_m) \rightarrow H(\nabla u)$ in
measure, thus if the hypothesis of Theorem \ref{localization}
holds for all large $m$'s, then $u$ satisfies the same conclusion
of the Theorem.

\qed

\begin{proof}
Assume $$\nabla u_m (B_1) \cap \mathcal{B}_{r}(p)=\emptyset, \quad
\overline{\mathcal{B}_{4r}(p)} \subset N \setminus \mathcal{Q}.$$

We wish to prove that there exists $\delta>0$ depending on $F,
\mathcal{B}_r(p),$ such that either
\begin{equation}\label{1ballinclusionm}
\nabla u_m(B_\delta) \subset \mathcal{B}_{4r}(p),
\end{equation}or
\begin{equation}\label{1noninterm}
\nabla u_m(B_\delta) \cap \mathcal{B}_{3r}(p) = \emptyset.
\end{equation}

We argue similarly as for Lemma 3.1. It suffices to prove the
following Caccioppoli-type inequality,
\begin{equation}\label{new1}
\int_{B_{1/2} \cap (\nabla u_m)^{-1}(\mathcal{B}_{4r}(p))} |D^2
u_m|^2dx \leq C,
\end{equation}
for some constant $C$ depending on $F, \mathcal{B}_{r}(p).$

For notational simplicity let $F_m= \widetilde{F}$ and $u_m =
\tilde{u}$. We have

\begin{equation*}\label{weak2}
\int_{B_1} \widetilde{F}_{ij} (\nabla \tilde{u}) \tilde{u}_{k j}
\phi_i dx =0, \ \forall \phi \in C_0^\infty(B_1).
\end{equation*}

Let us choose $\phi= \xi^2 \eta_k(\nabla \tilde{u})$, with $\eta$
compactly supported on $\overline{\mathcal{B}_{4r}(p)}$ and $\xi
\in C^\infty_0(B_1), 0 \leq \xi \leq 1, \xi \equiv 1$ on
$B_{1/2}$. We obtain, after summing over all $k$'s (when clear, we
drop the dependence on $\nabla \tilde{u}$),
\begin{equation}\label{Step2}
\int_{B_1} \widetilde{F}_{ij}\tilde{u}_{k j}\xi^2(\eta_k(\nabla
\tilde{u} ))_i dx = -2 \int_{B_1} \widetilde{F}_{ij} u_{k j} \xi
\xi_i \eta_k dx.
\end{equation} Now we analyze the left-hand side of
\eqref{Step2}. We proceed as in Lemma \ref{Onprel} and we obtain
\begin{align*}
LHS&= \int_{B_1} \widetilde{F}_{ij}\tilde{u}_{k j}\xi^2
\eta_{kl}\tilde{u}_{li} dx \\& = \int_{B_1} \xi^2
(detD^2\widetilde{F})|det D^2\tilde{u}|
Tr((D^2\widetilde{F})^{-1}D^2\eta)dx.
\end{align*}
Choose $\eta$ such that $$Tr((D^2\widetilde{F})^{-1}D^2\eta) \geq
c, \quad \text{on $ \mathcal{B}_{4r}(p) \setminus
\mathcal{B}_r(p)$},
$$ for a small constant $c$ depending on the
ellipticity constants of $D^2F$ on $\mathcal{B}_{4r}(p)$, say
$\lambda, \Lambda$. We conclude
\begin{equation}\label{finalbound2} LHS \geq c \int_{B_{1 }\cap (\nabla \tilde{u})^{-1}(\mathcal{B}_{4r}(p))}
|D^2 \tilde{u}|^2 \xi^2 dx\end{equation} with $c$ depending on
$\lambda,\Lambda$.

On the other hand, the right-hand side of \eqref{Step2} is bounded
by
\begin{align}\label{Bound2}
|RHS|  & \leq  \gamma C \int_{B_1 \cap (\nabla
\tilde{u})^{-1}(B_{4r}(p))}|D^2\tilde{u}|^2\xi^2 dx + \tilde{C}
\end{align}
with $C$ depending on $\Lambda$. Combining
\eqref{Step2},\eqref{finalbound2} and \eqref{Bound2} we obtain the
desired inequality.

We now proceed similarly as in the proof of Theorem 1.2. We cover
the set $N \setminus (\mathcal{N}_{\eps} \cup \mathcal{B}_{\eps
}(q_i))$ as a finite union of balls $\mathcal{B}^k_r$ of radius
$r=\eps /5$ (with centers in the set). From our assumption, there
exists $k$ such that
$$\nabla u_m (B_1) \cap \mathcal{B}^k_{r} =\emptyset,$$ provided
that $\eps$ is small enough depending on $\rho, F$. Now the
conclusion follows from the same iteration argument as in Theorem
1.2 (notice that by Proposition \ref{compact}, $\nabla u_m
(B_\delta)$ is in an $\eps$ neighborhood of $\overline{N}$ for
large $m$.)

\end{proof}

\subsection{The proof of Theorem 1.3.} We now present a series of Propositions which will all
be combined towards the proof of Theorem 1.3. We start by stating
two Propositions for the approximation $u_m$ which correspond to
the flatness Theorems \ref{Flat1}-\ref{Flat2} for the minimizer
$u$. We present their proofs in the last section.

\begin{prop}\label{flat1}Assume $B_1 \subset \Omega$ and $$\mathcal{B}_{\delta}(p_0) \subset
N.$$Then, there exists $\eps$ depending on $\delta, p_0, F,$ such
that if $$|u_m - p_0 \cdot x|\leq \eps, \quad x \in B_1$$ then
$$\nabla u_m (B_{1/2}) \subset \mathcal{B}_{\delta}(p_0).
$$
\end{prop}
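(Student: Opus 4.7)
The plan is a proof by contradiction using the topological classification of nodal sets of planar elliptic solutions given by Theorem \ref{components}.

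\emph{Setup.} I first shrink $\delta$ slightly so that $\overline{\mathcal{B}_\delta(p_0)} \subset N \setminus \mathcal{Q}$; then $D^2 F$ is smooth and uniformly positive definite on this compact ball, and by the $C^2$-convergence of $F_m$ to $F$ on compacts of $N \setminus \mathcal{Q}$, the same holds for $D^2 F_m$ with uniform bounds $\lambda I \leq D^2 F_m \leq \Lambda I$ depending only on $F, \delta, p_0$ for $m$ large. Proposition \ref{compact} also supplies a uniform Lipschitz bound on $u_m$ on $B_{3/4}$.

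\emph{Contradiction setup.} Suppose some $x_0 \in B_{1/2}$ has $q := \nabla u_m(x_0) - p_0$ with $|q| \geq \delta$. Let
\[
h(x) = u_m(x) - u_m(x_0) - \nabla u_m(x_0)\cdot(x - x_0).
\]
Then $h$ satisfies the linear PDE $F_{m,ij}(\nabla u_m)\,h_{ij} = 0$ in $B_1$, with $h(x_0) = 0$ and $\nabla h(x_0) = 0$. Note $h$ is non-linear: if $u_m$ were linear, the flatness $|u_m - p_0 \cdot x| \leq \eps$ would force $|\nabla u_m - p_0| = O(\eps) < \delta$. On $\partial B_1$, the flatness hypothesis yields
\[
h(x) = -q \cdot (x - x_0) + R(x), \qquad |R(x)| \leq 2\eps.
\]
Since $|q| \geq \delta$ and $x_0 \in B_{1/2}$, the leading sinusoid $-q \cdot (x - x_0)$ has amplitude of order $\delta$ on $\partial B_1$, with exactly two zero crossings, positive on one arc and negative on the complementary arc.

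\emph{Topological contradiction.} Apply Theorem \ref{components} to $h$ on $B_1$ (uniformly elliptic and smooth since $u_m$ is smooth and $F_m$ strictly convex), taking the neighborhood $U$ to be a small ball around $x_0$. By continuity of $\nabla u_m$, one finds a saddle point $\tilde x_0 \in U$ with $|\nabla h(\tilde x_0)|$ and $|h(\tilde x_0)|$ as small as desired; the tangent plane $\tilde l(x) = h(\tilde x_0) + \nabla h(\tilde x_0) \cdot (x - \tilde x_0)$ is then uniformly small on $\overline{B_1}$, so $(h-\tilde l)|_{\partial B_1}$ retains the sign pattern of $h|_{\partial B_1}$ up to a perturbation of the same order $O(\eps)$. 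Theorem \ref{components} supplies four non-compactly supported connected components of $B_1 \setminus \{h = \tilde l\}$ (two of each sign), each reaching $\partial B_1$ on a non-empty set. The four boundary touches are pairwise disjoint, which together with the sign structure forces at least four sub-arcs of alternating sign on $\partial B_1$; this contradicts the two-arc structure of the boundary trace derived above.

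\emph{Main obstacle.} The most delicate step is the last one: one must rule out the possibility that the perturbation $R$ creates additional zero crossings of $h - \tilde l$ on $\partial B_1$ inside the two narrow transition sub-arcs of length $O(\eps/\delta)$ where the leading sinusoid $-q \cdot (x - x_0)$ is of size $O(\eps)$. To handle this I would use interior $C^{2,\alpha}$ estimates for $u_m$ on the uniformly elliptic subset $\{\nabla u_m \in \mathcal{B}_\delta(p_0)\}$ of $B_1$, which give pointwise bounds on the second tangential derivative of $h$ along $\partial B_1$ and thus preclude rapid oscillations in the transition sub-arcs; alternatively, an iterative zooming scheme that re-centers on smaller balls and improves the flatness would drive the transition width to zero and force exactly two honest zero crossings.
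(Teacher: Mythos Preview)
Your overall strategy---contradiction via Theorem~\ref{components}---matches the paper's, and your observation that flatness forces $h|_{\partial B_1}$ to be a sinusoid plus an $O(\eps)$ error is essentially the paper's remark that one component of $\{u_m < l_m\}$ must lie in the thin strip $\{|(x-x_0)\cdot(p_0-p_2)| \leq 2\eps\}$. The gap is in how you extract the contradiction.

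You want to count sign changes of $h - \tilde l$ on $\partial B_1$ and find only two, whereas four alternating components would force four. You correctly flag the obstacle: inside the two transition sub-arcs (angular width $O(\eps/\delta)$) the sign of $h - \tilde l$ is uncontrolled, so the two ``extra'' components may well exit precisely there. Your first proposed fix---interior $C^{2,\alpha}$ estimates on $\{\nabla u_m \in \mathcal{B}_\delta(p_0)\}$---does not help, because there is no reason $\partial B_1$ (or any definite neighborhood of it) lies in that set; the ellipticity constants of $F_{m,ij}(\nabla u_m)$ along $\partial B_1$ are unknown and may degenerate with $m$, since $D^2F_m$ is uniformly controlled only on compacta of $N\setminus\mathcal{Q}$. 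The zooming fix is not concrete enough to evaluate, and it is hard to see how rescaling improves the ratio of error to sinusoid amplitude.

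The paper circumvents this by working inside the strip rather than on $\partial B_1$. After rotating so the strip is horizontal, it compares $u_m$ on the region above the ``fat'' negative component $U$ with an explicit barrier $w = \delta'(e^{k(x_2-20x_1^2)} - c)$ whose gradient is kept in a fixed ball $\mathcal{B}_\rho$ where $D^2F_m$ is uniformly elliptic with constants depending only on $F$; thus $w$ is a strict subsolution regardless of where $\nabla u_m$ lives. The minimum of $u_m - w$ is then forced onto $\partial U$ inside the rectangle, and monotonicity of $w$ in $e_2$ yields a contradiction when the vertical ray from that minimum hits the thin strip component. A further structural difference: the paper does not attempt to prove $\nabla u_m(B_{1/2}) \subset \mathcal{B}_\delta(p_0)$ in one shot. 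It shows instead that $\nabla u_m(B_{1/2})$ misses every small ball $\mathcal{B}_\rho(p_1) \subset N\setminus\mathcal{Q}$ with $p_0 \notin \mathcal{B}_{2\rho}(p_1)$, and then combines this with the elementary fact $\nabla u_m(B_{2\sqrt\eps}) \cap \mathcal{B}_{2\sqrt\eps}(p_0) \neq \emptyset$ and the localization Theorem~\ref{localization} to conclude.
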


\begin{prop}\label{flat2} Let $p_0 \in \partial N.$ Then, there
exists $\eps$ depending on $\delta, F$, such that if
$$|u_m - p_0\cdot x| \leq \eps, \quad x \in B_1$$ then
$$[\nabla u_m(x), p_0] \quad \text{is in a $\delta$-neighborhood of $\partial N$, for all $x \in B_{1/2}.$}$$
\end{prop}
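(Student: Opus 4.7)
My plan is to prove Proposition \ref{flat2} by contradiction, combining Theorem \ref{components} with the Caccioppoli inequality used in the proof of Theorem \ref{localization}, and closing the argument by a logarithmic divergence in the spirit of Lemmas \ref{A1}, \ref{A2}, and \ref{On}.

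Suppose the conclusion fails at some $x_0\in B_{1/2}$. Set $p_1:=\nabla u_m(x_0)$ and $d:=p_1-p_0$, so that $|d|>\delta$ and, since $p_0\in\partial N$, the segment $[p_0,p_1]$ contains an interior point $q_*$ with $\operatorname{dist}(q_*,\partial N)>\delta$. Shrinking $\delta$ once so that $\delta<\operatorname{dist}(\mathcal{Q},\partial N)$ guarantees a ball $\mathcal{B}_r(q_*)\Subset N\setminus\mathcal{Q}$ with $r=r(\delta)>0$. Because $D^2F_m\to D^2F$ uniformly on compacts of $N\setminus\mathcal{Q}$, $D^2F_m$ is uniformly elliptic on $\mathcal{B}_r(q_*)$ with constants depending only on $\delta$ and $F$, and the same computation as in Proposition \ref{Cacc3} yields the uniform upper bound
\[
\int_{B_{1/2}\cap(\nabla u_m)^{-1}(\mathcal{B}_r(q_*))}|D^2u_m|^2\,dx\le C(\delta,F).
\]
If $u_m$ were linear on $B_1$, then $|u_m-p_0\cdot x|\le\eps$ would force $\nabla u_m=p_0+O(\eps)$, contradicting $|d|>\delta$; so $u_m$ is not linear and Theorem \ref{components} applies.

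I choose a small neighborhood $U$ of $x_0$ and let $x_U\in U$ be supplied by Theorem \ref{components}, so that each of $\{u_m>l_{x_U}\}$ and $\{u_m<l_{x_U}\}$ has at least two non-compactly contained components in $B_1$ intersecting $U$. Set $d':=\nabla u_m(x_U)-p_0$ and $c':=u_m(x_U)-p_0\cdot x_U$; for $U$ small one has $|c'|\le\eps$ and $|d'|>\delta/2$. The identity $u_m-l_{x_U}=(u_m-p_0\cdot x)-c'-d'\cdot(x-x_U)$ together with $|u_m-p_0\cdot x|\le\eps$ shows $\{d'\cdot(x-x_U)<-2\eps\}\cap B_1\subset\{u_m>l_{x_U}\}$ and $\{d'\cdot(x-x_U)>2\eps\}\cap B_1\subset\{u_m<l_{x_U}\}$. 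Consequently one connected component of $\{u_m>l_{x_U}\}$ engulfs the large half-disk $\{d'\cdot(x-x_U)<-2\eps\}\cap B_1$, while any second non-compactly contained component must live in the thin strip $\{|d'\cdot(x-x_U)|\le 2\eps\}\cap B_1$ of width $O(\eps/|d'|)$; a symmetric statement holds for $\{u_m<l_{x_U}\}$. In particular this thin second component stretches from a point in $U$ (hence close to $x_0$) all the way to $\partial B_1$.

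To close, I plan to show that the coexistence of the ``big'' component and the thin second component on both sides of $l_{x_U}$ forces, on each sphere $\partial B_\rho(x_0)\subset B_1$ with $\delta'\le\rho\le 1/4$ (with $\delta'=\delta'(\eps)\to 0$), the existence of one point at which $\nabla u_m$ is close to $p_1$ and one point at which $\nabla u_m$ is close to $p_0$. Composing with a smooth cutoff $\mathcal{G}:\R^2\to[0,1]$ that vanishes near $p_0$, equals $1$ near $p_1$, and has $|\nabla\mathcal{G}|$ supported in $\mathcal{B}_r(q_*)$ — so that on any arc of $\partial B_\rho(x_0)$ connecting these two points one has $\int|\nabla(\mathcal{G}(\nabla u_m))|\,ds\ge 1$ — and then running the arc-length/Cauchy--Schwarz/integration-in-$\rho$ computation of Lemma \ref{A2} produces the reverse bound
\[
\int_{B_{1/2}\cap(\nabla u_m)^{-1}(\mathcal{B}_r(q_*))}|D^2u_m|^2\,dx\ge c\log\frac{1}{2\delta'},
\]
which, for $\eps$ (hence $\delta'$) small enough in terms of $\delta$ and $F$, contradicts the uniform upper bound from the previous paragraph.

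The principal obstacle is the ``pointwise two-value'' statement on every sphere $\partial B_\rho(x_0)$: the ``near $p_1$'' point requires controlling that the thin component of $\{u_m>l_{x_U}\}$ emanating from $x_U$ carries gradient values close to $p_1$ not only in a vanishing neighborhood of $x_U$ but on every intermediate scale, while the ``near $p_0$'' point requires a maximum-principle/integral-average argument in the spirit of Lemma \ref{Onprel} exploiting the smallness of $u_m-p_0\cdot x$. Making these two statements uniform in $m$, and reconciling them with the thin-strip topology produced by Theorem \ref{components}, is the technically most delicate step of the proof.
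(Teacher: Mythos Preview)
Your plan diverges from the paper's proof and the step you flag as the ``principal obstacle'' is a genuine gap, not just a technicality. The issue is structural: the only way you have to propagate gradient information to every circle $\partial B_\rho(x_0)$ is the maximum principle applied to a \emph{directional} derivative $\partial_{e} u_m$ with $e=d/|d|$. That gives you, on each circle, a point where $\nabla u_m\cdot e$ is large and one where it is small --- i.e.\ points whose gradients lie in two complementary \emph{half-planes}. To turn this into the inequality $\int_{\partial B_\rho}|\nabla(\mathcal G(\nabla u_m))|\,ds\ge 1$ you would need $\mathcal G$ to depend only on $p\cdot e$, so that $\nabla\mathcal G$ is supported in a \emph{strip} $\{c_0<p\cdot e<c_1\}$. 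But a strip through $q_*$ necessarily meets $\partial N$ (since $N$ is bounded), and you have no a~priori control placing $\nabla u_m(B_1)\cap\{c_0<p\cdot e<c_1\}$ inside a set where $D^2F_m$ is uniformly elliptic; the Caccioppoli bound you invoke (which, for the $F_m$, must be the compactly supported version from the localization theorem, not Proposition~\ref{Cacc3} itself) is only available on balls $\mathcal B_r(q_*)\Subset N\setminus\mathcal Q$. Conversely, if you keep $\nabla\mathcal G$ supported in the ball $\mathcal B_r(q_*)$, the directional-derivative information no longer forces $\mathcal G(\nabla u_m)$ to take both values $0$ and $1$ on each circle. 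The thin component of $\{u_m>l_{x_U}\}$ carries no gradient information beyond what the maximum principle already gives, so it does not rescue the argument.

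The paper avoids this circle-by-circle gradient control entirely. It first reuses the barrier argument from Proposition~\ref{flat1} (which does not require $p_0\in N$) to show $\nabla u_m(B_{1/2})\cap\mathcal B_\rho(p_1)=\emptyset$ for every ball $\mathcal B_{2\rho}(p_1)\Subset N$ not containing $p_0$; combined with the localization theorem and Proposition~\ref{compact} this yields $\nabla u_m(B_{3/4})\subset\mathcal N_\delta$. Then, after the same change of coordinates ($\nabla u_m(x_0)=0$, $p_0=\alpha e_2$) and the same thin-strip decomposition coming from Theorem~\ref{components}, it compares $u_m$ with the explicit quadratic $w(x)=\tfrac{\alpha}{2}(x_2-20x_1^2)+2\eps$ on $\overline R\setminus U$. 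The minimum of $u_m-w$ is negative, cannot occur on $\partial U$ (monotonicity of $w$ in $x_2$), and so occurs at an interior point $z_0$ with $\nabla u_m(z_0)=\nabla w(z_0)$; one checks $|z_0|\le\sqrt{\eps/\alpha}$, hence $\nabla u_m(z_0)\to p_0/2$ as $\eps\to 0$. Since $z_0\in\overline R$ forces $\nabla u_m(z_0)\in\mathcal N_\delta$, this gives $p_0/2\in\mathcal N_{2\delta}$ and hence $[0,p_0]\subset\mathcal N_{4\delta}$. The barrier produces a \emph{single} interior point with a computable gradient --- no per-radius statement is needed.
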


In order to apply the localization Theorem near the origin (see
\eqref{hardassumption}), we need to find a ball $\mathcal{B}_r(p),
p\in \overline{N},$ that does not intersect the image $\nabla u_m
(B_r)$ (with $r$ small and $\mathcal{B}_r(p)$ depending also on
$u$). This is not always possible.

The next Proposition, which is key in proving Theorem 1.3, states
a condition which guarantees the existence of such a ball in a
neighborhood of a side of $N$. Its proof relies on the previous
flatness results.

\begin{prop}\label{segments}Assume $0 \in (p_1,p_2)$ and $e_2$ is
normal to $(p_1,p_2)$ and points inside $N$. Let $0\in [a_1,a_2]
\subset (p_1,p_2)$ and assume that for each $r>0$ and $p\in
[a_1,a_2],$ there exists a sequence of $m \rightarrow \infty$ such
that
$$\nabla u_m(B_r) \cap \mathcal{B}_r(p)
\neq \emptyset.
$$
Then $u$ is constant on a segment of direction $e_2$ connecting
$0$ and $\partial \Omega$. Precisely, there exists $\tilde{x}= s
e_2 \in
\partial \Omega$ such that
$$u(x) = u(0), \quad \text{for all} \ x \in [0,\tilde{x}] \subset \overline{\Omega}.$$
\end{prop}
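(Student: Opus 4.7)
The plan is to combine the localization Theorem~\ref{localization}, the flatness Proposition~\ref{flat2}, and a continuation argument. First I would show that along a suitable subsequence, $\nabla u_m$ is confined to a thin neighborhood of the side $[p_1,p_2]$ throughout a ball $B_\delta$ around~$0$, which translates into $u$ being constant along $e_2$-segments inside $B_\delta$. Then I would propagate this constancy along the full $e_2$-direction segment up to $\partial\Omega$ via a maximality argument.

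For the first half, by Proposition~\ref{compact}, $\nabla u_m(B_1)$ lies in any prescribed neighborhood of $\overline N$ for $m$ large, so one may fix $p_0$ outside $\overline N$ with $\nabla u_m(B_1)\cap\mathcal{B}_\rho(p_0)=\emptyset$, making Theorem~\ref{localization} applicable. Choosing a finite $\eta$-net $\{q_1,\dots,q_L\}\subset[a_1,a_2]$, applying the hypothesis at each $q_l$, and diagonally extracting yields $m$'s along which $\nabla u_m(B_\delta)$ meets $\mathcal{B}_\eta(q_l)$ for every $l$. Since $|a_1-a_2|>0$, this rules out the alternative $\nabla u_m(B_\delta)\subset\mathcal{B}_\eps(p)$ in Theorem~\ref{localization}, leaving $\nabla u_m(B_\delta)\subset\mathcal{N}_\eps$; a connectedness argument (the image $\nabla u_m(B_\delta)$ being a continuous image of a connected set that meets $\eta$-balls around points spread densely along $[a_1,a_2]$) pins it to an $O(\eta)$-tubular neighborhood of $[p_1,p_2]$. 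Since $e_2$ is perpendicular to $(p_1,p_2)$, this gives $|\nabla u_m\cdot e_2|\le C\eta$ on $B_\delta$; integrating along $e_2$ and sending first $m\to\infty$ (using $u_m\to u$ uniformly) and then $\eta\to 0$ yields $u(se_2)=u(0)$ for $s\in[0,\delta]$.

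To extend to $\partial\Omega$, let $T=\sup\{s\ge 0 : te_2\in\overline\Omega,\ u(te_2)=u(0)\ \forall t\in[0,s]\}$; the previous step gives $T>0$. Assume for contradiction $y_0=Te_2\in\Omega$. I would rerun the first half at $y_0$ in place of $0$. The prerequisite is the smallness $|u_m-u(0)|\le\eps$ on a genuine ball $B_r(y_0)$, which I would establish by a competitor comparison: the functions $\min\{u,u(0)+\eps\}$ and $\max\{u,u(0)-\eps\}$ are admissible in problem~$(P)$ (since $0\in\overline N$), and minimality of $u$ together with $u\equiv u(0)$ on $[0,T]e_2$ forces smallness of $u-u(0)$ in a genuine neighborhood of $y_0$. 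Proposition~\ref{flat2} applied to $u_m$ with $p_0=0\in\partial N$ then places $\nabla u_m$ near $\partial N$ on $B_{r/2}(y_0)$, and Theorem~\ref{components} applied to the smooth $u_m$ precludes the emerging constancy segment from bending away from the $e_2$-direction. The argument of the first two steps relocated at $y_0$ then extends constancy past $Te_2$, contradicting maximality, so $Te_2\in\partial\Omega$ as required. The main obstacle is precisely this propagation step: bridging a segment-wise statement about $u$ to a ball-wise smallness that can feed into Proposition~\ref{flat2}, which is exactly where the two-dimensional ingredients of the paper (Theorem~\ref{components} and the flatness theorems) play their essential role.
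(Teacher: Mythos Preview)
Your proposal has a fundamental gap at the very first step. Theorem~\ref{localization} explicitly requires $p_0\in\overline{N}$; choosing $p_0$ \emph{outside} $\overline{N}$ does not satisfy its hypothesis, and looking at the proof of that theorem one sees why: the iteration is seeded by a covering ball $\mathcal{B}^k_r\subset N\setminus(\mathcal{N}_\eps\cup\bigcup\mathcal{B}_\eps(q_i))$ that $\nabla u_m$ avoids. A ball centered outside $\overline{N}$ is automatically avoided (by Proposition~\ref{compact}) but gives no such seed. In fact, the situation of Proposition~\ref{segments} is \emph{precisely} the case where the hypothesis~\eqref{hardassumption} of the localization theorem cannot be verified along the side $[p_1,p_2]$; this is stated in the paper just before the proposition. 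So you cannot simply invoke Theorem~\ref{localization}. Even if you could reach the conclusion $\nabla u_m(B_\delta)\subset\mathcal{N}_\eps$, your connectedness argument does not confine the image to a tube around $[p_1,p_2]$: $\mathcal{N}_\eps$ is a connected annular region, and nothing prevents $\nabla u_m(B_\delta)$ from wrapping around other sides of $\partial N$.

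The propagation step is also not sound. Your competitors $\min\{u,u(0)+\eps\}$ and $\max\{u,u(0)-\eps\}$ are not admissible in $(P)$ because they do not match $\varphi$ on $\partial\Omega$, so the asserted smallness of $u-u(0)$ near $y_0$ does not follow. And ``rerunning the first half at $y_0$'' presupposes the hypothesis of the proposition (gradients of $u_m$ hitting every small ball along $[a_1,a_2]$) holds with $B_r$ replaced by $B_r(y_0)$, which you have not established. The paper avoids a two-stage ``local then propagate'' scheme altogether: it argues by contradiction, assuming no $e_2$-segment from $0$ reaches $\partial\Omega$, which produces a rectangle $R\Subset\Omega$ with $u>0$ on the top edge and $u<0$ on the bottom edge. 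Lemmas~\ref{R} and~\ref{defCp} (built on Theorem~\ref{components}) then show the sets $\{u=p\cdot x\}$ for $p\in[a_1,a_2]$ have a very rigid convex/concave structure, from which one extracts a differentiability point $x^*$ with $\nabla u(x^*)\cdot e_2>\gamma>0$ and simultaneously a nearby point where $\nabla u_m$ is close to $0\in\partial N$; this pair contradicts the flatness Propositions~\ref{flat1}--\ref{flat2}. The essential input you are missing is this level-set geometry coming from Lemma~\ref{R}.
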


In order to prove Proposition \ref{segments} we will need the
following Lemma.

\begin{lem}\label{R} Assume $0 \in \partial N,$ $e_2$ points inside $N$,
$$R:= \{|x_1|< \delta, |x_2| < 1\} \Subset \Omega,$$ and
$$u(x)> 0, \ x \in \overline{R} \cap \{x_2=1\}, \quad u(x)< 0, \ x \in \overline{R} \cap \{x_2=-1\}, \quad
u(0)=0.$$ Assume that for each $r>0,$ there exists a sequence of
$m \rightarrow \infty$ such that
$$\nabla u_m(B_r) \cap \mathcal{B}_r(0)
\neq \emptyset. $$ Then either the set $$\{u=0\}\cap \{x_1 > 0
\}$$ or the set $$\{u=0\}\cap \{x_1 < 0 \}$$
 is given by the region between a convex graph (from
above) and a concave graph (from below) in the $e_2$ direction.
\end{lem}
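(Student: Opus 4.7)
The plan is to combine the flatness Proposition \ref{flat2} with the level-set structure theorem (Theorem \ref{components}) and then pass to the limit $m\to\infty$. The role of the flatness result is that since $0\in\p N$, wherever $u_m$ is uniformly small on a ball, the segment $[\nabla u_m(x),\,0]$ sits in a $\delta$-neighborhood of $\p N$. Because $u(0)=0$ and $u_m\to u$ uniformly, on any small ball $B_\tau$ around $0$ we have $|u_m - 0\cdot x|\le \eps(\tau)$ with $\eps(\tau)\to 0$, so Proposition \ref{flat2} forces $\nabla u_m$ into a thin neighborhood of the arm(s) of $\p N$ through $0$ once $m$ is large. (Note that the hypothesis of the lemma is compatible with this picture: the existence of points where $\nabla u_m$ approaches $0\in\p N$ is precisely what prevents Theorem \ref{localization} from pushing $\nabla u_m$ away from $\p N$ near $0$.)

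Next, I would analyze the level set $\{u_m=c_m\}$ for $c_m:=u_m(0)\to 0$. By Proposition \ref{compact} the $u_m$ are smooth and solve a uniformly elliptic linear equation, so Theorem \ref{components} applies: the components of $\{u_m>c_m\}$ and $\{u_m<c_m\}$ are not compactly supported in $R$. Combining this with the sign conditions on the top and bottom edges of $R$, the component $\Sigma_m^+$ of $\{u_m>c_m\}$ reaching $\{x_2=1\}\cap\overline R$ and the component $\Sigma_m^-$ of $\{u_m<c_m\}$ reaching $\{x_2=-1\}\cap\overline R$ are well-defined, and their shared boundary passes arbitrarily close to $0$.

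The crucial step is a convexity argument for the approximations: on at least one of the half-rectangles $R\cap\{x_1>0\}$ or $R\cap\{x_1<0\}$, I would show that $\p\Sigma_m^+\cap R$ is the graph of a convex function of $x_1$ in the $e_2$ direction, and $\p\Sigma_m^-\cap R$ is a concave graph. The convexity originates in the convexity of $\p N$: if $\nabla u_m$ were exactly on $\p N$, then along the side of $\p N$ through $0$ the function $u_m$ would agree with an affine competitor $p\cdot x+\text{const}$, and sliding-plane comparison with such competitors (which are themselves minimizers because $F$ is affine on the sides of $N$) yields the graph convexity. In the quantitative setting of Proposition \ref{flat2} the same argument propagates to the $\delta$-neighborhood, at the price of a small error controlled by $\eps(\tau)$. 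The ``either/or'' alternative reflects the fact that when $0$ sits on (or near) a vertex of $N$, only one of the two arms of $\p N$ meeting at $0$ produces a component of $p\cdot e_2$ compatible with the imposed sign of $u$ on the top and bottom of $R$.

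Finally, I would pass to the limit using uniform convergence $u_m\to u$: the convex (resp.\ concave) graphs bounding $\p\Sigma_m^\pm$ are uniformly Lipschitz on slightly smaller rectangles, so along a subsequence they converge locally uniformly to a convex (resp.\ concave) limit graph, and $\{u=0\}$ on the chosen side of $\{x_1=0\}$ is sandwiched between them. The main obstacle is the convexity extraction for the $u_m$: translating the geometric convexity of $\p N$ in $p$-space into convexity of level curves in $x$-space requires a careful sliding-plane argument using affine competitors $p\cdot x+c$ together with the near-degeneracy of $F_m$ along $\p N$. Choosing the correct half-rectangle — which is what turns the conclusion into an ``either/or'' rather than a two-sided statement — is the second delicate geometric input and depends on the position of $0$ on $\p N$ relative to its vertices.
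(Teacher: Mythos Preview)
Your proposal has a genuine gap in the ``crucial step,'' and it also misidentifies both the mechanism of convexity and the source of the either/or alternative.

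In the paper, the either/or does not come from the position of $0$ on $\partial N$ relative to its vertices; it comes from Theorem \ref{components} applied to the tangent plane $l_m(x)=u_m(x_m)+\nabla u_m(x_m)\cdot(x-x_m)$ at a point $x_m\to 0$ with $\nabla u_m(x_m)\to 0$. One of the components of $\{u_m<l_m\}$ in $\overline R$ does not contain the bottom edge $\{x_2=-1\}$ and therefore must exit through one of the lateral sides $x_1=\pm\delta$; whichever side it reaches determines the half-rectangle on which the conclusion holds. Your use of the level set $\{u_m=u_m(0)\}$ instead of the tangent-plane set loses exactly this ``thin'' second component, which is the key geometric object in the rest of the argument.

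The convexity itself is not obtained by translating the convexity of $\partial N$ in $p$-space into convexity of level curves in $x$-space, and Proposition \ref{flat2} plays no role. The paper argues directly on $u$ (not on $u_m$) by comparison with \emph{linear} functions in $x$: defining $\underline h(t),\overline h(t)$ so that $[\underline h(t),\overline h(t)]=\{u=0\}\cap\{x_1=t\}$, one first shows that for $c\in(a,b)$ the segment $[\overline h(a),\overline h(b)]$ lies above $\underline h(c)$. This is a contradiction argument: if it failed, a tilted linear function $l$ separates $[\overline h(a),\overline h(b)]$ from $\underline h(c)$, and comparing $u_m$ with $l_m+\eps l$ on $\overline{R_{ab}}\setminus U_m$ (where $U_m$ is the ``big'' component containing the bottom edge) forces the minimum onto $\partial U_m$; but then the polygonal line in the thin component, which crosses the vertical through that minimum point, contradicts the resulting inequality since $l$ is increasing in $e_2$. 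A second linear comparison then upgrades this ordering to genuine convexity/concavity of $\overline h$ and $\underline h$. Your sliding-plane idea using ``affine competitors $p\cdot x+c$ with $p$ on the side of $\partial N$'' does not produce this; in particular, $\nabla u_m$ lying near $\partial N$ does not make $u_m$ close to an affine function on the scale of $R$, so the comparison you describe has no leverage away from the origin.
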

\begin{proof}The lemma holds trivially if $u$ is linear.
Assume $u$ is not linear. Then, for $m$ large enough, also $u_m$
is not linear. Then, by the assumptions together with Theorem
\ref{components}, we have that there exists $x_m \rightarrow 0$
with $\nabla u_m (x_m) \rightarrow 0$ such that the set $$\{u_m <
l_m := u_m(x_m) + \nabla u_m (x_m)\cdot (x-x_m)\}$$ has at least
two distinct components in $\overline{R}.$ One of the components
that does not contain the segment $\overline{R} \cap \{x_2=-1\}$
must intersect one of the lateral sides, say $x_1=\delta$ (for
infinitely many $m$'s.) Then we can find a polygonal line
connecting any neighborhood of $x_m$ with $x_1= \delta$ that is
included in this component.

Now, for each $t \in (0,\delta]$ we define $\underline{h}(t),
\overline{h}(t)$ to be the points in $\R^2$ on the line $x_1=t$
such that $$[\underline{h}(t), \overline{h}(t)]= \{u=0\} \cap
\{x_1=t\},
$$ (notice that $u$ is increasing in the $e_2$ direction.)
Let $c \in (a,b) \subset(0, \delta]$.

\

\textbf{Claim.} The segment $[\overline{h}(a), \overline{h}(b)]$
is above $\underline{h}(c)$ in the $e_2$ direction.

\

Indeed, assume by contradiction that our claim does not hold.
Then, there exists a linear function $l$ increasing in the $e_2 $
direction, such that
$$[\overline{h}(a), \overline{h}(b)] \subset \{l < -1\}, \quad \underline{h}(c) \in \{l>1\}.$$

Let $R_{ab} :=\{a < t < b , |x_2|< 1\}$. Denote by $U_m$ the
connected component of $\{u_m < l_m\}$ in $\overline{R_{ab}}$ that
contains the segment $\overline{R_{ab}} \cap \{x_2=-1\}.$ We
compare $u_m$ and $l_m+\eps l$ in the set $\overline{R_{ab}}
\setminus U_m,$ with $\eps$ small enough depending on $u$ and $l$.
We have
$$u-\eps l >0  \ \text{on} \ \overline{R_{ab}} \cap \{x_2=1\}$$ and
$$u-\eps l >0  \ \text{on} \ \{u\geq -\eps/2\} \cap (\{x_1=a\} \cup \{x_1=b\}).$$

Notice that, since $u_m \rightarrow u, l_m \rightarrow 0$
uniformly, we have that $$\{u < -\eps/2\} \cap
\overline{R_{ab}}\subset U_m$$ for all $m$ large enough, thus
$$u_m - l_m -\eps l
> 0 \quad \text{on $\partial R_{ab} \setminus U_m$.}$$ On the other
hand, $u$ is strictly negative on the segment $\{x_1=c\} \cap \{l
<0 \} \cap \overline{R_{ab}}$. Thus this segment is included in
$U_m$, which implies that $$u_m - l_m -\eps l <0 \quad \text{on} \
\p U \cap \{x_1=c\} \cap R_{ab}.$$

Hence the minimum of $u_m-l_m-\eps l$ in $\overline{R_{ab}}
\setminus U_m$ is negative and by the maximum principle it occurs
at some point $x_0 \in
\partial U_m \cap R_{ab}.$ Thus, \begin{equation}\label{minimum}u_m - l_m \geq \eps(l -l(x_0))
\quad \text{in $\overline{R_{ab}} \setminus U_m$.}
\end{equation}

Recall that there exists a polygonal line included in
$(\overline{R_{ab}} \setminus U_m) \cap \{u_m < l_m\}$ that
connects the lines $x_1=a$ and $x_2=b$. Now the right-hand side of
\eqref{minimum} is increasing in the $e_2$ direction and we obtain
a contradiction at a point where $x_0 + se_2, s\geq 0$ intersects
this polygonal line. Thus the claim is proved.

\

Next we prove that $\overline{h}(t)$ is a convex curve and
$\underline{h}(t)$ is a concave curve, if $t>0.$ Indeed, let $0 <
t_1< t_2 \leq \delta$, and let $Q$ be the convex set generated by
$\overline{h}(t_i), \underline{h}(t_i), i=1,2.$ From the claim
above we see that $$Q \cap [\underline{h}(t),\overline{h}(t)] \neq
\emptyset, \quad t_1 \leq t \leq t_2.
$$ Thus, $$u \leq 0 \quad \text{on} \ [\underline{h}(t_1),\underline{h}(t_2)].$$
Let $l'$ be the linear function that is $0$ on the line passing
through $\underline{h}(t_i) - \gamma e_2, i=1,2$ and has slope
$\eps$ in the $e_2$ direction. Clearly, if $\eps$ is small
depending on $u$ and $\gamma$, then $u \leq l'$ on the boundary of
$\{t_1 < x_1< t_2, x_2 \geq -1\} \cap \{l' \leq 0\}.$ Hence the
same inequality is true in the interior. Thus
$$u < 0, \quad \text{below} \
[\underline{h}(t_1),\underline{h}(t_2)] - \gamma e_2.$$ By letting
$\gamma$ tend to $0$ and repeating the same argument from above,
we find that $[\underline{h}(t), \overline{h}(t)] \subset Q.$

\end{proof}

Let $[a_1,a_2] \subset (p_1,p_2)$ with $e_2$ normal to $(p_1,p_2)$
and pointing inside $N$. For all $p \in [a_1,a_2]$ with the
property that for each $r>0$ there exists a sequence of $m
\rightarrow \infty$ such that
$$\nabla u_m(B_r) \cap \mathcal{B}_r(p)
\neq \emptyset,
$$ we define
\begin{equation}\label{Cp}
C_p:= \{u=p \cdot x\}.\end{equation}

Assume that $R:= \{|x_1|< \delta, |x_2| < 1\} \subset \Omega$ with
$$u> 0 \ \text{on} \ \overline{R} \cap \{x_2=1\}, \quad u< 0, \ \text{on} \  \overline{R} \cap \{x_2=-1\}, \quad
u(0)=0.$$ Then for each $p \in [a_1,a_2]$, possibly by taking
$\delta$ smaller, we also have $$u(x)> p\cdot x, \ \text{on} \
\overline{R} \cap \{ x_2=1\}, \quad u(x)< p\cdot x, \ \text{on} \
\overline{R} \cap \{ x_2=-1\}.
$$
Thus applying Lemma \ref{R} (with the origin replaced by $p$) for
each slope $p$ as above, either the set
$$C_p^+:= C_p \cap \{0<x_1 < \delta \}$$ or the set $$C^-_p: = C_p
\cap \{0< x_1 < \delta\}$$ is given by the region between a convex
function and a concave function. Let $A^+$ (resp. $A^-$) be the
set of $p \in [a_1,a_2]$ such $C_p^+ $ (resp. $C_p^-$) is given by
such region.

With this notation, we state and prove the following Lemma which
will be used for the proof of Proposition \ref{segments}.

\begin{lem}\label{defCp}Let $p \in A^+$ and $\alpha \in (0, \delta)$.
Then, for any neighborhoods $V$ of $C_p^+ \cap \{x_1=\alpha\}$ and
$W$ of $p$, there exist a point $\tilde{x} \in V$ such that
$\nabla u_m (\tilde{x}) \in W.$
\end{lem}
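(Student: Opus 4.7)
The plan is to argue by contradiction: assume that along some subsequence $\nabla u_m(V)\cap W=\emptyset$, where after shrinking $W=\mathcal{B}_r(p)$ we have $|\nabla u_m-p|\geq r$ on $V$. Set $v_m:=u_m-p\cdot x$ and $v:=u-p\cdot x$; then $v_m\to v$ uniformly on $\overline{\Omega}$, $v\equiv 0$ on $C_p^+$, while $v>0$ strictly above the convex graph $\overline{h}$ and $v<0$ strictly below the concave graph $\underline{h}$ inside the strip $\{0<x_1<\delta\}$. Note that $v_m$ solves the smooth uniformly elliptic equation $(F_m)_{ij}(\nabla u_m)(v_m)_{ij}=0$ in $\Omega$.

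Pick $z\in C_p^+\cap\{x_1=\alpha\}$ lying in the interior of $C_p^+$ as a planar region, i.e. with $\underline{h}(\alpha)<\overline{h}(\alpha)$ (the degenerate case is handled by a perturbation in $\alpha$). Then $v\equiv 0$ on some ball $U=B_\rho(z)\subset V$, so $v_m\to 0$ uniformly on $\overline{U}$. Since $v$ changes sign, $v_m$ is not linear for large $m$, and Theorem \ref{components} applied to $v_m$ with the neighborhood $U$ yields a point $x_U\in U$ such that each of $\{v_m>l_U\},\{v_m<l_U\}$ has at least two connected components in $\Omega$, not compactly supported and intersecting $U$, where $l_U(x)=v_m(x_U)+\nabla v_m(x_U)\cdot(x-x_U)$ is the tangent plane of $v_m$ at $x_U$.

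The crux is to show $\nabla v_m(x_U)\to 0$. Since $\sup_U|v_m|\to 0$, one has $v_m(x_U)\to 0$; suppose by contradiction that along a further subsequence $\nabla v_m(x_U)\to q$ with $|q|\geq r$. Then $l_U$ converges uniformly on compacts to the nontrivial linear function $l(x)=q\cdot(x-z)$, and $\{v>l\}\cap U=\{q\cdot(x-z)<0\}$ is a single half-ball, a lone connected component near $z$. The two components of $\{v_m>l_U\}$ guaranteed by Theorem \ref{components} intersect $U$ in two connected pieces of $\{v_m>l_U\}\cap U$ (since components of the ambient set cannot share a piece), yet in the limit these pieces collapse to the single half-ball. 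The non-compact-support property provides the obstruction: for each $m$ the two components exit $\Omega$ through distinct regions of $\partial\Omega$, determined in the limit by the global sign structure of $v$ together with the convex/concave geometry of $\partial C_p^+$ on $\{0<x_1<\delta\}$, and this combinatorial separation cannot survive a collapse to one local component. Hence $\nabla v_m(x_U)\to 0$, so for $m$ large enough $\tilde{x}:=x_U\in V$ satisfies $\nabla u_m(\tilde{x})\in W$, contradicting the standing assumption.

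The main obstacle is the topological bookkeeping in the collapse argument, since in two dimensions disjoint connected subsets can a priori Hausdorff-converge to a single limit set without formal contradiction. The rigorous argument must exploit the non-compactly-supported property of the four components together with the polygonal-line connectivity analysis from the proof of Lemma \ref{R}, using that the convex-concave structure of $\partial C_p^+$ forces each component to exit $\Omega$ through a specific, distinct portion of $\partial\Omega$ — an invariant that is incompatible with a single local half-ball in the limit.
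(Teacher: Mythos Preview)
Your argument has a genuine gap, and the paper's proof takes a completely different and much more direct route.

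The gap is exactly the one you flag yourself: the ``collapse'' argument in the third paragraph is not a proof. Knowing that $\{v_m>l_U\}$ has two components meeting $U$ while $\{v>l\}\cap U$ is a single half-ball is simply not a contradiction. One of the two components could shrink away, or both could merge in the limit; the non-compact-support property gives no usable invariant here, because the convex/concave structure of $\partial C_p^+$ says nothing about how components of $\{v_m>l_U\}$ exit $\Omega$ when $\nabla l_U$ is bounded away from $0$. Your final paragraph is a hope, not an argument. There is also a secondary problem: your reduction to an interior point $z$ of $C_p^+$ fails when $\underline h\equiv\overline h$ on the whole interval, which can certainly happen (then $C_p^+$ is a curve and there is no $\alpha$ to perturb to).

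The paper's argument avoids all of this. It does not argue by contradiction and does not invoke Theorem~\ref{components} again. Instead it reuses the output of Lemma~\ref{R}: there one already has points $x_m\to 0$ with $\nabla u_m(x_m)\to p$, and a ``thin'' connected component $\widetilde U_m$ of $\{u_m<l_m\}$ (with $l_m$ the tangent plane at $x_m$) that contains a polygonal line crossing the full strip $\{a<x_1<b\}$. Since $u_m\to u$ and $l_m\to p\cdot x$ uniformly, $\widetilde U_m$ is eventually contained in any prescribed neighborhood of $C_p^+$. Now minimize
\[
u_m - l_m + \tfrac12(x_1-\alpha)^2
\]
over $\widetilde U_m\cap\{a\le x_1\le b\}$. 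The minimum is negative (the polygonal line sits in $\{u_m<l_m\}$) and is attained at an interior point $\tilde x$, where $\nabla u_m(\tilde x)=\nabla l_m-(\tilde x_1-\alpha)e_1$. Taking $b-a$ small forces $\tilde x_1$ close to $\alpha$, so $\nabla u_m(\tilde x)$ is close to $\nabla l_m\to p$, and $\tilde x\in\widetilde U_m\subset V$. This constructs the desired point directly, with no topology needed.
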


\begin{proof} Without loss of generality we assume $p=0 \in A^+$.
We refer to the proof of the previous lemma. We can assume that
$u_m$ is not linear, otherwise the statement is trivial. Let
$\alpha \in (a,b) \subset (0,\delta)$, and let us focus on the
connected component $\widetilde{U}_m$ of $\{u_m < l_m\}$ which
contains the polygonal line connecting $x_1=a$ with $x_1=b$. Since
$u_m \rightarrow u, l_m \rightarrow 0$ uniformly, we obtain that
$\widetilde{U}_m$ is in any neighborhood of $C_p$, if $m$ is large
enough. Now, consider the function $u_m-l_m
+\frac{1}{2}(x_1-\alpha)^2$. Then the minimum of this function in
$\widetilde{U}_m$ is negative and is achieved at an interior point
$\tilde{x}$ . The desired conclusion follows by taking the
interval $(a,b)$ sufficiently small.

\end{proof}

We are now ready to exhibit the proof of Proposition
\ref{segments}.

\

\noindent \textbf{Proof of Proposition \ref{segments}.} We take
$u(0)=0$ and use the notation of the previous Lemmas. Assume by
contradiction that $\{u=0\}$ does not contain either of the
segments in the direction $e_2$ connecting $0$ with $\partial
\Omega$. Then there exists $\delta$ small such that $R \Subset
\Omega,$ $u>0$ on $\overline{R} \cap \{x_2=1\}$ and $u<0$ on
$\overline{R} \cap \{x_2=-1\}.$ Therefore, given $[a_1,a_2]
\subset (p_1,p_2)$, we can define $A^+$ and $A^-$ as above (see
discussion before Lemma \ref{defCp}.)

Since $\overline{A^+} \cup \overline{A^-} = [a_1,a_2]$, then there
exists an open interval contained in either $\overline{A^+}$ or $
\overline{A^-}$, say $(-\rho e_1,\rho e_1) \subset
\overline{A^+}.$

Now set $$D_t:= \{u = te_1\cdot x\} \cap \{0<x_1<\delta\}, \quad
-\rho < t < \rho.
$$ Notice that if $te_1 \in A^+$ then $D_t$ is exactly $C_{te_1}^+.$

We claim that $D_t$ is precisely the region between two segments
with one end-point on $\{x_1=0\}$ and the other on
$\{x_1=\delta\}.$

Indeed, let $t_ne_1 \in A^+, t_n \uparrow t \in (-\rho,\rho),$
$t_n$ strictly increasing.  Notice that, if $x_1 >0$
\begin{equation}\label{Equal}\{u < tx_1\} = \bigcup_{n} \{u < t_n x_1\}= \bigcup_{n}
\{u \leq t_n x_1\}.\end{equation} Using that $C^+_{t_ne_1}$ is the
region between a convex and a concave graph, we obtain from
\eqref{Equal} that $\partial \{u < tx_1\}\cap \{0< x_1 < \delta\}$
is both a concave and a convex graph. This implies the claim.

Next, choose a point of differentiability $x^*$ for $u$ in the
open set $$\{-\rho e_1\cdot x < u < \rho e_1 \cdot x\} \cap
\{0<x_1<\delta\},$$ such that \begin{equation}\label{gamma}\nabla
u(x^*)\cdot e_2
>\gamma >0.\end{equation}

Set $p^*= \nabla u(x^*).$ Without loss of generality we can assume
$u(x^*)=0$ and thus the set $D_0$ consists of just one segment.
Therefore, if $t_n \uparrow 0$, $t_n$ increasing, $t_n e_1 \in
A^+$, then
\begin{equation}\label{converge}C_{t_ne_1}^+ \cap \{e_1 \cdot (x-x^*)= 0\} \rightarrow
x^*.\end{equation}

Since $x^*$ is a point of differentiability of $u$ and $u_m
\rightarrow u$ uniformly, we obtain that for any $\eps>0$ we can
find $\rho, m$ such that
$$|u_m - p^*\cdot (x-x^*)| \leq \eps \rho \quad \text{in} \
B_\rho(x^*).$$

On the other hand from \eqref{converge} together with Lemma
\ref{defCp}, we can find a point $x_m \in B_{\rho/2}$ such that
$$|\nabla u_m (x_m)| \leq \gamma/2.$$

Then, using \eqref{gamma} we see that the middle point of $[\nabla
u_m(x_m), p^*]$ is outside a $\gamma/4$ neighborhood of $\partial
N \cup \{p^*\}$ (provided that $\gamma$ is chosen small enough
depending on $N$). This contradicts either Proposition \ref{flat1}
(if $p^* \in N$) or Proposition \ref{flat2} (if $p^* \in \p N$),
by choosing $\eps=\eps(\gamma, p^*,N)$ sufficiently small.

 \qed

Now, we are finally ready to present the proof of Theorem 1.3.

\

\noindent \textbf{Proof of Theorem 1.3.} For simplicity take
$x_0=0$. Assume that there are no segments connecting $0$ with
$\partial \Omega$ as in the statement of the Theorem. We wish to
prove that given $\rho
>0$ there exists $\delta$ (depending also on $u$) small such that
$diam(\nabla u(B_\delta)) \leq \rho.$

By Proposition \ref{segments} for each side $[p_i,p_{i+1}]$ and
each subinterval $(a_i,a_{i+1}) \subset [p_i, p_{i+1}]$ there
exist a point $a \in (a_i,a_{i+1})$ and $r>0$ such that $$\nabla
u_m (B_r) \cap \mathcal{B}_r(a) = \emptyset.$$

Clearly we can find $\eps >0$ and points $a_k \in
\partial N$ such that any ball $\mathcal{B}_{\rho/2}(p)$ centered at $p \in \partial
N$ contains one of the balls $\mathcal{B}_{\eps}(a_k)$ and
$$\nabla u_m (B_\eps) \cap \mathcal{B}_\eps(a_k) = \emptyset.$$

We cover the open set $$O = \{x \in N | dist (x,\partial
N)>\eps/2\}$$ with a finite number of balls of radius $\eps/10$
centered in the set. According to the localization Theorem
\ref{localization}, if $\delta$ is small enough depending only on
$F,\eps$ and say $\mathcal{B}_\eps(a_1)$ we find that either
\begin{equation}\label{d1}diam (\nabla u_m(B_\delta)) \leq \eps/5\end{equation} or \begin{equation}\label{n2}\nabla
u_m (B_\delta) \cap O =\emptyset.\end{equation} In the latter case
we know from Proposition \ref{compact} that for $m$ large enough
\begin{equation}\label{n3}\nabla u_m (B_\delta) \ \text{is included in a
$\eps/2$-neighborhood of $N$}. \end{equation} Since $\nabla u_m
(B_\delta)$ is connected, we conclude from the choice of $\eps,$
\eqref{n2} and \eqref{n3} that $$diam(\nabla u_m( B_\delta)) \leq
\rho.$$ The conclusion follows by letting $m \rightarrow \infty.$

\qed

We conclude this Section with the proof of Theorem \ref{convex}.

\

\noindent \textbf{Proof of Theorem \ref{convex}.} For simplicity
assume $p_i=0$, $c=0$ and $\omega=e_2$. Since $u$ is increasing in
a cone of directions around $e_2$ we conclude that $S$ is between
two Lipschitz graphs in the $e_2$ direction. Assume by
contradiction that the lower graph is not concave. Then we can
find two points $z_1$, $z_2$ such that $u(z_i)<0$ and the segment
$[z_1,z_2]$ is tangent from below to $S$. Let $l$ be the linear
function which vanishes at the points $z_i+\eps e_2$ with $\nabla
l \cdot e_2=\eps$. We compare $u$ and $l$ in the set $\{x\cdot z_1
< x_1 < x \cdot z_2\} \cap \{l < 0\} \cap \Omega$. If $\eps$ is
small enough then $u \le l$ on the boundary, hence $u \le l$ in
this set. This implies that $S$ is above the segment $
[z_1,z_2]+\eps e_2$ and we contradict the fact that $[z_1,z_2]$
was tangent to $S$.

\qed

\section{The proof of Theorem \ref{continuous}}

We distinguish two cases, when the polygon $N$ has $n\geq 4$
vertices and when it has only $n=3$ vertices. The latter is more
involved and we only present a sketch of the proof.

\

\begin{lem}Assume $N$ has more that $3$ vertices. Then $H(\nabla u)$ is continuous.\end{lem}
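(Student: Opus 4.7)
The plan is proof by contradiction. Suppose $H(\nabla u)$ is discontinuous at some $x_0\in\Omega$. Since $H$ is continuous on $\overline N$, collapses $\partial N$ to a single point, and is a homeomorphism on $N$, this forces a sequence $y_k\to x_0$ of Lebesgue points of $\nabla u$ with $\nabla u(y_k)\to q\in N$ (interior). Meanwhile, by Theorem \ref{C1} applied at the discontinuity $x_0$, there is a closed segment $[x_0,x_1]\subset\overline\Omega$ in some direction $\nu_i$ perpendicular to a side $[p_i,p_{i+1}]$ along which $u(x)=u(x_0)+p_i\cdot(x-x_0)$, with $p_i$ a vertex of $N$.

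Using $n\geq 4$, I can choose a side $[p_j,p_{j+1}]$ that does \emph{not} meet the vertex $p_i$, meaning $\{p_j,p_{j+1}\}\cap\{p_{i-1},p_i,p_{i+1}\}=\emptyset$, together with an interior point $p^*\in(p_j,p_{j+1})$. By convexity of $N$, $p^*$ lies at positive distance $\gamma>0$ from $[p_{i-1},p_i]\cup[p_i,p_{i+1}]$. The goal is then to produce $r,\rho>0$ and $m_0$ with
\[
\nabla u_m(B_r(x_0))\cap \mathcal{B}_\rho(p^*)=\emptyset\qquad\text{for all }m\geq m_0.
\]
Once this holds, the localization Theorem \ref{localization} yields a uniform modulus of continuity for $H(\nabla u_m)$ on a ball $B_\delta(x_0)$, and the convergence $H(\nabla u_m)\to H(\nabla u)$ in measure from Proposition \ref{convergence} passes this modulus to $H(\nabla u)$, contradicting our assumption.

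To produce the avoidance I would apply the approximate flatness Proposition \ref{flat2} at interior points $\bar x=x_0+s\nu_i$ of the segment with $0<s<|x_1-x_0|$. Since $\nabla u$ is $C^1$ at such $\bar x$ away from the obstacles (by the $C^1$ part of Theorem \ref{C1}) and $\nabla u(\bar x)=p_i$, the function $u$ is flat with respect to $\ell(x):=u(x_0)+p_i\cdot(x-x_0)$ at small scales: $\sup_{B_r(\bar x)}|u-\ell|\leq\eps\, r$ for $r\leq r_0(\bar x)$. Rescaling $u_m$ around $\bar x$ at scale $r$ and using $u_m\to u$ uniformly, the hypothesis of Proposition \ref{flat2} with $p_0=p_i$ holds for all sufficiently large $m$, so $[\nabla u_m(x),p_i]$ stays in a $(\gamma/2)$-neighborhood of $\partial N$ for all $x\in B_{r/2}(\bar x)$. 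Because $p_i$ is a vertex of the convex polygon $N$, this forces $\nabla u_m(x)$ to lie in a $(\gamma/2)$-neighborhood of $[p_{i-1},p_i]\cup[p_i,p_{i+1}]$, hence $\nabla u_m(x)\notin\mathcal{B}_{\gamma/2}(p^*)$. Picking $\bar x$ with $s<r/4$ places $x_0$ inside $B_{r/2}(\bar x)$, which delivers the avoidance on a ball about $x_0$.

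The main obstacle will be controlling how the flatness scale $r_0(\bar x)$ behaves as $\bar x\to x_0$ along the segment: one must guarantee $r_0(\bar x)\geq C s$ for some $C>4$ so that the scale $r$ used in the flatness application dominates the displacement $s$. The assumption $n\geq 4$ enters essentially through the non-adjacent side $[p_j,p_{j+1}]$, which supplies a geometric gap $\gamma$ independent of the (possibly degenerating) flatness scale; for $n=3$ every side meets the vertex $p_i$ and no such gap is available, so a different argument is needed---the content of the next lemma.
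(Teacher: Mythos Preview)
Your approach differs substantially from the paper's and carries a genuine gap that you yourself flag but do not close.

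The paper does not work from a single segment coming out of Theorem \ref{C1}. Instead it argues that if $H(\nabla u)$ is discontinuous at the origin then the avoidance hypothesis \eqref{hardassumption} of the localization Theorem must fail near \emph{every} side of $N$; by Proposition \ref{segments} this produces, for \emph{each} side $[p_i,p_{i+1}]$, a segment from the origin in direction $\pm\nu_i$ on which $u(x)=u(0)+p_i\cdot x$. The proof then fixes a vertex, say $p_2$, looks at the closed angle $\vartheta_2$ between the two segments corresponding to the adjacent sides $[p_1,p_2]$ and $[p_2,p_3]$, shows $H(\nabla u)\to H(p_2)$ inside $\vartheta_2$ (via an auxiliary minimizer $\tilde u$ that equals $u$ in $\vartheta_2$ and is linear outside), and uses $n\ge 4$ to rule out that any of the \emph{other} segments enters $\vartheta_2$: the key is that for $j\ne 1,2$ there is $\lambda\in(0,1)$ with $\lambda p_j\in(p_1,p_3)$, which fails when $n=3$. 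Since consecutive angles cannot overlap they must tile a neighborhood of the origin, and continuity follows.

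The gap in your route is exactly the scale control you isolate. You need $r_0(\bar x)\ge Cs$ with $C>4$ so that $B_{r/2}(\bar x)$ swallows $x_0$, but nothing in hand gives this; a uniform bound $r_0(x_0+s\nu_i)\ge Cs$ as $s\to 0$ is essentially a differentiability statement for $u$ at $x_0$ with derivative $p_i$, which is what is in dispute. There is also a subsidiary error in the setup: you invoke ``$u\in C^1$ away from the obstacles'' at $\bar x$, but the segment $[x_0,x_1]$ lies \emph{on} the obstacle coincidence set, so Theorem \ref{C1} gives no $C^1$ information there, and $\nabla u$ could be discontinuous at every $\bar x$ on the segment. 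Even granting differentiability, linearity of $u$ along the segment only fixes $\nabla u(\bar x)\cdot\nu_i=p_i\cdot\nu_i$, hence $\nabla u(\bar x)\in[p_i,p_{i+1}]$; it need not equal $p_i$. This last point is repairable (apply Proposition \ref{flat2} with $p_0=\nabla u(\bar x)\in[p_i,p_{i+1}]$ instead of $p_i$; the conclusion still forces $\nabla u_m$ near the single side $[p_i,p_{i+1}]$), but the scale issue is not, and without it the argument does not reach $x_0$.
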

\begin{proof}If $H(\nabla u)$ is not continuous at the origin,
then for each side $[p_i,p_{i+1}]$ there exists a segment of
direction $\nu_i$ perpendicular to that side, starting from the
origin  and ending at $x_i \in \partial \Omega$ such that
$u(x)=u(0)+ p_i\cdot x, x \in [0,x_i].$ This follows from the
localization Theorem and from Proposition \ref{segments}.

For simplicity of exposition, assume that $$p_1= |p_1|(-\cos
\theta, \sin \theta), \quad p_2 = 0, \quad p_3= |p_3|(\cos \theta,
\sin \theta), \quad \theta \in (0,\pi/2).
$$ Recall that in our notation $\nu_i$ is either of the two directions perpendicular to $[p_i, p_{i+1}]$.
Let us choose $\nu_1,\nu_2$ pointing inside $N$, that is
$$\nu_1= (\sin \theta, \cos \theta) , \quad \nu_2= (-\sin \theta, \cos \theta).$$

Assume $u(0)=0$, then $u = 0$ on two segments starting at 0 of
directions $\pm \nu_1$ and $\pm \nu_2$ respectively. Denote by
$\vartheta_2$ the closed angle (smaller than $\pi$) generated by
these two directions. We wish to prove that
\begin{equation}\label{Limit}\lim_{x\rightarrow 0, x \in \vartheta_2} H(\nabla
u)=H(0),\end{equation} and moreover that no segment of direction
$\nu_j, j \neq 1,2$ along which $u= p_j \cdot x$ can intersect the
angle $\vartheta_2$.

We distinguish two cases.

\

 \textit{Case 1.} $u =0$ along segments of directions $\nu_i$, $i=1,2$ (or analogously $-\nu_i$, $i=1,2$.)

Since $\nabla u \in \overline{N}$, then $u$ is increasing in the
cone of directions in $\vartheta_2$, thus $u \geq 0$ in
$\vartheta_2$. We also have that $u-p_1\cdot x$ is increasing in
the direction $(\sin(\theta +\delta),\cos (\theta+\delta))$ with
$\delta>0$ small depending on $N$. This implies that near the
origin in $\vartheta_2$ we have
 that $u \leq p_1\cdot x,$  and analogously $u \leq p_3 \cdot x.$ Thus, in $B_\rho \cap
 \vartheta_2$ we have $$0 \leq u \leq \min\{p_1\cdot x, p_3 \cdot
 x\}.$$

 We use the hypothesis $n \ge 4$ to prove that no segment of direction $\nu_j, j \neq 1,2$ in
 which $u=p_j \cdot x$ can intersect the angle $\vartheta_2$.
 Indeed, in this angle near the origin we have $u \leq p \cdot x$ for all $p \in
 [p_1,p_3]$ and also $p\cdot x >0$ when $p \in (p_1,p_3)$. If $u = p_j \cdot x$  on a segment in the interior of
 the angle $\vartheta_2$
 then we reach a contradiction since there exists $0<\lambda<1$ such that $\lambda p_j \in
(p_1,p_3)$ and on this segment $u > \lambda p_j \cdot x \geq u$.

In $B_\rho$ we consider the function $\tilde{u}$ such that
$\tilde{u}=u$ in the angle $\vartheta_2$ and $\tilde{u}=0$ outside
this angle. Then, $\tilde{u}$ is a minimizer for $I$ in $B_\rho$.
This follows from the fact that the gradient of the minimizer with
boundary data $\tilde{u}$ on $\p B_\rho$ belongs to $\overline N$
and therefore the minimizer must be bounded by 0 from below and by
$\max\{p_i \cdot x, 0\}, i=1,3$ from above.

Clearly if $j \geq 3$, then $p_j \cdot \nu_j \neq 0$ while
$\tilde{u}=0$ outside the angle $\vartheta_2$. This implies that
$\tilde{u} \neq p_j \cdot x,$ on a segment of direction $\nu_j$.
Thus we conclude from the localization Theorem and from
Proposition \ref{segments} that $H(\nabla \tilde{u})$ is
continuous and
 $$\lim_{x\rightarrow 0, x \in \vartheta_2} H(\nabla u)=\lim_{x\rightarrow 0} H(\nabla \tilde{u})=H(0).$$

\

 \textit{Case 2.} $u =0 $ along segments of directions  $\nu_1$, and $-\nu_2$ (or analogously $-\nu_1,\nu_2$.)

Then $u = 0$ in $B_\rho \cap \vartheta_2$ and \eqref{Limit}
clearly holds. Also, as in the conclusion of the previous case no
segment of direction $\nu_j$, $j \geq 3$ in which $u=p_j \cdot x$
can intersect $\vartheta_2$.

\

 Since the angles generated by consecutive directions cannot
 overlap, they must cover a neighborhood of the origin and the
 lemma is proved.

\end{proof}

\begin{lem} If $N$ has three sides, $H(\nabla u)$ is continuous.
\end{lem}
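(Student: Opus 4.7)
The plan is to mimic the previous lemma's argument and identify precisely where the hypothesis $n\ge 4$ was used, then to supplement the remaining triangle-specific configurations. Assume by contradiction that $H(\nabla u)$ is discontinuous at some $x_0\in\Omega$; take $x_0=0$ and $u(0)=0$. By the localization Theorem \ref{localization} together with Proposition \ref{segments}, for each side $[p_i,p_{i+1}]$ of $N$ approached by $\nabla u_m(B_r)$ as $r\to 0$ and $m\to\infty$, there is a segment emanating from $0$ in a perpendicular direction $\pm\nu_i$, reaching $\partial\Omega$, on which $u$ is linear with gradient equal to $p_i$ or $p_{i+1}$. Let $k\in\{1,2,3\}$ be the number of sides so approached.

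First I would dispose of the case $k\le 2$. When at most two of the three perpendicular directions arise, the third side of $N$ plays exactly the role that the "fourth vertex" did in the previous lemma: the argument that ruled out segments of a new direction $\nu_j$ intersecting the angle $\vartheta_2$ now uses the fact that a line through $0$ with slope $p_j$ ($j$ corresponding to the missing side) gives the required strict separation, so that the same $\tilde u$ constructed by extending $u$ by $0$ outside $\vartheta_2$ is a valid minimizer and Theorem \ref{localization} plus Proposition \ref{segments} applied to $\tilde u$ forces $H(\nabla\tilde u)=H(\nabla u)$ to be continuous at $0$, a contradiction.

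The genuinely new configuration is $k=3$: all three sides are approached, producing three segments from $0$ in the three perpendicular directions (with chosen signs). After choosing signs, these three segments either meet to partition a punctured neighborhood of $0$ into three closed angular sectors $\vartheta^{(1)},\vartheta^{(2)},\vartheta^{(3)}$, or all three sign choices conspire into a degenerate "half-plane" configuration that can be handled as a limiting case of $k=2$. In each sector $\vartheta^{(j)}$, the bounding segments carry $u=q_a\cdot x$ and $u=q_b\cdot x$ for two distinct vertices of $N$, and $\nabla u\in\overline N$ forces $u$ to satisfy explicit sandwich bounds of the form $\min_{p\in[q_a,q_b]}p\cdot x\le u(x)\le\max_{p\in[q_a,q_b]}p\cdot x$ on $\vartheta^{(j)}\cap B_\rho$; here Theorem \ref{convex} controls the level sets attained at each vertex gradient. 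The plan is then to replace $u$ in two of the three sectors by the appropriate linear function with gradient at the third vertex (the one \emph{not} at an endpoint of the shared boundary segment), producing a competitor $\tilde u$ which is continuous, has gradient in $\overline N$, and whose energy does not exceed that of $u$ by strict convexity of $F$ in $N\setminus \mathcal Q$. The resulting $\tilde u$ is again a minimizer in a small ball, only one side of $N$ is now approached near $0$, and the $k\le 2$ case applies to $\tilde u$, yielding $H(\nabla\tilde u)$ continuous at $0$ and hence the same for $H(\nabla u)$, contradicting the assumption.

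The main obstacle is the construction in the $k=3$ case: unlike Case 1 of the previous lemma, one cannot simply set the competitor equal to $0$ outside a single angle because three distinct vertex-gradients are simultaneously in play, and any replacement must glue continuously across segments where $u$ already has a prescribed gradient from a different vertex. The delicate point is to verify that the piecewise linear extension stays in $\overline N$ on the union of the modified sectors and that its energy does not increase; this uses the specific triangular geometry of $N$ (three vertices imply every pair of sides shares a common vertex) together with the strict convexity of $F$ at interior gradients to absorb the energy difference.
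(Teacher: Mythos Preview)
Your proposal has a genuine gap in the $k=3$ case, and the route you sketch diverges substantially from the paper's argument.

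The core problem is the competitor construction. You claim that replacing $u$ by a linear function with a vertex gradient in two of the three sectors yields a minimizer $\tilde u$ whose $H(\nabla\tilde u)$ is continuous at $0$. But showing that $\tilde u$ is a minimizer requires more than ``energy does not exceed that of $u$'': you must show that $\tilde u$ is the unique minimizer for its own boundary data on $\partial B_\rho$, which needs a barrier argument of the type used in Case~1 of the $n\ge 4$ lemma. That barrier argument relied on trapping the minimizer between $0$ and $\max\{p_i\cdot x,0\}$, and there is no analogue here when two different vertex gradients are in play on the boundary of the modified region. Moreover, even granting that $\tilde u$ is a minimizer, your reduction to $k\le 2$ is unclear: the vertex gradients $p_2,p_3$ you insert each lie on \emph{two} sides of the triangle, so $\nabla\tilde u_m$ near $0$ still approaches every side, and Proposition~\ref{segments} may still produce segments in all three perpendicular directions.

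The paper does not attempt such a construction in the hard case. After peeling off the subcases that do reduce to the $n\ge 4$ analysis (via $\tilde u=\min\{u,p_1\cdot x\}$ when $u$ is linear along $-\nu_1$), the paper is left with the configuration $u=\min\{p_1\cdot x,p_3\cdot x\}$ in $\vartheta_2$ and $u\le 0$ outside, with no further linearity. It then proceeds by an entirely different mechanism: it uses Theorem~\ref{components} (the four-component result) to show that for a suitable $p\in(p_1,p_3)$ and small $\eps$, the set $\{u_m<l\}$ with $\nabla l$ near $\eps p$ can meet $\partial R_{ab}$ in only one component, forcing $\nabla u_m(B_{\eps^3})\cap\mathcal B_{\eps^2}(\eps p)=\emptyset$ and hence enabling the localization Theorem~\ref{localization} directly. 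In the residual degenerate subcase where even this fails (namely $u=0$ on a half-neighborhood of $0$ between a line and $\vartheta_2$), the paper develops a boundary version of the localization theorem on the half-ball $B^-$, runs a blow-up argument to show any limit is linear with gradient $0$, and concludes via the flatness Theorem~\ref{Flat2}. None of these ingredients appear in your plan, and they are precisely what handles the triangle-specific obstruction.
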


\noindent \textit{Sketch of the proof.} We use the same notation
as in the previous proof. The only case that does not follow from
a similar analysis as before, is when $u$ is linear on segments of
directions $\nu_1,\nu_2$ and $-\nu_3$ ($\nu_i$ points inside $N$).
More precisely, $u=\min\{p_1 \cdot x, p_3 \cdot x\}$ in $B_1 \cap
\vartheta_2,$ and $u \leq 0$ outside $\vartheta_2$. We can also
assume that $u$ is not linear in the direction of $\nu_3$. We can
assume further that $u$ is not linear on the segments of direction
$-\nu_1$ (or $-\nu_2$). Otherwise, as in the proof of the previous
Lemma, we can construct another minimizer $\tilde{u}=\min\{u, p_1
\cdot x \}$ which coincides with $u$ on $\{p_1 \cdot x \leq 0\}.$
The continuity of $H(\nabla \tilde{u})$ (and hence of $H(\nabla u
)$) follows as in the case $n \geq 4.$

In the remaining case we wish to prove that the assumption
\eqref{hardassumption} of the localization Theorem holds, hence
the Lemma follows.

First, suppose that there exist a direction $p \in (p_1,p_3)$ and
a rectangle \begin{equation}\label{rectangle}R_{ab}= \{a<x_1<b,
|x_2|<1/2\}, \quad 0\in (a,b) \subset [-1/2,1/2]\end{equation}
such that \begin{equation}\label{uneg}u <0 \ \text{on} \
\partial R_{ab} \cap \{p \cdot x \leq 0\}.\end{equation} Then, for small $\eps$ (depending on $u$) any linear
function $l$ with $$\nabla l \in \mathcal{B}_{\eps^2}(\eps p),
\quad |l(0)|\leq \eps^2$$ has the property that $\{u <l\} \cap
\partial R_{ab}$ consists of one connected component. This
implies that if we approximate $u$ in $R_{ab}$ by functions $u_m$
($u_m=u$ on $\p R_{ab}$), then $$\nabla u_m(B_{\eps^3}) \cap
\mathcal{B}_{\eps^2}(\eps p)=\emptyset,$$ otherwise we contradict
Theorem \ref{components}. Hence the Lemma follows from the
localization Theorem.

The only case when we cannot find one pair $p, (a, b)$ satisfying
\eqref{rectangle}-\eqref{uneg}, is when $u = 0$ in $B_{\rho}$
($\rho$ small depending on $u$) above a line passing through the
origin and below the angle $\vartheta_2$ (recall that $u$ is
increasing in the $e_2$ direction). After a dilation, assume
$\rho=1$. Therefore, we can assume that
$$0 \geq u \geq \tilde{p}\cdot x, \quad \text{in} \ B^-:=B_{1}
\cap \{\tilde{p}\cdot x \leq 0\},
$$ for some $\tilde{p} \in (p_1,p_3)$.

Now one can obtain a localization Theorem in $B^-$ around the
origin, even tough $0$ is a boundary point. Indeed, let $u_m$ be
the approximation for $u$ in $B^-$ ($u_m=u$ on $\p B^-$). Then
$\nabla u_m(x) \in [0,\tilde{p}] $ for all $x \in \p B^- \cap
\{\tilde{p} \cdot x =0\}$. Thus, we can obtain a Caccioppoli-type
inequality at the boundary as in the proof of the localization
Theorem, as long as the function $\eta$ in formula \eqref{Step2}
(the domain of integration is now $B^-$) is 0 on the segment
$[0,\tilde{p}]$ and $\xi \in C^\infty_0(B_{1}).$ This implies that
the iteration argument in the proof of the localization Theorem is
valid provided that $\mathcal{B}_{4r}^k \cap
[0,\tilde{p}]=\emptyset.$

Also, a boundary version of Proposition \ref{segments} holds
because the ``thin'' connected components of $\{u_m < l_m\}$
cannot intersect $\tilde{p} \cdot x =0.$ More precisely, given
$a_i \in \partial N \setminus \{\tilde{p}\}$, one can show that
there exists $r$ depending on $u$ and $a_i$ such that
$$\nabla u_m(B_r \cap B^-)\cap
\mathcal{B}_{r}(a_i) =\emptyset.$$

Thus for any $\delta$, there exists $\eps$ depending on $\delta$
and $u$ such that $\nabla u_m(B_\eps \cap B^-)$ is in a $\delta$
neighborhood of the segment $[0,\tilde{p}].$ Letting $m
\rightarrow \infty$ we obtain the same result for $u.$

Consider now a sequence of blow-up minimizers $$\frac{1}{r_k}
u(r_k x), x \in B_1,$$ that converges to $\overline{u}.$ Clearly,
$\overline{u}$ is still a minimizer for $I$ and by the conclusion
above $\nabla \overline{u}(B^-) \subset [0,\tilde{p}].$ This
implies that $\overline{u}(x)=p \cdot x$ in $B^-$ with $p \in
[0,\tilde{p}]$. If $p \neq 0$, then we reach a contradiction since
$\overline{u}$ is not a minimizer in a neighborhood of points $x$
such that $\tilde{p} \cdot x =0, x \neq 0.$

In conclusion for any $\eps>0$ there exists $r_\eps$ depending on
$u$ and $\eps$ such that $$|\frac{1}{r}u(rx)|\leq \eps, \quad x
\in B^-$$ for all $r \leq r_\eps.$ Now the result follows applying
the flatness Theorem \ref{Flat2} in balls $B_{c|x|}(x)$, for $c$
small enough and $x \in B^-$.

\qed

\section{The proof of the flatness Theorems}

We finally present the proofs of Theorem \ref{Flat1} and Theorem
\ref{Flat2}. Since $u_m \rightarrow u$ uniformly, it suffices to
prove Proposition \ref{flat1} and Proposition \ref{flat2}.

\

\noindent\textbf{Proposition 6.2.} \textit{Assume $B_1 \subset
\Omega$ and
$$\mathcal{B}_{\delta}(p_0) \subset N.$$Then, there exists $\eps$
depending on $\delta, p_0, F,$ such that if $$|u_m - p_0 \cdot
x|\leq \eps, \quad x \in B_1$$ then
$$\nabla u_m (B_{1/2}) \subset \mathcal{B}_{\delta}(p_0).
$$}

\noindent \textit{Proof.} We can assume that $u_m$ is not linear,
otherwise the result is obvious. Also, it suffices to show that
the conclusion of our statement holds at 0, i.e.
$$\nabla u_m (0) \in \mathcal{B}_{\delta}(p_0), \quad \text{for large
$m$.}$$

First notice that \begin{equation}\label{star}\nabla
u_m(B_{2\sqrt{\eps}}(0)) \cap \mathcal{B}_{2\sqrt{\eps}}(p_0) \neq
\emptyset\end{equation} by considering $\min\{\frac{1}{2}|x|^2 +
u_m - p_0 \cdot x \}.$

Let $\mathcal{B}_{2\rho}(p_1) \subset N$ such that $F \in
C^2(\overline{\mathcal{B}_{2\rho}(p_1)}),$ $p_0$ not in
$\mathcal{B}_{2\rho}(p_1)$. We claim that
\begin{equation}\label{claim}\nabla u_m(B_{1/2}) \cap
\mathcal{B}_{\rho}(p_1)=\emptyset, \quad \text{for small $\eps$.}
\end{equation}
Clearly, the proposition follows from the claim, together with
\eqref{star} and the localization Theorem. We are left with the
proof of the claim.

Assume by contradiction that there exists $x_0 \in B_{1/2}$ such
that $$p_2 := \nabla u_m (x_0) \in \mathcal{B}_{\rho}(p_1).$$ We
know that the set
$$\{u_m(x) < l_m: = u_m(x_0)+ p_2\cdot (x-x_0)\}$$ has in
$B_1$ at least two distinct connected components that intersect
any neighborhood of $x_0$.

Notice that by the flatness assumption

$$\{(x-x_0) \cdot (p_0-p_2) < -2\eps\} \subset \{u_m < l_m\}$$
and
$$\{(x-x_0) \cdot (p_0-p_2) > 2\eps\} \subset \{u_m > l_m\}.$$

This implies that one of the connected components of $\{u_m <
l_m\}$ is included in the strip $\{|(x-x_0)\cdot (p_0 -p_2)| \leq
2\eps \}.$

By changing the system of coordinates in the $x$ and $p$ spaces,
we can assume that we have the following situation:
$$p_2 =0, \quad p_0=\alpha e_2, \quad x_0=\pm e_1/4, \quad u_m(x_0)=0$$
$$|u_m - \alpha e_2\cdot x|\leq 2\eps \quad \text{in} \ \overline{R}:= \{|x_1|\leq 1/8, |x_2|\leq 1/8\}, \ \alpha > \rho.$$
Moreover, the set $\{u_m <0\}$ has one connected component in
$\overline{R}$ included in the strip $\{|x \cdot \alpha e_2| \leq
2\eps \}$ that intersects both $\{x_1=\pm1/8\}.$

Let $U$ be the connected component that contains $\overline{R}
\cap \{x_2=-1/8\}.$ In $\overline{R}\setminus U$ we compare $u_m$
with the function $w$ defined by
$$w = \delta' g(v), \quad v(x)= x_2-20x_1^2$$
$$g(v)=e^{kv}-constant, \quad g(-1/8)=0,$$ for some $\delta', k$
to be chosen later.

Now notice that $D^2F_m$ is uniformly elliptic in
$\mathcal{B}_\rho$ with ellipticity constants $\lambda, \Lambda$
depending only on $F.$ Since $$D^2w= \delta' e^{kv}(D^2v + k
\nabla v \otimes \nabla v)$$ we see that if the constant $\delta'$
is chosen small enough so that $\nabla w \in \mathcal{B}_\rho,$
and $k$ is sufficiently large depending only on $\lambda, \Lambda$
we have that
$$Tr(D^2F_mD^2w) \geq \lambda |(D^2w)^+| - \Lambda |(D^2w)^-|>0.$$
Therefore, $w$ is a subsolution and the minimum of $u_m-w$ must
occur on $\partial (\overline{R}\setminus U).$

Notice that by choosing $\delta'$ possibly smaller depending on
$\rho$ and $k$, we get that
$$ \alpha e_2\cdot x  > w \quad \text{on} \ \{x_2=1/8, |x_1|\leq 1/8\} \cup \{x_1=\pm 1/8 , x_2 \geq
-\delta''\},$$ and therefore
$$u_m -w  >0 \quad \text{on} \ \{x_2=1/8, |x_1|\leq 1/8\} \cup \{x_1=\pm 1/8 , x_2 \geq
-2\eps/\rho\},$$ for small $\eps.$ However, $u_m - w <0$ on
$\partial U \cap \{x_1=0\}$. Hence the minimum must occur at some
point $z_0 \in \partial U \cap R.$ Then, $$u_m(x) \geq w(x)
-w(z_0), \quad x \in \overline{R} \setminus U.$$ As in Proposition
\ref{segments}, this is a contradiction since the line $z_0 +
te_2, t\geq 0$ intersects the other connected component of $\{u_m
<0\}$, while on this line the function $w$ is increasing.

\qed

\

\noindent \textbf{Proposition 6.3.}\textit{ Let $p_0 \in \partial
N.$ Then, there exists $\eps$ depending on $\delta, F$, such that
if
$$|u_m - p_0\cdot x| \leq \eps, \quad x \in B_1$$ then
$$[\nabla u_m(x), p_0] \quad \text{is in a $\delta$-neighborhood of $\partial N$, for all $x \in
B_{1/2}.$}$$}

\noindent\textit{Proof.} We can argue as in the previous proof.
Notice that in the proof of \eqref{star} and \eqref{claim}, we do
not use that $p_0 \in N$, thus they hold also for $p_0 \in
\partial N$. Then, when applying the localization Theorem, using also
Proposition \ref{compact}, we conclude that $\nabla u_m (B_{3/4})$
is included in a $\delta$-neighborhood $\mathcal{N}_{\delta}$ of
$\partial N$ if $\eps$ is small depending only on $\delta$, $F$.

Again, it suffices to prove our conclusion at 0, that is $[\nabla
u_m(0), p_0] \subset \mathcal{N}_{4\delta}.$ By changing the
system of coordinates in the $x$ and $p$ spaces, we can assume
that we have the following situation:
$$\nabla u_m(x_0) =0, \quad u_m(x_0)=0, \quad x_0=\pm e_1/4, \quad p_0=\alpha e_2$$
$$|u_m - \alpha e_2\cdot x|\leq 2\eps \quad \text{in} \
\overline{R}:= \{|x_1|\leq 1/8, |x_2|\leq 1/8\},$$ and $\nabla
u_m(\overline R) \subset \mathcal{N}_\delta$. We need to show that
$[0,p_0] \subset \mathcal{N}_{4\delta}$. We can assume that
$\alpha
>\delta$, otherwise the conclusion clearly holds since $p_0 \in \p N$. Moreover, the set $\{u_m <0\}$ has one connected component
in $\overline{R}$ included in the strip $\{|x \cdot \alpha e_2|
\leq 2\eps \}$ that intersects both sides $\{x_1=\pm1/8\}.$

In the set $\overline{R}\setminus U$ ($U$ as in the previous
proof) we compare $u_m$ with the function $w$ given by
$$w(x)=\frac{\alpha}{2}(x_2 -20x_1^2) +2\eps.$$
For small $\eps$ we get
$$u_m -w  >0 \quad \text{on} \ \{x_2=1/8, |x_1|\leq 1/8\} \cup (\{x_1=\pm 1/8\}\cap (\overline{R}\setminus U)).$$
However, $u_m - w <0$ on $\partial U \cap \{x_1=0\}$. Hence the
minimum of $u_m-w$ is negative and must occur at some point $z_0
\in R \setminus U.$ On the other hand, since $w$ is increasing in
the $x_2$ direction, we obtain as in the previous proof that the
minimum cannot occur on $R \cap
\partial U$. Thus $z_0$ is an interior point and $$\nabla u_m(z_0)
= \nabla w (z_0).$$

Moreover $|z_0| \leq \sqrt{\eps/\alpha}$. Indeed, it is
straightforward to check that if
$$x \in \{\alpha x_2 \geq -2\eps,|x| \geq
\sqrt{\eps/\alpha}\} \supset (\overline{R} \setminus U) \setminus
B_{\sqrt{\eps/\alpha}}(0),$$ then
$$u_m -w \ge \alpha x_2-w -2\eps
\ge \eps,$$ and the claim follows. Therefore, as $\eps \rightarrow
0$, $\nabla u_m(z_0) \rightarrow \alpha e_2/2.$ If $\eps$ is small
enough (depending on $\delta$ and $F$), $\nabla u_m(z_0)\in
\mathcal {B}_{\delta}(p_0/2)$ and $ \nabla u_m(z_0) \in \mathcal
{N}_\delta$ since $z_0 \in \overline {R}$. Hence $p_0/2 \in
\mathcal{N}_{2 \delta}$ which implies $[0,p_0] \subset
\mathcal{N}_{4\delta}$, and the proposition is proved.

\qed

\end{document}